\newcommand{\leqnos}{\tagsleft@true\let\veqno\@@leqno}
\newcommand{\reqnos}{\tagsleft@false\let\veqno\@@eqno}
\theoremstyle{plain}
\newtheorem{theorem}{Theorem}[section]
\newtheorem{proposition}[theorem]{Proposition}
\newtheorem{lemma}[theorem]{Lemma}
\newtheorem{example}[theorem]{Example}
\newtheorem{corollary}[theorem]{Corollary}
\theoremstyle{definition}
\newtheorem{definition}[theorem]{Definition}
\theoremstyle{remark}
\newtheorem{remark}[theorem]{Remark}
\DeclareMathOperator{\Forbb}{forb}
\DeclareMathOperator{\pa}{pa}
\DeclareMathOperator{\de}{de}
\DeclareMathOperator{\an}{an}
\DeclareMathOperator{\CN}{cn}
\newcommand{\g}[1][G]{\mathcal{#1}}
\newcommand{\f}[2][X,Y]{\Forbb(#1,#2)}
\newcommand{\cn}[2][X,Y]{\CN(#1,#2)}
\newcommand{\vsp}{\vspace{.2cm}}
\newcommand\ci{\perp\!\!\!\perp}
\newcommand\notci{\not\!\perp\!\!\!\perp}
\begin{document}

	\title{Graphical tools for selecting conditional instrumental sets}

\author[1]{Leonard Henckel}
\author[2]{Martin Buttenschön}
\author[3]{Marloes H. Maathuis}
\affil[1]{School of Mathematics and Statistics, University College Dublin}
\affil[2]{Department of Statistics, University of Oxford}
\affil[3]{Seminar for Statistics, ETH Zurich}
	\date{} 
	\maketitle

	\begin{abstract}
	We consider the efficient estimation of total causal effects in the presence of unmeasured confounding using conditional instrumental sets. Specifically, we consider the two-stage least squares estimator in the setting of a linear structural equation model with correlated errors that is compatible with a known acyclic directed mixed graph. 
	To set the stage for our results, we characterize the class of linearly valid conditional instrumental sets that yield consistent two-stage least squares estimators for the target total effect and derive a new asymptotic variance formula for these estimators. Equipped with these results, we provide three graphical tools for selecting more efficient linearly valid conditional instrumental sets. First, a graphical criterion that for certain pairs of linearly valid conditional instrumental sets identifies which of the two corresponding estimators has the smaller asymptotic variance. Second, an algorithm that greedily adds covariates that reduce the asymptotic variance to a given linearly valid conditional instrumental set. Third, a linearly valid conditional instrumental set for which the corresponding estimator has the smallest asymptotic variance that can be ensured with a graphical criterion.
	\end{abstract}

	
\section{Introduction}
		\label{introIV}
  
Suppose we want to estimate the total causal effect of an exposure $X$ on an outcome $Y$ in the presence of latent confounding. This is often done with instrumental variable estimators. A popular instrumental variable estimator is the two-stage least squares estimator which has attractive statistical properties for linear models \citep{vansteelandt2018improving}, such as being asymptotically efficient among a broad class of estimators for Gaussian linear structural equation models \citep[Chapter 5,][]{wooldridge2010econometric}.
The two-stage least squares estimator is computed based on a tuple $(Z,W)$ of covariate sets, where we call $Z$ the instrumental set, $W$ the conditioning set and $(Z,W)$ the conditional instrumental set. It is well-known that the two-stage least squares estimator is only consistent if we use certain tuples $(Z,W)$ and we call such tuples \emph{valid} conditional instrumental sets relative to $(X,Y)$. Determining which conditional instrumental sets are valid has received considerable research attention  \citep[e.g.][]{bowden1990instrumental,angrist1996identification,hernan2006instruments,wooldridge2010econometric}. In the graphical framework and for linear structural equation models, the setting we adopt in this paper, the most relevant results are by \citet{brito2002new} and \citet{pearl2009causality} who have proposed a necessary graphical criterion for a conditional instrumental set to be linearly valid, i.e., valid for linear structural equation models.

Besides validity, there is another aspect to this problem that has received less attention: the choice of the conditional instrumental set also affects the estimator's statistical efficiency. While in some cases no linearly valid conditional instrumental set may exist, in other cases multiple linearly valid tuples may be available.
In such cases, each linearly valid tuple leads to a consistent estimator of the total effect but the respective estimators may differ in terms of their asymptotic variance. This raises the following question: can we identify tuples that lead to estimators that are more efficient than others? This is an important problem, as it is well-known that the two-stage least squares estimator can suffer from low efficiency \citep{kinal1980existence}. 
		
It is known that increasing the number of instruments, i.e., enlarging $Z$, can only improve the asymptotic variance \citep[e.g.][Chapter 5]{wooldridge2010econometric}. However, this may also increase the finite sample bias \citep{bekker1994alternative}. A related result that does not suffer from this drawback is given by \citet{kuroki2004selection}. They provide a graphical criterion that can identify in some cases which of two tuples $(Z_1,W)$ and $(Z_2,W)$ provides a more efficient estimator. However, this result is limited to singletons $Z_1$ and $Z_2$ with a fixed conditioning set $W$. The selection of the conditioning set $W$ has received even less attention. The only result we are aware of is by \citet{vansteelandt2018improving}, who establish that if a set $W$ is independent of the instrumental set $Z$ then using $(Z,W)$ rather than $(Z,\emptyset)$ cannot harm the efficiency and may improve it. There are no results for pairs where both the instrumental set and the conditioning set differ.
		
There exists a similar problem in the setting without latent confounding that has received more research attention. In this setting and under a linearity assumption, it is possible to estimate the target total effect by using the ordinary least squares estimator adjusted for a valid adjustment set. There have been a series of more and more general graphical criteria to compare pairs of adjustment sets in terms of the asymptotic variance of the corresponding estimators \citep{kuroki2003covariate,kuroki2004selection,henckel2019graphical}. These advances have allowed \citet{henckel2019graphical} to graphically characterize an adjustment set guaranteed to attain the optimal asymptotic variance among all valid adjustment sets in causal linear models. \citet{rotnitzky2019efficient} show that this result also holds for a large class of non-parametric estimators \citep[see also][]{rotnitzky2022minimum,guo2022variable}. \citet{runge2021necessary} and \citet{smucler2020efficient} provide conditions under which this result generalizes to settings with latent variables, although they do assume that at least one valid adjustment set is observed.

In this paper we aim to fill the gap between the growing literature on selecting adjustment sets and the more limited literature on conditional instrumental sets. We consider the setting of a linear structural equation model with correlated errors that is compatible with a known acyclic directed mixed graph. To set the stage for our results, we first derive a necessary and sufficient graphical condition for a conditional instrumental set to be linearly valid. Moreover, we derive a new formula for the asymptotic variance of the two-stage least squares estimator. This formula only holds for linearly valid conditional instrumental sets but has a simpler dependence on the conditional instrumental set and the underlying graph compared to the traditional formula.

Equipped with these two results, we provide three graphical tools to select more efficient conditional instrumental sets. The first result is a graphical criterion that for certain pairs of linearly valid conditional instrumental sets identifies which of the two corresponding estimators has the smaller asymptotic variance. This criterion includes the results by \citet{kuroki2004selection} and \citet{vansteelandt2018improving} as special cases. It is also the first criterion that can compare pairs of linearly valid tuples where both the instrumental set and the conditioning set differ. This tool also forms the theoretical foundation for our other two tools. The second result is an algorithm that takes any linearly valid conditional instrumental set and greedily adds covariates that decrease the asymptotic variance, while preserving validity. We also use the basic principle underlying our algorithm to propose a simple, intuitive guideline for practitioners on how to select a more efficient tuple when the underlying causal structure is not fully known.
The third result is the graphical characterization of a conditional instrumental set that, given mild constraints, is linearly valid and for which the corresponding estimator has the smallest asymptotic variance we can ensure with a graphical criterion; a property we formally define and call \emph{graphical optimality}. 

Lastly, we provide a simulation study to quantify the gains that can be expected from applying our results in practice. We also illustrate our results on the settler-mortality data set by \citet{acemoglu2001colonial} in order to more efficiently estimate the effect of a country's institutions on its economic wealth. All proofs are provided in the Supplementary Material.

\section{Preliminaries}
\label{prelimIV}

We consider acyclic directed mixed graphs where nodes represent random variables, directed edges $(\rightarrow)$ represent direct effects and bi-directed edges $(\leftrightarrow)$ represent error correlations induced by latent variables. We now give the most important definitions. The remaining ones together with an illustrating example are given in Section \ref{sec:prelim:graph} of the Supplementary Material.

\vsp\noindent \emph{Notation:} We generally use $\{\cdot\}$ to denote sets but we drop the brackets around singletons to ease the notation. For the same reason we use lowercase letters in sub- and superscripts.
 
\vsp\noindent \emph{Linear structural equation model:} 
Consider an acyclic directed mixed graph $\g=(V,E)$, with nodes $V=(V_1,\dots,V_p)$ and edges $E$, where the nodes represent random variables. A random vector $V$ is said to be generated from a linear structural equation model compatible with $\g$ if 
\begin{equation}
V \leftarrow \mathcal{A} V + \epsilon, \label{sem}
\end{equation}
such that the following three properties hold: First, $\mathcal{A}=(\alpha_{ij})$ is a matrix with $\alpha_{ij}=0$ for all $i,j $ where $V_j \rightarrow V_i \notin E$. Second, $\epsilon=(\epsilon_{v_1},\dots,\epsilon_{v_p})$ is a random vector of errors such that $E(\epsilon)=0$ and $\mathrm{cov}(\epsilon)=\Omega=(\omega_{ij})$ is a matrix with $\omega_{ij}=\omega_{ji}=0$ for all $i,j$ where $V_i \leftrightarrow V_j \notin E$. Third, for any two disjoint sets $V', V'' \subseteq V$ such that for all $V_i\in V'$ and all $V_j \in V''$, $V_i \leftrightarrow V_j \notin E$, the random vector $(\epsilon_{v_i})_{v_i \in v'}$ is independent of $ (\epsilon_{v_i})_{v_i \in v''}$. 

We use the symbol $\leftarrow$ in equation \eqref{sem} to emphasize that it is interpreted as a generating mechanism rather than just an equality. As a result we can use it to identify the effect of an outside intervention that sets a treatment $V_i$ to a value $v_i$ uniformly for the entire population. Such interventions are typically called do-interventions and denoted $do(V_i=v_i)$ \citep{pearl1995causal}. The edge coefficient $\alpha_{ij}$ is also called the \emph{direct effect} of $V_j$ on $V_i$ with respect to $V$. The non-zero error covariances $\omega_{ij}$ can be interpreted as the effect of latent variables. 

 \vsp\noindent \emph{Causal paths and forbidden nodes:}  
A path from $X$ to $Y$ in $\g$ is called a causal path from $X$ to $Y$ if all edges on $p$ are directed towards $Y$. The descendants of $X$ in $\g=(V,E)$ are all nodes $D \in V$ such that there exists a causal path from $X$ to $D$ in $\g$ and the set of descendants is denoted by $\de(X,\g)$. We use the convention that $X \in \de(X,\g)$. Moreover, for a set $W=\{W_1,\dots,W_k\}$ we let $\de(W,\g)=\bigcup_{i=1}^k \de(W_i,\g)$.
The causal nodes with respect to $(X,Y)$ in $\g$ are all nodes on causal paths from $X$ to $Y$ excluding $X$ and they are denoted by $\cn{\g}$. 
We define the forbidden nodes relative to $(X,Y)$ in $\g$ as $\f{\g}=\de(\cn{\g}, \g) \cup X$.

\vsp\noindent \emph{Total effects:}
We define the total effect of $X$ on $Y$ at $X=x$ as 
	\[
	\tau_{yx}(x) = \frac{\partial}{\partial x} E\{Y\mid do(X=x)\}.
	\]
In a linear structural equation model the function $\tau_{yx}(x)$ is constant, which is why we simply write $\tau_{yx}$.
The path tracing rules by \citet{wright1934method} allow for the following alternative definition. Consider two nodes $X$ and $Y$ in an acyclic directed mixed graph $\g=(V,E)$ and suppose that $V$ is generated from a linear structural equation model compatible with $\g$. Then $\tau_{yx}$ is the sum over all causal paths from $X$ to $Y$ of the product of the edge coefficients along each such path.

\vsp\noindent \emph{Two-stage least squares estimator:} \citep{basmann1957generalized}
Consider two random variables $X$ and $Y$, and two random vectors $Z$ and $W$. Let $S_{n},T_n$ and $Y_n$ be the random matrices corresponding to $n$ i.i.d. observations from the random vectors $S=(X,W)$, $T=(Z,W)$ and $Y$, respectively. Then the two-stage least squares estimator $\hat{\tau}_{yx}^{z.w}$ is defined as the first entry of
\begin{equation}
    \hat{\gamma}_{ys.t}=Y_n T_n^\top (T_n T_n^\top)^{-1} T_n S_n^\top \{S_n T_n^\top (T_n T_n^\top)^{-1} T_n S_n^\top\}^{-1},
    \label{eq:2sls}
\end{equation}
where we omit the dependence on the sample size $n$ for simplicity and let $\gamma_{ys.t}$ denote the population level version of $\hat{\gamma}_{ys.t}$. The estimator is also commonly written as 
\begin{equation*}
    \hat{\gamma}_{ys.t}=Y_n P_{T_n} S_n^\top \{S_n P_{T_n} S_n^\top\}^{-1},
\end{equation*}
where $P_{T_n}=T_n^\top (T_n T_n^\top)^{-1} T_n$ is the symmetric and idempotent hat matrix from an ordinary least squares regression on $T_n$. This notation emphasizes that we can also obtain $\hat{\gamma}_{ys.t}$ by regressing $S_n$ on $T_n$ obtaining the fitted values $\hat{S}_n=S_n P_{T_n}$ and then regressing $Y_n$ on $\hat{S}_n$; hence the name two-stage least squares estimator.

\vsp\noindent \emph{Latent projection and m-separation:} \citep{koster1999validity,richardson2003markov} Consider an acyclic directed mixed graph $\g=(V,E)$, with $V$ generated from a linear structural equation model compatible with $\g$. We can read off conditional independence relationships between the variables in $V$ directly from $\g$ with a graphical criterion known as m-separation. A formal definition is given in the Supplementary Material. We use the notation $S \perp_{\g} T\mid W$ to denote that $S$ is m-separated from $T$ given $W$ in $\g$. We use the convention that $\emptyset \perp_{\g} T \mid W$ holds for any $T$ and $W$.

Consider a subset of nodes $L \subseteq V$. We can use a tool called the \emph{latent projection} \citep{richardson2003markov} to remove the nodes in $L$ from $\g$ while preserving all m-separation statements between subsets of $V \setminus L$. We use the notation $\g^{L}$ to denote the acyclic directed mixed graph with node set $V\setminus L$ that is the latent projection of $\g$ over $L$. A formal definition of the latent projection is given in the Supplementary Material.

\vsp\noindent \emph{Covariance matrices and regression coefficients:}
Consider random vectors $S=(S_1,\dots,S_{k_s}),T=(T_1,\dots,T_{k_t})$ and $W$.
We denote the population level covariance matrix of $S$ by $\Sigma_{ss} \in\mathbb{R}^{k_s \times k_s}$ and the covariance matrix between $S$ and $T$ by $\Sigma_{st}  \in \mathbb{R}^{k_s \times k_t}$, where its $(i,j)$-th element equals $\mathrm{cov}(S_i,T_j)$. We further define 
$\Sigma_{st.w} = \Sigma_{st} - \Sigma_{sw} \Sigma^{-1}_{ww} \Sigma_{wt}$. If $k_s=k_t=1$, we write $\sigma_{st.w}$ instead of $\Sigma_{st.w}$. The value $\sigma_{ss.w}$ can be interpreted as the residual variance of the ordinary least squares regression of $S$ on $W$. We also refer to $\sigma_{ss.w}$ as the conditional variance of $S$ given $W$.  
Let $\beta_{st.w} \in \mathbb{R}^{k_s \times k_t}$ represent the population level least squares coefficient matrix whose $(i,j)$-th element is the regression coefficient of $T_j$ in the regression of $S_i$ on $T$ and $W$. We denote the corresponding estimator by $\hat{\beta}_{st.w}$. Finally, for random vectors $W_1, \dots, W_m$ with $W=(W_1, \dots, W_m)$ we use the notation that $\beta_{st.{w_1\cdots w_m}} = \beta_{st.w}$ and $\Sigma_{st.w_1 \cdots w_m} = \Sigma_{st.w}$. 

\vsp\noindent \emph{Adjustment sets:} \citep{shpitser2010validity,perkovic16}
A node set $W$ is a valid adjustment set relative to $(X,Y)$ in $\g$, if $\beta_{yx.w}=\tau_{yx}$ for all linear structural equation models compatible with $\g$. There exists a necessary and sufficient graphical criterion for a set $W$ to be a valid adjustment set, which can be found in the Supplementary Material.

\section{Linearly valid conditional instrumental sets}
\label{sec:prep results}

In this section, we graphically characterize the class of conditional instrumental sets $(Z,W)$ such that the two-stage least squares estimator $\hat{\tau}_{yx}^{z.w}$ is consistent for the total effect $\tau_{yx}$ in linear structural equation models.

\begin{definition}
	Consider disjoint nodes $X$ and $Y$, and node sets $Z$ and $W$ in an acyclic directed mixed graph $\g$. 
	We refer to $(Z,W)$ as a linearly valid conditional instrumental set relative to $(X,Y)$ in $\g$ if the following hold: (i) 
    there exists a linear structural equation model compatible with $\g$ such that $\Sigma_{xz.w}\neq 0$ 
	and (ii) for all linear structural equation models compatible with $\g$ such that $\Sigma_{xz.w}\neq 0$, the two-stage least squares estimator $\hat{\tau}_{yx}^{z.w}$ converges in probability to $\tau_{yx}$.
	\label{definition:validCIS}
\end{definition}

Condition (i) of Definition \ref{definition:validCIS} ensures that Condition (ii) is not void. In graphical terms, it corresponds to requiring that $Z \not\perp_{\g} X \mid W$. Condition (ii) then ensures that if $\Sigma_{xz.w} \neq 0$, which can be checked with observational data, the two-stage least squares estimator $\hat{\tau}_{yx}^{z.w}$ is  consistent for the total effect $\tau_{yx}$. If $\Sigma_{xz.w}=0$, the estimator $\hat{\tau}_{yx}^{z.w}$ is generally inconsistent and has non-standard asymptotic theory \citep{staiger1997instrumental}, which is why we do not consider this case. In finite samples, a non-zero but small $\Sigma_{xz.w}$ is also problematic due to weak instrument bias \citep{bound1995problems,stock2002survey} but since we only consider the asymptotic regime this is not relevant for our results.
We now provide a graphical characterization for the class of linearly valid conditional instrumental sets.

\begin{theorem}
	Consider disjoint nodes $X$ and $Y$, and node sets $Z$ and $W$ in an acyclic directed mixed graph $\g$. 
	Then $(Z,W)$ is a linearly valid conditional instrumental set relative to $(X,Y)$ in $\g$ if and only if 
	(i) $(Z \cup W) \cap \f{\g}=\emptyset$, (ii) $Z \not\perp_{\g} X \mid W$ and (iii) $Z \perp_{\tilde{\g}} Y \mid W$,
	where the graph $\tilde{\g}$ is $\g$ with all edges out of $X$ on causal paths from $X$ to $Y$ removed.
	\label{theorem:graphical valid CIS}
\end{theorem}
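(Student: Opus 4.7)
My plan is to reduce validity of $(Z,W)$ to an algebraic identity on partial covariances and then translate that identity into the stated graphical conditions via an edge-deletion argument.

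First I would derive the closed form of the population two-stage least squares estimand. Applying a Frisch--Waugh reduction to $\gamma_{ys.t}=\Sigma_{yt}\Sigma_{tt}^{-1}\Sigma_{ts}(\Sigma_{st}\Sigma_{tt}^{-1}\Sigma_{ts})^{-1}$ with $T=(Z,W)$ and $S=(X,W)$, the coefficient on $X$ becomes
\[
\tau_{yx}^{z.w}=\frac{\Sigma_{yz.w}\Sigma_{zz.w}^{-1}\Sigma_{zx.w}}{\Sigma_{xz.w}\Sigma_{zz.w}^{-1}\Sigma_{zx.w}},
\]
well defined whenever $\Sigma_{xz.w}\neq 0$. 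Consistency of $\hat\tau_{yx}^{z.w}$ for $\tau_{yx}$ across every compatible SEM with $\Sigma_{xz.w}\neq 0$ thus reduces to $(\Sigma_{yz.w}-\tau_{yx}\Sigma_{xz.w})\Sigma_{zz.w}^{-1}\Sigma_{zx.w}=0$ for every such model, which a parameter-perturbation argument collapses to the pointwise identity $\sigma_{(y-\tau_{yx}x)z_i.w}=0$ for each $z_i\in Z$ and each compatible SEM.

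The heart of the proof is an edge-deletion identity relating the original SEM on $\g$ to the SEM on $\tilde{\g}$ obtained by setting the coefficients of the deleted edges to zero while keeping all other parameters. By Wright path-tracing, every path from an error $\epsilon_v$ to $Y$ that uses a deleted edge factors as a sub-path from $\epsilon_v$ to $X$ followed by a causal sub-path from $X$ to $Y$, and the contributions of the latter sum to $\tau_{yx}$ by definition of the total effect; hence the value of $Y$ in the modified SEM equals $Y-\tau_{yx}X$ in the original. Condition (i) is precisely what ensures the nodes in $Z\cup W$ are unaffected by the deletion, since none lies in $\f{\g}$ and in particular none is a descendant of a causal node. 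Combining these observations, $\sigma_{(y-\tau_{yx}x)z_i.w}$ in the original SEM coincides with $\sigma_{yz_i.w}$ computed in the SEM on $\tilde{\g}$. Condition (iii), together with the global Markov property for linear SEMs (conditional independence implies zero partial covariance), forces this covariance to vanish, while condition (ii) supplies the existence clause of Definition \ref{definition:validCIS}; together these give the "if" direction.

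For the "only if" direction I would argue by contrapositive. Failure of (ii) makes $\Sigma_{xz.w}$ identically zero, violating the definition. Failure of (iii) yields, by a standard faithfulness argument, SEM parameters on $\tilde{\g}$ producing $\sigma_{yz_i.w}\neq 0$ for some $z_i$; extending these to $\g$ with generic coefficients on the deleted edges gives a compatible SEM that breaks the algebraic identity while keeping $\Sigma_{xz.w}\neq 0$. Failure of (i) requires an explicit construction, parameterising $\g$ so that a forbidden node included in $Z\cup W$ opens a non-blockable pathway from $Z$ to $Y$ in $\tilde{\g}$, again breaking the identity. The main obstacle is the edge-deletion identity: both the path-counting step $\tilde Y=Y-\tau_{yx}X$ and the invariance of $Z\cup W$ under the deletion must be argued carefully via path bookkeeping, and this is the only place where condition (i) is genuinely used; the necessity construction for (i) is the other somewhat delicate piece, while the remaining necessity steps follow from standard faithfulness and genericity.
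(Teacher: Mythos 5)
Your sufficiency argument is essentially the paper's, reached by a slightly cleaner route. The paper gets to the key identity $\Sigma_{\tilde{y}z.w}=0$ by first latent-projecting out the forbidden nodes and then deleting the single edge $X\rightarrow Y$ (Proposition~\ref{prop:non forb}, Lemma~\ref{lemma:forb + trunc} and Lemma~\ref{lemma:Martin}), whereas your direct path-tracing computation $\tau_{yv}-\tau'_{yv}=\tau_{xv}\,\tau_{yx}$ on the full graph exhibits $Y-\tau_{yx}X$ as the value of $Y$ in the zeroed-coefficient model compatible with $\tilde{\g}$ in one step, with condition (i) guaranteeing that $X$, $Z$ and $W$ keep their values. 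That identity is correct (in an acyclic graph a directed $v$-to-$X$ path and a directed $X$-to-$Y$ path cannot meet except at $X$, so the factorization of the path sum is exact), and the Markov/faithfulness treatment of (iii) and the contrapositive for (ii) match the paper.

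The serious gap is the necessity of condition (i), which you defer to ``an explicit construction.'' In the paper this is Lemma~\ref{lemma: no forb}, by far the longest and most delicate piece of the whole proof: it rewrites $\beta_{\tilde{y}z.w}$ as $\beta_{yz.xw}+(\beta_{yx.zw}-\tau_{yx})\beta_{xz.w}$ via an augmented graph, performs path surgery to pass to a subgraph in which $X$ has at most one edge into it and every collider on a chosen $Z$--$X$ path reaches $W$ by a causal path meeting the $Z$--$X$ path only at that collider, and then splits into cases — one resolved by a limiting argument sending the coefficients of all edges at $X$ to zero, the other by invoking the completeness of the adjustment criterion together with a polynomial/measure-zero argument. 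None of this is routine, and your stated mechanism (``a forbidden node opens a non-blockable pathway from $Z$ to $Y$ in $\tilde{\g}$'') is not the right picture: when the forbidden node sits in $W$, conditions (ii) and (iii) can formally survive, and what actually breaks is your edge-deletion identity itself, because $W$ changes value under the deletion. The counterexample therefore has to be manufactured from the algebraic expression for $\beta_{\tilde{y}z.w}$, not from an m-connection statement.

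A second, smaller gap concerns the overidentified case $|Z|>1$. Your Frisch--Waugh reduction leaves the scalar condition $\Sigma_{\tilde{y}z.w}\Sigma_{zz.w}^{-1}\Sigma_{zx.w}=0$, and the passage between this contraction and the componentwise statements $\sigma_{\tilde{y}z_i.w}=0$ — needed in both directions of your argument — is not delivered by an unspecified ``parameter-perturbation'': you must rule out that the inner product vanishes identically over the model class while individual components do not. The paper avoids this in the sufficiency direction by verifying the full vector of moment conditions $\Sigma_{\epsilon t}=0$ (which is sufficient, not necessary, for the limit), and is itself rather terse on the corresponding step of the converse; in your write-up this step needs an actual argument rather than a gesture.
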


The graphical criterion in Theorem \ref{theorem:graphical valid CIS} is similar to the well-known graphical criterion from \citet{pearl2009causality} \citep[see also][]{brito2002generalized,brito2002new}. In fact the two are equivalent for any triple $(X,Y,\g)$ with no causal path from $X$ to $Y$ in $\g$ except for the edge $X \rightarrow Y$. The main contribution of Theorem \ref{theorem:graphical valid CIS} is that it gives a necessary and sufficient criterion for general graphs.

By Condition (i) of Theorem \ref{theorem:graphical valid CIS}, a linearly valid conditional instrumental set $(Z,W)$ may not contain nodes in $\f{\g}$. We can therefore use the latent projection and remove the nodes in $F=\f{\g}\setminus\{X,Y\}$ from $\g$ to obtain the smaller graph $\g^{F}$, without loosing any relevant information, i.e., we treat them as unobserved even if we did in fact observe them. This is an example of the forbidden projection originally proposed by \citet{witte2020efficient} in the context of adjustment sets. We formalize this result in the following proposition.

\begin{proposition}
    Consider nodes $X$ and $Y$ in an acyclic directed mixed graph $\g$ and let $F=\f{\g}\setminus\{X,Y\}$. Then $(Z,W)$ is a linearly valid conditional instrumental set relative to $(X,Y)$ in $\g$ if and only if it is a linearly valid conditional instrumental set relative to $(X,Y)$ in $\g^{F}$.
	\label{prop:non forb}
\end{proposition}

 We now provide an additional proposition regarding the remaining descendants of $X$.

\begin{figure}[t]
	\centering
	\subfloat[\label{subfig:weird forb}]{
		\begin{tikzpicture}[>=stealth',shorten >=1pt,auto,node distance=0.7cm,scale=.9, transform shape,align=center,minimum size=3em]
		\node[state] (x) at (0,0) {$X$};
		\node[state] (e) [right =of x] {$M$};
		\node[state] (y) [right =of e] {$Y$};
		\node[state] (d) at ($(x)+(-1.5,-.75)$) {$D$};  
		\node[state] (a) [left =of d] {$A$};  
		\node[state] (b) at ($(x)+(-1.5,+.75)$)   {$B$};  
		\node[state] (c) at ($(y)+(0,1.5)$)   {$C$};  
		
		\path[->]   (x) edge    (e);
		\path[->]   (e) edge    (y);
		\path[<->]   (x) edge  [bend left]  (y);
		\path[->]   (x) edge  [bend right]  (y);
		\path[<->]   (d) edge     (x);
		\path[->]   (a) edge     (d);
		\path[->]   (b) edge     (x);
		\path[->]   (b) edge     (d);
		\path[->]   (c) edge     (b);
		\path[->]   (c) edge     (y);
		\end{tikzpicture}
	} 
	\hspace{1cm}
	\subfloat[\label{subfig:weird}]{
		\begin{tikzpicture}[>=stealth',shorten >=1pt,auto,node distance=0.7cm,scale=.9, transform shape,align=center,minimum size=3em]
		\node[state] (x) at (0,0) {$X$};
		\node[state] (y) [right =of x] {$Y$};
		\node[state] (d) at ($(x)+(-1.5,-.75)$) {$D$};  
		\node[state] (a) [left =of d] {$A$};  
		\node[state] (b) at ($(x)+(-1.5,+.75)$)   {$B$};  
		\node[state] (c) at ($(y)+(0,1.5)$)   {$C$};  
		\path[->]   (x) edge    (y);
		\path[<->]   (x) edge  [bend left]  (y);
		\path[<->]   (d) edge     (x);
		\path[->]   (a) edge     (d);
		\path[->]   (b) edge     (x);
		\path[->]   (b) edge     (d);
		\path[->]   (c) edge     (b);
		\path[->]   (c) edge     (y);
		\end{tikzpicture}
	} 
	\caption{\protect\subref{subfig:weird forb} Acyclic directed mixed graph for Example \ref{ex:valid CIS} and \protect\subref{subfig:weird} Acyclic directed mixed graph for Examples \ref{ex:valid CIS}, \ref{ex:ambigous}, \ref{ex:weird}, \ref{ex:algorithm} and \ref{ex:optimal cis}.}
	\label{fig:weird}
\end{figure}

\begin{proposition}
	Consider nodes $X$ and $Y$ in an acyclic directed mixed graph $\g$ and let $(Z,W)$ be a linearly valid conditional instrumental set relative to $(X,Y)$ in $\g$. If $(Z \cup W) \cap \de(X,\g) \neq \emptyset$, then $W$ is a valid adjustment set relative to $(X,Y)$ in $\g$. 
	\label{prop:descX}
\end{proposition}

Proposition \ref{prop:descX} implies that whenever we may use descendants of $X$ in the conditional instrumental set, we can also use covariate adjustment to estimate $\tau_{yx}$, that is, use the ordinary least squares estimator. As the latter is known to be a more efficient estimator than the two-stage least squares estimator \cite[e.g. Chapter 5.2.3 of][ Section \ref{sec:asy formula app} of the Supplementary Material]{wooldridge2010econometric}, we disregard such cases and apply the latent projection to also marginalize out any remaining variables in $\de(X,\g)\setminus\{X,Y\}$. We also assume that $Y \in \de(X,\g)$ as otherwise $\tau_{yx}=0$ by default. We summarize our assumptions in a remark at the end of this section.

\begin{example}[Linearly valid conditional instrumental sets]\label{ex:valid CIS}
We now characterize all linearly valid conditional instrumental sets $(Z,W)$ relative to $(X,Y)$ in the graph $\g$ from Figure \ref{subfig:weird forb} using Theorem \ref{theorem:graphical valid CIS}. First, $\cn{\g} = \{M,Y\}$ and $\f{\g} = \{X,M,Y\}$. By Condition (i), we therefore only need to consider sets $Z$ and $W$ that are subsets of $\{A,B,C,D\}$. 

We first consider potential instrumental sets $Z$. Since $C \not\perp_{\tilde{\g}} Y \mid  W$ for all $W \subseteq \{A,B,D\}$, no $Z$ may contain $C$ or we have a violation of Condition (iii). This means we have seven candidates for $Z:  A,B,D,\{A,B\},\{A,D\},\{B,D\}$ and $\{A,B,D\}$. We now consider the potential conditioning sets corresponding to each of these seven candidates.

Consider first the four candidate instrumental sets $B,\{A,B\},\{B,D\}$ and $\{A,B,D\}$. Letting $Z$ be any of these sets, it holds that $Z \not\perp_{\tilde{\g}} X \mid  W$ for all $W \subseteq \{A,B,C,D\} \setminus Z$. Therefore Condition (ii) holds irrespective of $W$. Further, $Z \perp_{\tilde{\g}} Y \mid  W$ with $W \subseteq \{A,B,C,D\}\setminus Z$ if and only if $C \in W$. Therefore, Condition (iii) holds and $(Z,W)$ is a linearly valid conditional instrumental set if an only if $C \in W$. The conditioning sets for $B$ are therefore $C,\{A,C\},\{C,D\}$ and $\{A,C,D\}$, for $\{A,B\}$ they are $C$ and $\{C,D\}$, for $\{B,D\}$ they are $C$ and $\{A,C\}$, and for $\{A,B,D\}$ there is only $C$.

Consider now the two candidate instrumental sets $D$ and $\{A,D\}$. Letting $Z$ be either of these two sets, it holds that $Z \not\perp_{\tilde{\g}} X \mid  W$ for all $W \subseteq \{A,B,C,D\} \setminus Z$ and therefore Condition (ii) holds irrespective of $W$. Further, $Z \perp_{\tilde{\g}} Y \mid  W$ with $W \subseteq \{A,B,C,D\}\setminus Z$ if and only if $B \in W$ or $C \in W$. Therefore, Condition (iii) holds and $(Z,W)$ is a linearly valid conditional instrumental set if and only if $B \in W$ or $C \in W$. The conditioning sets for $D$ are therefore $B,C,\{A,B\},\{A,C\},\{B,C\}$ and $\{A,B,C\}$, while for $\{A,D\}$ they are $B,C$ and $\{B,C\}$.

Finally consider the case $Z=A$. It holds that $A \not\perp_{\g} X \mid  W$ for $W \subseteq \{A,B,C,D\}\setminus Z$ if and only if $D \in W$ and therefore Condition (ii) holds if and only if $D \in W$. Further, given any $W \subseteq \{A,B,C,D\}$ with $D \in W$, $A \perp_{\tilde{\g}} Y \mid  W$ if and only if $B \in W$ or $C \in W$. The conditioning sets for $Z=A$ are therefore $\{B,D\},\{C,D\}$ and $\{B,C,D\}$. In total there are therefore $21$ linearly valid conditional instrumental sets (see Table \ref{table:valid cis} in the Supplementary Material).

Consider now the graph $\g'$ from Figure \ref{subfig:weird}. It is the forbidden projection graph $\g^{F}$ with $F=M$. It is easy to verify that the arguments we gave for $\g$ also apply to $\g'$ and therefore every linearly valid conditional instrumental set relative to $(X,Y)$ in $\g$ is also a linearly valid conditional instrumental set relative to $(X,Y)$ in $\g'$ and vice versa.

\end{example}

\begin{remark}
  For the remainder of this paper we consider graphs $\g$ such that $ \de(X,\g) =\{X,Y\}$. The graph $\tilde{\g}$ then simply equals the graph $\g$ with the edge $X \rightarrow Y$ removed.
\end{remark}

\section{Efficient and linearly valid conditional instrumental sets}
\label{mainIV}

\subsection{Asymptotic variance formula for linearly valid conditional instrumental sets}
\label{subsec:asymptotic variance}

We present a new formula for the asymptotic variance of the two-stage least squares estimator, in terms of three conditional variances, that is, terms of the form $\sigma_{xx.z}$ as defined in Section \ref{prelimIV}. The new formula has a simpler dependence on the tuple $(Z,W)$ than the traditional formula (see Equation \eqref{2SLSavar_old} below), but it only holds if the tuple $(Z,W)$ is a linearly valid conditional instrumental set. 

\begin{theorem}
    \label{theorem:varequ}
	Consider nodes $X$ and $Y$ in an acyclic directed mixed graph $\g$ such that $\de(X,\g)=\{X,Y\}$. 
	Let $(Z,W)$ be a linearly valid conditional instrumental set relative to $(X,Y)$ in $\g$ and $\tilde{Y}=Y-\tau_{yx}X$. Then for all linear structural equation models compatible with $\g$ such that $\Sigma_{xz.w}\neq 0$, $\hat{\tau}_{yx}^{z.w}$ is an asymptotically normal estimator of $\tau_{yx}$ with asymptotic variance
	\begin{equation}\label{2SLSavar}
	a.var(\hat{\tau}_{yx}^{z.w}) = \frac{\sigma_{\tilde{y}\tilde{y}.w}}{\sigma_{xx.w}-\sigma_{xx.zw}}. 
	\end{equation}
\end{theorem}

The new formula in Equation \eqref{2SLSavar} differs in two ways from the traditional asymptotic variance formula which is
\begin{equation}\label{2SLSavar_old}
a.var(\hat{\tau}_{yx}^{z.w}) = \left(\eta_{ys.t} (\Sigma_{st}\Sigma^{-1}_{tt} \Sigma_{st}^\top)^{-1}\right)_{1,1},
\end{equation}
where $S=(X,W), T=(Z,W)$ and $\eta_{ys.t}=\mathrm{var}(Y-\gamma_{ys.t} S)$ is the population level residual variance of the estimator $\hat{\gamma}_{ys.t}$ defined in Section \ref{prelimIV}. The first difference is that in Equation \eqref{2SLSavar} the residual variance $\eta_{ys.t}$ from Equation \eqref{2SLSavar_old} is replaced by the conditional variance $\sigma_{\tilde{y}\tilde{y}.w}$ of the oracle random variable $\tilde{Y}=Y-\tau_{yx}X$ on $W$. This is possible because under the assumption that $(Z,W)$ is a linearly valid conditional instrumental set, we have $\eta_{ys.t}=\sigma_{\tilde{y}\tilde{y}.w}$. The advantage of this change is that it is easier to describe how the term $\sigma_{\tilde{y}\tilde{y}.w}$ behaves as a function of the tuple $(Z,W)$ than $\eta_{ys.t}$. For example, it is immediately clear that $\sigma_{\tilde{y}\tilde{y}.w}$ depends on $(Z,W)$ only via $W$. We refer to the numerator of Equation $\eqref{2SLSavar}$, $\sigma_{\tilde{y}\tilde{y}.w}$, as the residual variance of the estimator $\hat{\tau}_{yx}^{z.w}$ or, when $(X,Y)$ is clear, of the tuple $(Z,W)$. 

The second change is that in Equation \eqref{2SLSavar} the denominator from Equation \eqref{2SLSavar_old} is replaced with the difference $\sigma_{xx.w}-\sigma_{xx.zw}$. The difference $\sigma_{xx.w}-\sigma_{xx.zw}$ measures how much the residual variance of $X$ on $W$ decreases when we add $Z$ to the conditioning set. Intuitively, this is the information on $X$ that $Z$ contains and which was not already contained in $W$. Based on this intuition, we refer to the denominator of Equation \eqref{2SLSavar}, $\sigma_{xx.w}-\sigma_{xx.zw}$, as the conditional instrumental strength of the estimator $\hat{\tau}_{yx}^{z.w}$ or, when $(X,Y)$ is clear, of $(Z,W)$. 

Another important contribution of Theorem \ref{theorem:varequ} is that it also holds for linear structural equation models with non-Gaussian errors. This is a non-trivial result, because Equation \eqref{2SLSavar_old} is usually derived under a homoscedasticity assumption on the residuals $Y-\gamma_{ys.t} S$, which generally does not hold if the errors in the underlying linear structural equation model are non-Gaussian. However, in our proof we show that if $(Z,W)$ is a \emph{linearly valid} conditional instrumental set, Equations \eqref{2SLSavar} and \eqref{2SLSavar_old} hold, even if the residuals are heteroskedastic. 

Finally, the new asymptotic variance formula elegantly mirrors the ordinary least squares asymptotic variance formula $a.var(\hat{\beta}_{yx.w})=\sigma_{yy.xw}/\sigma_{xx.w}$. We can use this to illustrate how the two-stage least squares estimator is less efficient than the ordinary least squares estimator. We discuss this further in Section \ref{sec:asy formula app} of the Supplementary Material.

\subsection{Comparing pairs of linearly valid conditional instrumental sets}
\label{subsec:efficiency}

We derive a graphical criterion that, for certain pairs of linearly valid conditional instrumental sets, identifies which of the two corresponding estimators has the smaller asymptotic variance.

\begin{theorem}
	Consider nodes $X$ and $Y$ in an acyclic directed mixed graph $\g$ such that $\de(X,\g)=\{X,Y\}$. Let $(Z_1,W_1)$ and $(Z_2,W_2)$ be linearly valid conditional instrumental sets relative to $(X,Y)$ in $\g$. Let $W_{1\setminus 2}=W_1 \setminus W_2$ and $W_{2\setminus 1}=W_2 \setminus W_1$. If 
	\begingroup\leqnos
	\begin{align}
	    & W_{1\setminus 2} \perp_{\tilde{\g}} Y \mid W_2, \tag{a} \label{condition1}\\
	    & \text{(i) } W_{1\setminus 2} \perp_{\g} Z_2 \mid W_2 \text{ or (ii) }W_{1\setminus 2} \setminus Z_2 \perp_{\g} X \mid Z_2 \cup W_2, \tag{b} \label{condition2}\\
	    & \text{(i) } W_{2\setminus 1} \setminus Z_1 \perp_{\g} Z_1\mid W_1 \text{ and } W_{2\setminus 1} \cap Z_1 \perp_{\g} X \mid W_1 \cup (W_{2\setminus 1}\setminus Z_1) \text{ or} \tag{c} \label{condition3}\\ 
	    & \quad \text{(ii) } W_{2\setminus 1} \perp_{\g} X \mid  W_1, \nonumber\\
	    & Z_1 \setminus (Z_2 \cup W_{2\setminus 1}) \perp_{\g} X \mid Z_2 \cup W_1 \cup W_{2\setminus 1}, \tag{d} \label{condition4}
	\end{align}\endgroup
	then for all linear structural equation models compatible with $\g$ such that $\Sigma_{xz_1.w_1}\neq 0$, 
	$
	a.var(\hat{\tau}_{yx}^{z_2.w_2}) \leq a.var(\hat{\tau}_{yx}^{z_1.w_1}).
    $
	\label{theorem:CIVcomparison}
\end{theorem}

Theorem \ref{theorem:CIVcomparison} is more general in terms of the linearly valid conditional instrumental sets it can compare, than the results by \cite{kuroki2004selection} and \citet{vansteelandt2018improving}. In particular, it can compare tuples $(Z_1,W_1)$ and $(Z_2,W_2)$ with $Z_1 \neq Z_2$ and $W_1 \neq W_2$. 

As a result, the graphical condition of Theorem \ref{theorem:CIVcomparison} is, however, rather complex. For easier intuition, we can think of it as consisting of two separate parts. Condition \eqref{condition1} verifies that $(Z_1,W_1)$ may not provide a smaller residual variance than $(Z_2,W_2)$, that is, $\sigma_{\tilde{y}\tilde{y}.w_1} \geq \sigma_{\tilde{y}\tilde{y}.w_2}$. It does so by checking that the covariates in $W_{1\setminus 2}$ do not reduce the residual variance of the oracle random variable $\tilde{Y}=Y-\tau_{yx}X$. Because $\tilde{Y}$ is not a node in $\g$ but can be thought of as the node $Y$ in the graph $\tilde{\g}$, this requires checking an m-separation statement in $\tilde{\g}$.
Conditions (\ref{condition2}--\ref{condition4}) jointly verify that $(Z_1,W_1)$ may not provide a larger conditional instrumental strength than $(Z_2,W_2)$, that is, $\sigma_{xx.w_1} -\sigma_{xx.z_1w_1} \leq \sigma_{xx.w_2} -\sigma_{xx.z_2w_2}$. This requires three conditions because we need to verify in turn that (i) the covariates in $W_{1\setminus 2}$ do not increase, (ii) the nodes in $W_{2\setminus 1}$ do not decrease and (iii) the nodes in $Z_1 \setminus (Z_2 \cup W_{2\setminus 1})$ do not increase the conditional instrumental strength.

Theorem \ref{theorem:CIVcomparison} also simplifies whenever $W_1 \subseteq W_2$, $W_2 \subseteq W_1$ or $Z_1 \subseteq Z_2$. In the first case, $W_{1\setminus 2} = \emptyset$ and therefore Conditions \eqref{condition1} and \eqref{condition2} are void. In the second case, $W_{2\setminus 1} = \emptyset$ and therefore Condition \eqref{condition3} is void. In the third case, $Z_1\setminus Z_2 =\emptyset$ and therefore Condition \eqref{condition4} is void. If $W_1=W_2$ and $Z_1 \subseteq Z_2$ all four conditions are void. Therefore, the well-known result that adding covariates to the instrumental set is beneficial for the asymptotic variance \citep[e.g. Chapter 12.17 of][]{hansen2019} is a corollary of Theorem \ref{theorem:CIVcomparison}. We formally state it for completeness.

\begin{corollary}
	\label{corollary:more instruments}
	Consider nodes $X$ and $Y$ in an acyclic directed mixed graph $\g$ such that $\de(X,\g)=\{X,Y\}$. Let $(Z_1,W_1)$ and $(Z_2,W_2)$ be linearly valid conditional instrumental sets relative to $(X,Y)$ in $\g$. If $Z_1 \subseteq Z_2$ and $W_1=W_2=W$, then for all linear structural equation models compatible with $\g$ such that $\Sigma_{xz_1.w}\neq 0$, $a.var(\hat{\tau}_{yx}^{z_2.w}) \leq 	a.var(\hat{\tau}_{yx}^{z_1.w}).$
\end{corollary}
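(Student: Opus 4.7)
The plan is to derive this corollary as an immediate consequence of Theorem \ref{theorem:CIVcomparison}, following the simplification hints already noted in the paragraph preceding the corollary. With $W_1 = W_2 = W$, we have $W_{1 \setminus 2} = W_{2 \setminus 1} = \emptyset$, and with $Z_1 \subseteq Z_2$, we have $Z_1 \setminus Z_2 = \emptyset$ and hence $Z_1 \setminus (Z_2 \cup W_{2\setminus 1}) = \emptyset$. I will plug these into the four conditions of Theorem \ref{theorem:CIVcomparison} one by one: condition \eqref{condition1} becomes $\emptyset \perp_{\tilde{\g}} Y \mid W$, condition \eqref{condition2} becomes (under option i) $\emptyset \perp_{\g} Z_2 \mid W$, condition \eqref{condition3} becomes (under option ii) $\emptyset \perp_{\g} X \mid W$, and condition \eqref{condition4} becomes $\emptyset \perp_{\g} X \mid Z_2 \cup W$. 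By the convention introduced in the preliminaries that any m-separation statement with an empty first argument holds, all four conditions are satisfied, and the asymptotic variance inequality follows.

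As a sanity check, I will also sketch an alternative direct argument via Theorem \ref{theorem:varequ}, which makes transparent why the result holds. Since $W_1 = W_2 = W$, the residual variance numerator $\sigma_{\tilde{y}\tilde{y}.w}$ is identical for both estimators, so the comparison reduces to showing that the conditional instrumental strength satisfies
\[
\sigma_{xx.w} - \sigma_{xx.z_1 w} \;\leq\; \sigma_{xx.w} - \sigma_{xx.z_2 w},
\]
equivalently $\sigma_{xx.z_2 w} \leq \sigma_{xx.z_1 w}$. This is the elementary fact that enlarging the regressor set in a population ordinary least squares regression cannot increase the residual variance, applied to the regression of $X$ on $(Z_i, W)$ with $Z_1 \subseteq Z_2$.

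The main (and only mild) obstacle is to make sure I am reading the two disjunctive conditions \eqref{condition2} and \eqref{condition3} correctly: each allows the verifier to choose either branch, and here the empty-set reduction makes both branches vacuous, so no subtlety arises. Denominator positivity is also guaranteed by the hypothesis $\Sigma_{xz_1.w} \neq 0$, which together with $Z_1 \subseteq Z_2$ implies $\Sigma_{xz_2.w} \neq 0$ as well, so Theorem \ref{theorem:varequ} applies to both estimators and the conclusion follows.
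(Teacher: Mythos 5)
Your proposal is correct and takes essentially the same route as the paper, which derives the corollary by observing that with $W_1=W_2$ and $Z_1\subseteq Z_2$ all four conditions of Theorem \ref{theorem:CIVcomparison} reduce to m-separation statements with an empty first argument and hence hold by convention. Your supplementary direct argument via Theorem \ref{theorem:varequ} is also sound and matches the intuition the paper gives, but it is not needed once the main theorem applies.
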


Theorem \ref{theorem:CIVcomparison} also gives interesting new insights. We first discuss the special case with $Z$ fixed and only $W$ varying. Here, the most important insight is that there exists a class of covariates that practitioners should avoid adding to $W$ because they increase the asymptotic variance. 
We now illustrate this class along with other consequences in a series of examples.

\begin{example}[Harmful conditioning]\label{ex:harmful}
    Consider the graph $\g$ from Fig. \ref{subfig:simple}. We are interested in estimating $\tau_{yx}$ with conditional instrumental sets of the form $(D,W)$, where $W \subseteq \{A,B,C\}$. Since $\f{\g}=\{X,Y\}$, $D \not\perp_{\g} X \mid W$ and $D \perp_{\tilde{\g}} Y \mid W$ for all $W \subseteq \{A,B,C\}$, all tuples of this form are linearly valid conditional instrumental sets relative to $(X,Y)$ in $\g$. 
    
    Let $W \subseteq \{B,C\}$ and $W' = W \cup A$. As $A \perp_{\tilde{\g}} Y \mid W$ and $A \perp_{\g} X \mid W \cup D$,
    we can apply Theorem \ref{2SLSavar} with $W_1=W',W_2=W$ and $Z_1=Z_2=D$ (Conditions \eqref{condition1} and \eqref{condition2} (ii) hold, Conditions \eqref{condition3} and \eqref{condition4} are void). We can therefore conclude that adding $A$ to any $W \subseteq \{B,C\}$ can only increase the asymptotic variance. 
    
    Conditioning on $A$ is harmful, because $A \ci \tilde{Y}\mid W$ and $A \ci X\mid D \cup  W$ but $A \notci X \mid W$. Therefore, $\sigma_{\tilde{y}\tilde{y}.wa}=\sigma_{\tilde{y}\tilde{y}.w}$ and $\sigma_{xx.wad}=\sigma_{xx.wd}$ but $\sigma_{xx.wa} \leq \sigma_{xx.w}$. 
    The node $A$ is representative of a larger class of covariates that we should avoid conditioning on because they do not affect the residual variance and may reduce the conditional instrumental strength.
\end{example}

\begin{figure}[t]
	\centering
	\subfloat[\label{subfig:simple}]{
		\begin{tikzpicture}[>=stealth',shorten >=1pt,auto,node distance=0.7cm,scale=.9, transform shape,align=center,minimum size=3em]
		\node[state] (x) at (0,0) {$X$};
		\node[state] (y) [right =of x] {$Y$};
		\node[state] (w1) at ($(x)+(-1.5,-.75)$) {$D$};  
		\node[state] (w2) at ($(y)+(0,+1.5)$)   {$C$};  
		\node[state] (w3) at ($(x)+(-1.5,+.75)$)   {$B$};  
		\node[state] (w4) [left =of w1]   {$A$};  
		\path[->]   (x) edge    (y);
		\path[<->]   (x) edge  [bend left]  (y);
		\path[->]   (w1) edge     (x);
		\path[->]   (w2) edge     (y);
		\path[->]   (w3) edge     (x);
		\path[->]   (w4) edge     (w1);
		\end{tikzpicture}
	} 
	\hspace{1cm}
	\subfloat[\label{subfig:no optimal}]{
		\begin{tikzpicture}[>=stealth',shorten >=1pt,auto,node distance=0.7cm,scale=.9, transform shape,align=center,minimum size=3em]
	\node[state] (x) at (0,0) {$X$};
	\node[state] (y) [right =of x] {$Y$};
	\node[state] (w1) [above =of x] {$B$}; 
	\node[state] (z) [left = of x] {$A$};   
	\node[state] (w2)  [above =of y]   {$C$};  
	
	\path[->]   (x) edge    (y);
	\path[<->]   (x) edge  [bend left]  (y);
	\path[->]   (z) edge     (x);
	\path[->]   (z) edge     (w1);
	\path[->]   (w1) edge     (w2);
	\path[<->]   (w2) edge     (y);
	
	\end{tikzpicture}
	} 
	\caption{\protect\subref{subfig:simple} Acyclic directed mixed graph for Examples  \ref{ex:harmful} and \ref{ex:beneficial}. \protect\subref{subfig:no optimal} Acyclic directed mixed graph for Examples \ref{ex:no optimal} and \ref{ex:optimal cis}.}
	\label{fig:bad neutral good}
\end{figure}

\begin{example}[Beneficial conditioning]\label{ex:beneficial}
    Consider the graph $\g$ from Fig. \ref{subfig:simple} and any two tuples of the form 
    $(D,W)$ and $(D,W')$, where $W \subseteq \{A,B\}$ and $W'=W\cup C$. By Example \ref{ex:harmful}, any such tuple is a linearly valid conditional instrumental set relative to to $(X,Y)$ in $\g$. Further, as $C \perp_{\g} D \mid W$ we can apply Theorem \ref{2SLSavar} with $W_1=W,W_2=W'$ and $Z_1=Z_2=D$  (Condition \eqref{condition3} (i) holds, Conditions \eqref{condition1}, \eqref{condition2} and \eqref{condition4} are void). We can therefore conclude that adding $C$ to any $W \subseteq \{A,B\}$ can only decrease the asymptotic variance.
    
    Conditioning on $C$ is beneficial, because $C \notci \tilde{Y} \mid W$ and $C \ci D \mid W$. Therefore, $\sigma_{\tilde{y}\tilde{y}.wc} \leq \sigma_{\tilde{y}\tilde{y}.w}$ but $\sigma_{xx.w}-\sigma_{xx.wd}=\sigma_{xx.wc}-\sigma_{xx.wcd}$ (see Lemma \ref{lemma:equalinstruments} in the Supplementary Material). Interestingly, this remains true if we add the edge $C \rightarrow X$ to $\g$, making $C$ a confounder. The covariate $C$ is representative of a larger class of covariates that we should aim to condition on because they may reduce the residual variance and do not affect the conditional instrumental strength. 
    This class, in particular includes confounders between $X$ and $Y$ that are independent of $Z$.
\end{example}

\begin{example}[Ambiguous conditioning]\label{ex:ambigous}
    Consider the graph $\g$ from Fig. \ref{subfig:weird}. Consider the two tuples $(D,C)$ and $(D,\{B,C\})$.  By Example \ref{ex:valid CIS}, both tuples are linearly valid conditional instrumental sets relative to $(X,Y)$ in $\g$. Further, as $B \not\perp_{\g} D \mid C$ and $B \not\perp_{\g} X \mid \{C,D\}$ we cannot apply Theorem \ref{2SLSavar} with $W_1=C,W_2=\{B,C\}$ and $Z_1=Z_2=D$ (Condition \eqref{condition3} does not hold). For the same reason, we cannot apply Theorem \ref{2SLSavar} with $W_1=\{B,C\},W_2=C$ and $Z_1=Z_2=D$ (Condition \eqref{condition2} does not hold). We can therefore not use Theorem \ref{theorem:CIVcomparison} to determine the effect of adding $B$ to the conditioning set on the asymptotic variance. 
    
\end{example}

There are two additional interesting classes of covariates, one neutral and one beneficial, that we discuss in Section \ref{sec: ex+} of the Supplementary Material.

Since Theorem \ref{theorem:CIVcomparison} can compare tuples $(Z_1,W_1)$ and $(Z_2,W_2)$ with $Z_1 \neq Z_2$ and $W_1 \neq W_2$, it also gives interesting new insights regarding covariates that we may add both to $Z$ and $W$. The following corollary is an example of such a result. 

\begin{corollary}
	\label{corollary:always instrument}
	Consider nodes $X$ and $Y$ in an acyclic directed mixed graph $\g$ such that $\de(X,\g) = \{X,Y\}$. Let $(Z,W\cup S)$ be a linearly valid conditional instrumental set relative to $(X,Y)$ in $\g$. If $(Z \cup S,W)$ is a linearly valid conditional instrumental set relative to $(X,Y)$ in $\g$, then for all linear structural equation models compatible with $\g$ such that $\Sigma_{xz.ws}\neq 0$, it holds that
	$a.var(\hat{\tau}_{yx}^{zs.w}) \leq a.var(\hat{\tau}_{yx}^{z.ws}).$
\end{corollary}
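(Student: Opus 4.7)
The plan is to apply Theorem \ref{theorem:CIVcomparison} with $(Z_1,W_1) = (Z, W \cup S)$ and $(Z_2,W_2) = (Z \cup S, W)$. First I would observe that the validity of both conditional instrumental sets forces $Z$, $W$, and $S$ to be pairwise disjoint, since a valid tuple $(Z',W')$ has $Z' \cap W' = \emptyset$. Consequently, the relevant set differences collapse: $W_{1\setminus 2} = (W\cup S)\setminus W = S$, $W_{2\setminus 1} = W\setminus(W\cup S) = \emptyset$, $Z_1 \setminus (Z_2 \cup W_{2\setminus 1}) = Z \setminus (Z\cup S) = \emptyset$, and $W_{1\setminus 2}\setminus Z_2 = S\setminus(Z\cup S) = \emptyset$.

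With these identifications, three of the four conditions of Theorem \ref{theorem:CIVcomparison} are automatic. Condition \eqref{condition3} is vacuous since $W_{2\setminus 1} = \emptyset$ makes both of its branches trivially true by the convention that the empty set is m-separated from anything. Condition \eqref{condition4} is likewise vacuous since $Z_1 \setminus (Z_2 \cup W_{2\setminus 1}) = \emptyset$. And branch ii) of Condition \eqref{condition2} holds because $W_{1\setminus 2}\setminus Z_2 = \emptyset$, which is m-separated from $X$ given anything by the same convention.

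The only substantive thing left to check is Condition \eqref{condition1}, namely $S \perp_{\tilde{\g}} Y \mid W$. For this step I would invoke Theorem \ref{theorem:graphical valid CIS} applied to the valid conditional instrumental set $(Z \cup S, W)$, which yields $Z \cup S \perp_{\tilde{\g}} Y \mid W$. Since m-separation is preserved under taking subsets on either side of the conditioning bar, this immediately gives $S \perp_{\tilde{\g}} Y \mid W$, as required. Applying Theorem \ref{theorem:CIVcomparison} then yields the desired inequality $a.var(\hat{\tau}_{yx}^{zs.w}) \leq a.var(\hat{\tau}_{yx}^{z.ws})$ for every linear structural equation model compatible with $\g$ with $\Sigma_{xz.ws} \neq 0$.

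There is no real obstacle in this argument: the corollary is constructed precisely so that all set differences relevant to Theorem \ref{theorem:CIVcomparison} either vanish or coincide with $S$, and so that the single nontrivial m-separation statement is already contained in the validity of the finer-conditioning tuple $(Z\cup S, W)$.
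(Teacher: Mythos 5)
Your proof is correct and follows exactly the route the paper intends: the corollary is presented as a direct consequence of Theorem \ref{theorem:CIVcomparison}, and your instantiation $(Z_1,W_1)=(Z,W\cup S)$, $(Z_2,W_2)=(Z\cup S,W)$, with Condition \eqref{condition1} supplied by Condition (iii) of Theorem \ref{theorem:graphical valid CIS} applied to $(Z\cup S,W)$ and the remaining conditions void via the empty-set convention, is precisely the intended argument.
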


Intuitively, Corollary \ref{corollary:always instrument} states that covariates that may be added to both $Z$ and $W$ should be added to $Z$. 
If we restrict ourselves to the special case that $S$ is a singleton $N$, we can also show the following complementary result.

\begin{proposition}\label{prop:moreW}
	Consider nodes $X,Y$ and $N$ in an acyclic directed mixed graph $\g$ such that $\de(X,\g) = \{X,Y\}$. Let $(Z,W)$ be a linearly valid conditional instrumental sets relative to $(X,Y)$ in $\g$. 
	If $(Z,W\cup N)$ is a linearly valid conditional instrumental set relative to $(X,Y)$ in $\g$ but $(Z\cup N,W)$ is not, then for all linear structural equation models compatible with $\g$ such that $\Sigma_{xz.w}\neq 0$,
	$a.var(\hat{\tau}_{yx}^{z.wn}) \leq a.var(\hat{\tau}_{yx}^{z.w}).$
\end{proposition}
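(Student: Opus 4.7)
The plan is to apply Theorem~\ref{theorem:CIVcomparison} with $(Z_1, W_1) = (Z, W)$ and $(Z_2, W_2) = (Z, W \cup N)$. First I would dispose of the degenerate case $N \in W$, in which $(Z, W \cup N) = (Z, W)$ and the conclusion is immediate; the case $N \in Z$ is ruled out because it would force $(Z \cup N, W) = (Z, W)$ to be valid, contradicting the hypothesis. So assume $N \notin Z \cup W$. Then $W_{1 \setminus 2} = \emptyset$, $Z_1 = Z_2 = Z$, $W_{2 \setminus 1} = N$, and $W_{2 \setminus 1} \cap Z_1 = \emptyset$, so Conditions~\eqref{condition1}, \eqref{condition2}, and \eqref{condition4} of Theorem~\ref{theorem:CIVcomparison} hold vacuously, and Condition~\eqref{condition3} reduces to the disjunction
\[
N \perp_{\g} Z \mid W \quad \text{or} \quad N \perp_{\g} X \mid W.
\]

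Next I would translate the non-validity of $(Z \cup N, W)$ via Theorem~\ref{theorem:graphical valid CIS}. Validity of $(Z, W \cup N)$ gives $N \notin \f{\g}$, so $Z \cup N$ is non-forbidden; validity of $(Z, W)$ gives $Z \not\perp_{\g} X \mid W$, and hence $(Z \cup N) \not\perp_{\g} X \mid W$ as well. So the failure of $(Z \cup N, W)$ must be located in condition (iii) of Theorem~\ref{theorem:graphical valid CIS}, i.e., $(Z \cup N) \not\perp_{\tilde{\g}} Y \mid W$. Since $Z \perp_{\tilde{\g}} Y \mid W$, the semi-graphoid decomposition and weak union properties of m-separation give the equivalent statement $N \not\perp_{\tilde{\g}} Y \mid W \cup Z$; let $p$ denote a witnessing path from $N$ to $Y$ in $\tilde{\g}$, open given $W \cup Z$. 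The assumption $\de(X,\g) = \{X,Y\}$ implies that in $\tilde{\g}$ the node $X$ has no outgoing directed edges and no descendants in $W \cup Z$, so $X$ can be neither a non-collider nor an opened collider on $p$; hence $p$ avoids $X$ entirely.

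The main obstacle is to establish the above disjunction from these structural facts. I would argue by contradiction: assume both $N \not\perp_{\g} Z \mid W$ and $N \not\perp_{\g} X \mid W$, and splice a witness path for $N \not\perp_{\g} Z \mid W$ with a suitable sub-path of $p$ (or, when necessary, with a descending path from a collider on $p$ whose only $(W \cup Z)$-descendant lies in $Z$) to produce a path from some $z \in Z$ to $Y$ in $\tilde{\g}$ that is open given $W \cup N$, contradicting the validity of $(Z, W \cup N)$. Care is required at three points: handling the case where $p$ itself visits $Z$ (use the sub-path from the appropriate $Z$-node to $Y$); replacing the $N$-side of $p$ by a descending path to a $Z$-node when a collider on $p$ is only opened through $Z$; and controlling the collider/non-collider status at the splice node $N$ by selecting witness paths that are shortest in a suitable sense. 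The fact that $X$ does not appear on $p$ keeps the case analysis manageable, and this combinatorial graph surgery is the substantive work of the proof.
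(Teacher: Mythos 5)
Your reduction to Theorem \ref{theorem:CIVcomparison} is exactly right and matches the paper's: with $(Z_1,W_1)=(Z,W)$ and $(Z_2,W_2)=(Z,W\cup N)$ only Condition \eqref{condition3} is non-void, and it reduces to the disjunction $N\perp_{\g}Z\mid W$ or $N\perp_{\g}X\mid W$; your localization of the failure of $(Z\cup N,W)$ to condition (iii) of Theorem \ref{theorem:graphical valid CIS} is also correct. The gap is that the substantive step --- the path-splicing argument --- is only sketched, and the way you have set it up makes it materially harder than it needs to be. Rather than passing to $N\not\perp_{\tilde{\g}}Y\mid W\cup Z$ by contraction, observe that an open-given-$W$ path from $Z\cup N$ to $Y$ in $\tilde{\g}$ cannot start in $Z$ (by validity of $(Z,W)$), so it starts at $N$: you get a path $p$ from $N$ to $Y$ open given $W$ \emph{alone}. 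Then both witness paths ($p$, and a path $p'$ from some $A\in Z$ to $N$ open given $W$) are open given the same set $W$, and the spliced path $q=p'(A,I)\oplus p(I,Y)$ at the first intersection point $I$ of $p'$ with $p$ needs only a short case analysis ($I\in\{A,Y,N\}$; $I$ a non-collider; $I$ a collider, where $\de(I,\g)\cap W=\emptyset$ forces $p(I,N)$ to be directed towards $N$ so that $N\in\de(I,\g)$). Your version, with $p$ open given $W\cup Z$, must additionally handle colliders on $p$ whose only opening descendant lies in $Z$, $Z$-nodes appearing as conditioned colliders on $p$, and $N$ possibly sitting on the rerouting descending paths --- none of which you resolve, and all of which disappear under the simpler derivation.

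Your stated contradiction target is also too narrow. You aim only to violate $Z\perp_{\tilde{\g}}Y\mid W\cup N$, but in several branches of the splice (e.g., $I$ a non-collider, or $I=N$ with $N$ a non-collider on $q$) the resulting path is open given $W$ and contradicts validity of $(Z,W)$, not of $(Z,W\cup N)$; only when $N$ must be conditioned on to open a collider does the contradiction land on $(Z,W\cup N)$. The argument needs both escape hatches. Finally, the paper proves the first disjunct $N\perp_{\g}Z\mid W$ outright, so the second hypothesis of your contradiction ($N\not\perp_{\g}X\mid W$) is never used --- a sign that negating the full disjunction buys you nothing here.
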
   

Corollaries \ref{corollary:more instruments} and \ref{corollary:always instrument} along with Proposition \ref{prop:moreW} are very useful tools for identifying efficient tuples. We illustrate this by revisiting the graph from Fig. $\ref{subfig:weird}$ and using the three results to identify a tuple guaranteed to provide the smallest asymptotic variance among all linearly valid tuples.

\begin{example}[Asymptotically optimal tuple] \label{ex:weird}
	Consider the graph $\g$ in Fig. \ref{subfig:weird}. By Example \ref{ex:valid CIS} there are $21$ linearly valid conditional instrumental sets relative to $(X,Y)$ in $\g$, which we list in Table \ref{table:valid cis} of the Supplementary Material. 
Let $(Z,W)$ be a linearly valid tuple such that $Z\cup W = \{A,B,C,D\}$. Then, $C \in W$. We can therefore apply Corollary \ref{corollary:always instrument} with $S = \{A,B,D\} \setminus Z$ and conclude that the linearly valid tuple $(\{A,B,D\},C)$ is more efficient than $(Z,W)$. Consider now a linearly valid tuple $(Z,W)$ such that $Z\cup W \neq \{A,B,C,D\}$. We now construct a more efficient linearly valid tuple $(Z',W')$ such that $Z'\cup W' = \{A,B,C,D\}$. If $C \notin W$, then $(Z,W')$ with $W'=W\cup C$ is a linearly valid tuple. Further, $C \notin Z$ for all linearly valid tuples in $\g$. Therefore, we can invoke Proposition \ref{prop:moreW} with $N=C$ and conclude that $(Z,W')$ is more efficient than $(Z,W)$. Let $S=\{A,B,C,D\} \setminus (Z\cup W')$ and $Z'=Z\cup S$. Since $C \in W'$, $(Z',W')$ is a linearly valid tuple and we can therefore apply Corollary \ref{corollary:more instruments} to conclude that $(Z',W')$ is more efficient than $(Z,W')$. Combining the two results, we can conclude that $(\{A,B,D\},C)$ provides the smallest attainable asymptotic variance among all linearly valid tuples. 
\end{example}

\subsection{Greedy forward procedure}
\label{subsec:algorithm}

As shown in the previous section, the asymptotic variance can behave in complex ways. 
However, in the simple case where we are given a linearly valid tuple $(Z,W)$ and a covariate $N$, we can use the results of Section \ref{subsec:efficiency} to derive the following simple rules. If we can add $N$ to $Z$, then we should do so by Corollaries
\ref{corollary:more instruments} and \ref{corollary:always instrument}. If we cannot add $N$ to $Z$, but may add it to $W$, then we should add $N$ to $W$ by Proposition \ref{prop:moreW}.

\begin{algorithm}[t]
\caption{ Greedy forward procedure \label{algorithm}}
 	\LinesNumbered 
 	\SetKwData{Set}{Set}
 	\SetKwFunction{Foreach}{Foreach}
 	\SetKwInOut{Input}{input}
 	\SetKwInOut{Output}{output}
 	\Input{Acyclic directed mixed graph $\g=(V,E)$ and $X,Y,Z,W$ such that $(Z,W)$ is a linearly valid conditional instrumental set relative to $(X,Y)$ in $\g$}
 	\Output{Linearly valid conditional instrumental set $(Z',W')$ relative to $(X,Y)$ in $\g$ such that $a.var(\hat{\tau}^{z'.w'}_{yx}) \leq a.var(\hat{\tau}^{z.w}_{yx})$} 
	\BlankLine
  	\Begin{
  		$Z' = Z, W'=W, V'=V \setminus (\{X,Y\} \cup Z \cup W$); \\
        \ForEach{$N \in W'$}{
        \If{ $ (Z'\cup N,W'\setminus \{N\})$ linearly valid}
  			{ $Z' = Z' \cup N$ \text{ and} $W' = W' \setminus N$;}}
  		\ForEach{ $N \in V'$}{
  			\If{ $ (Z'\cup N,W')$ linearly valid \rm{and} $N \not\perp_{\g}X\mid W'\cup Z'$}
  			{ $Z' = Z' \cup N$;}
 			 \ElseIf{ $ (Z',W' \cup N)$ linearly valid \rm{and} $N \not\perp_{\tilde{\g}} Y \mid W'$}
 			 {$W'=W' \cup N$}}}
 		\Return{$(Z',W')$;}
\end{algorithm}

Based on these two rules, we propose the greedy two phase Algorithm \ref{algorithm} which given a linearly valid conditional instrumental set $(Z,W)$ returns a more efficient tuple. In the first phase, Algorithm \ref{algorithm} moves all covariates in $W$ where this is possible to $Z$. In the second phase, it greedily adds any additional covariates to either $Z$ or $W$ while minimizing the asymptotic variance.
The m-separation checks in the second phase of Algorithm \ref{algorithm} are not necessary in the sense that we could drop them and the algorithm would still greedily minimize the asymptotic variance. We include them, nonetheless, as otherwise the algorithm would add covariates that do not affect the asymptotic variance to the tuple. This would needlessly grow the output tuple, likely negatively affecting the finite sample performance. Adding the two m-separation checks ensures that in each step the algorithm only adds nodes to the tuple that may decrease the asymptotic variance. We prove the soundness of Algorithm \ref{algorithm} in Section \ref{sec:algo prelim} of the Supplementary Material. We emphasize that due to the second phase, the output tuple is order dependent and has no efficiency guarantee when compared to all linearly valid tuples in $\g$.

\begin{example}[Illustrating Algorithm \ref{algorithm}]
	\label{ex:algorithm}
	Consider the graph $\g$ from Fig. \ref{subfig:weird} (see Table \ref{table:valid cis} in the Supplementary Material for a list of the linearly valid tuples in $\g$). Suppose our starting linearly valid conditional instrumental set is $(B,C)$. Since $(\{B,C\},\emptyset)$ is not a linearly valid tuple, the first phase returns $(B,C)$. Regarding the second phase, consider first the case that Algorithm \ref{algorithm} checks $A$ before $D$. Because $A\perp_{\g} X \mid \{B,C\}$ and $A \perp_{\tilde{\g}} Y \mid C$, Algorithm \ref{algorithm} discards $A$ even though $(\{A,B\},C)$ and $(B,\{A,C\})$ are linearly valid tuples. Because $(\{B,D\},C)$ is a linearly valid tuple and $D\not\perp X \mid \{B,C\}$, it then adds $D$ to $Z'$ and outputs the tuple $(\{B,D\},C)$. Consider now the case that Algorithm \ref{algorithm} checks $D$ before $A$. Again, $D$ is added to $Z'$. However, as $(\{A,B,D\},C)$ is a linearly valid tuple and $A \not\perp_{\g} X \mid \{B,C,D\}$, Algorithm \ref{algorithm} also adds $A$ to $Z'$. The output tuple is therefore $(\{A,B,D\},C)$ in this case.
	Both $(\{B,D\},C)$ and $(\{A,B,D\},C)$ are more efficient than $(B,C)$ but only $(\{A,B,D\},C)$ provides the smallest attainable asymptotic variance among all tuples. 
\end{example}

 Based on the two rules underpinning Algorithm \ref{algorithm}, we propose the following guidelines for practitioners. They are particularly useful in cases where we have a basic understanding of the causal structure, but not enough knowledge of the causal graph to apply Theorem \ref{theorem:CIVcomparison} directly.

\begin{remark}[Practical guidelines]\label{remark:guidelines}
Covariates that can be used as instrumental variables should be used as instrumental variables. Covariates that can \emph{only} be used as conditioning variables should be used as conditioning variables. 
\end{remark}

\subsection{Graphically optimal linearly valid conditional instrumental sets}
\label{subsec:CIVselection}

    In Example \ref{ex:weird} we identified a linearly valid conditional instrumental set that provides the smallest attainable asymptotic variance among all linearly valid tuples using only the graph. In the closely related literature on efficient valid adjustment sets \citet{henckel2019graphical} referred to this property as asymptotic optimality. They also provided a graphical characterization of a valid adjustment set that for linear structural equation models with independent errors is asymptotically optimal and showed that in cases with latent variables, i.e., correlated errors, no asymptotically optimal valid adjustment set may exist. The following example shows that, similarly, an asymptotically optimal linearly valid conditional instrumental set may not exist.

	\begin{example}[No asymptotically optimal tuple]
	Consider the graph $\g$ from Fig. \ref{subfig:no optimal}. There are five linearly valid tuples with respect to $(X,Y)$ in $\g$:  $(A,\emptyset),(B,\emptyset),(\{A,B\},\emptyset)$, $(A,B)$ and $(A,\{B,C\})$. We consider two linear structural equation models compatible with $\g$: for model $\mathcal{M}_1$ let all error variances and edge coefficients be $1$, where we model the bi-directed edges with a latent variable, i.e, $X \leftarrow L_1 \rightarrow Y$ and $C \leftarrow L_2 \rightarrow Y$. For model $\mathcal{M}_2$ do the same, except for setting the edge coefficient for the edge $A \rightarrow B$ to $0.1$. For both models we computed the asymptotic variance corresponding to the five linearly valid tuples. The results are given in Table \ref{table:no optimal} and they show that in $\mathcal{M}_1$ the most efficient tuples are $(A,\emptyset)$ and $(\{A,B\},\emptyset)$ while in $\mathcal{M}_2$ the most efficient tuple is $(A,\{B,C\})$. Therefore, there is no asymptotically optimal tuple.
	
	The reason that $(A,\emptyset)$ is more efficient than $(A,\{B,C\})$ in $\mathcal{M}_1$ but less efficient in $\mathcal{M}_2$, is that adjusting for $\{B,C\}$ reduces both the residual variance and the instrumental strength. The overall effect on the asymptotic variance depends on how large these reductions are, which depends on the strength of the edges $A \rightarrow B$ and $C \leftrightarrow Y$, respectively. Because the edge $A \rightarrow B$ has a small edge coefficient in $\mathcal{M}_2$, adjusting for $\{B,C\}$ reduces the asymptotic variance here.
\label{ex:no optimal}
\end{example}

	\begin{table}[t]
 \begin{center}
		\caption{Asymptotic variances for all linearly valid tuples in the graph from Fig. \ref{subfig:no optimal} in the two linear structural equation models described in Example \ref{ex:no optimal}}
		\begin{tabular}{rrrrrr}
		    & $(A,\emptyset)$ & $(A,B)$ & $(A,\{B,C\})$ & 
		        $(B,\emptyset)$ & $(\{A,B\},\emptyset)$ \\
			$\mathcal{M}_1$ & $3$ & $6$ & $5$ & $6$ & $3$ \\ 
			$\mathcal{M}_2$ & $3$ & $3.12$ & $2.6$ & $78$ & $3$
		\end{tabular}
		\label{table:no optimal}
  \end{center}
	\end{table}
	
Example \ref{ex:no optimal} does not contradict our results from Section \ref{subsec:algorithm}, as these cover the special case of adding a single covariate to a linearly valid tuple at a time. 
Given that there may not be an asymptotically optimal conditional instrumental set, we now introduce the concept of a graphically optimal tuple. It is inspired by the definition of an admissible decision rule in statistical decision theory \citep[Chapter 2,][]{robert2007bayesian}.

\begin{definition}
	\label{definition:graphical optimality}
	Let $X$ and $Y$ be nodes in an acyclic directed mixed graph $\g$ and let $(Z,W)$ be a linearly valid conditional instrumental set relative to $(X,Y)$ in $\g$. We call $(Z,W)$ graphically suboptimal relative to $(X,Y)$ in $\g$, if there exists a linearly valid conditional instrumental set $(Z',W')$ such that for all linear structural equation models compatible with $\g$ in which $\Sigma_{xz.w} \neq 0$ holds, $a.var(\hat{\tau}_{yx}^{z'.w'}) \leq a.var(\hat{\tau}_{yx}^{z.w})$ and for at least one model this inequality is strict. We say that $(Z,W)$ is graphically optimal if it is not graphically suboptimal.
\end{definition}

Graphical optimality is a natural generalization of asymptotic optimality. In particular, there always exists a graphically optimal tuple and if an asymptotically optimal one exists, any graphically optimal tuple is asymptotically optimal. We now give a definition which we use to construct a conditional instrumental set that is linearly valid and graphically optimal under mild conditions. 

\begin{definition}
	Let $N$ be a node and $W$ a node set in an acyclic directed mixed graph $\g$ with node set $V$. Let $\mathrm{dis}_{W}(N,\g) = \{A \in V: A \leftrightarrow V_1 \leftrightarrow \dots\leftrightarrow V_k \leftrightarrow N, V_1\notin W,\dots,V_k \notin W \}$.
	Then let $\mathrm{dis}^+_{W}(N,\g)$ denote the set 
	$\{\mathrm{dis}_{W}(N,\g) \cup \pa(\mathrm{dis}_{W}(N,\g),\g)\}\setminus W$.
	\label{definition:district}
\end{definition}

\begin{theorem}
	Consider nodes $X$ and $Y$ in an acyclic directed mixed graph $\g$ such that $\de(X,\g) = \{X,Y\}$. Let $W^o= \mathrm{dis}^+_{X}(Y,\g) \setminus \{X,Y\}$ and $Z^o=\mathrm{dis}^+_Y(X,\g) \setminus (\{X,Y\} \cup W^o)$. Then the following two statements hold: (i) if $Z^o \neq \emptyset$  then $(Z^o, W^o)$ is a linearly valid conditional instrumental set relative to $(X,Y)$ in $\g$; (ii) if $Z^o \cap \{\pa(X,\g) \cup \mathrm{sib}(X,\g)\} \neq \emptyset$ then $(Z^o, W^o)$ is also graphically optimal relative to $(X,Y)$ in $\g$.
	\label{theorem:optimalCIS}
\end{theorem}

The tuple $(Z^o,W^o)$ is constructed as follows. We first choose $W^o$ in order to minimize the residual variance. With $W^o$ fixed we then in turn choose $Z^o$ from the remaining covariates in order to maximize the conditional instrumental strength. The conditions $Z^o \neq \emptyset$  and $Z^o \cap \{\pa(X,\g) \cup \mathrm{sib}(X,\g)\} \neq \emptyset$ ensure that the resulting $Z^o$ is not too small.
We give examples where the two conditions are violated in Section \ref{sec:weird optimal cis} of the Supplementary Material and emphasize that $Z^o=\emptyset$ may occur in cases where other tuples are linearly valid. For interested readers, we also provide an example in Section \ref{sec:weird optimal cis} where all linearly valid tuples are graphically optimal.

\begin{example}[Illustrating Theorem \ref{theorem:optimalCIS}]
	\label{ex:optimal cis}
	We revisit the graphs from Figure $\ref{subfig:weird}$ and $\ref{subfig:no optimal}$, denoting them $\g$ and $\g'$, respectively.
	In $\g$, $Z^o=\{A,B,D\}$ and $W^o=C$. By Examples \ref{ex:valid CIS} and \ref{ex:weird}, $(\{A,B,D\},C)$ is a linearly valid conditional instrumental set relative to $(X,Y)$ in $\g$ and in fact asymptotically optimal. 
	In $\g'$, $Z^o=A$ and $W^o=\{B,C\}$. By Example \ref{ex:no optimal}, $(A,\{B,C\})$ is a linearly valid conditional instrumental set relative to $(X,Y)$ in $\g'$. There is no asymptotically optimal linearly valid conditional instrumental set relative to $(X,Y)$ in $\g'$. However, as $(A,\{B,C\})$ is more efficient than any other tuple in $\mathcal{M}_2$ it follows that it is graphically optimal.

\end{example}

\section{Simulations}
\label{sec:simulations}

 \begin{figure}[t]	
  \captionsetup{width=10cm}
  		\centering
 		\includegraphics[height=10cm, width=12.5cm]{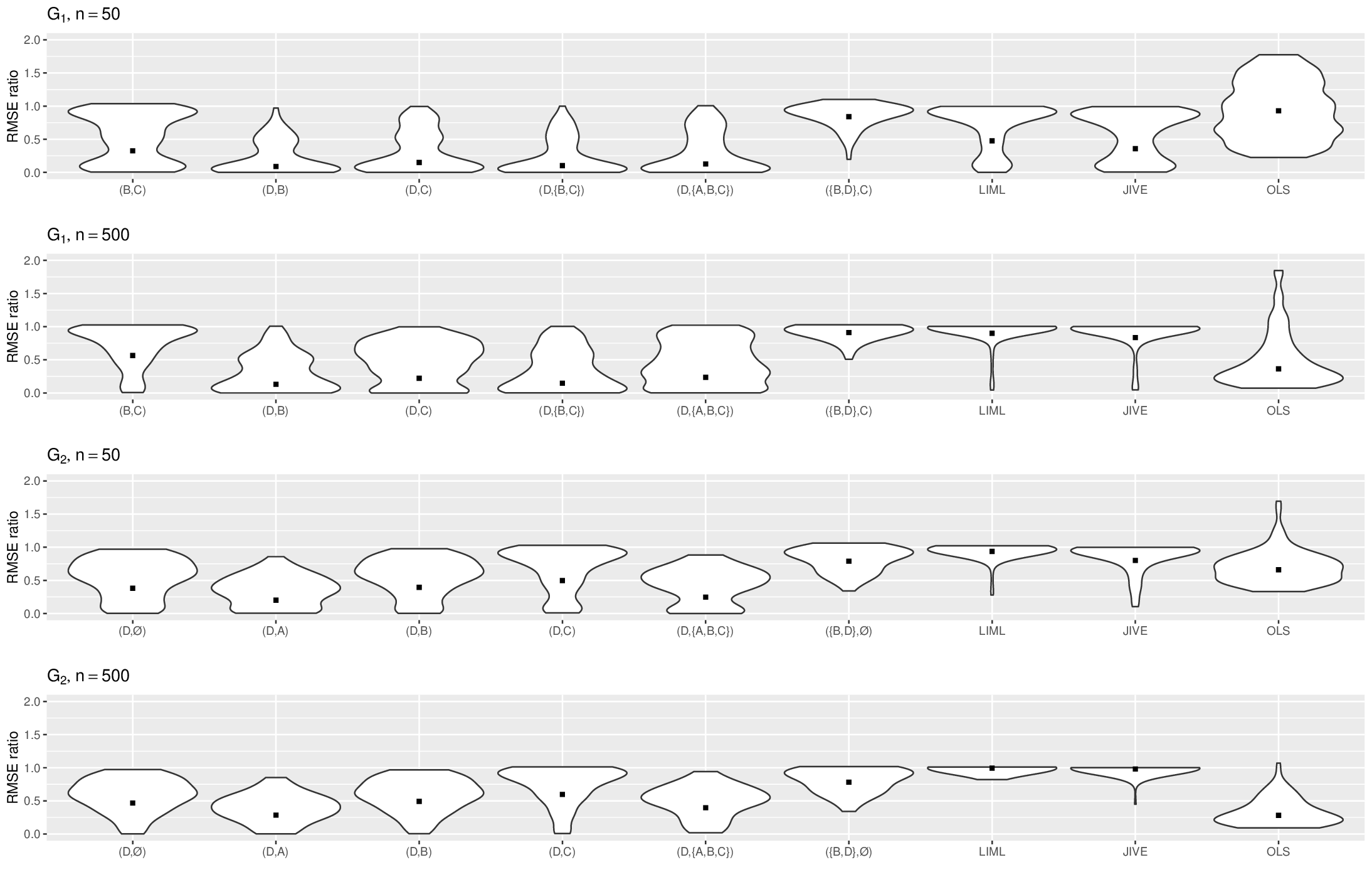}
 	\caption{Violin plot of the root mean squared error ratios for the two-stage least squares estimator with $(Z^o,W^o)$ to the one with the $(Z,W)$ given on the X-axis. We use LIML to denote the limited information maximum likelihood estimator using $(Z^o,W^o)$, JIVE to denote the jackknife instrumental variables estimator using $(Z^o,W^o)$ and OLS to denote the ordinary least squares regression of $Y$ on $X$. 
  The black dots mark the geometric mean of the ratios.}
 	\label{fig:violin}
 \end{figure}

We investigated the finite sample efficiency of $(Z^o,W^o)$ and how it compares to alternative linearly valid conditional instrumental sets. For further comparison, we also considered the following three alternative estimators to the two-stage least squares: (i) the limited information maximum likelihood (LIML) estimator \citep{anderson1949estimation,phillips1977bias}, (ii) the jackknife instrumental variables (JIVE) estimator \citep{angrist1999jackknife} and (iii) the ordinary least squares regression of $Y$ on $X$. The first two are alternative instrumental variable estimators with the same asymptotic distribution as the two-stage least squares estimator but with distinct finite sample characteristics. We include the ordinary least squares regression as a non-causal baseline. We did so in linear structural equation models compatible with the two graphs $\g_1$ and $\g_2$ from Figures \ref{subfig:weird forb} and \ref{subfig:simple}, respectively. Recall that by Example \ref{ex:optimal cis}, the tuple $(Z^o,W^o)$ in $\g_1$ is asymptotically optimal. We also show that the tuple $(Z^o,W^o)$ in $\g_2$ is asymptotically optimal in Example \ref{ex:optimal 2} of the Supplementary Material.

For each graph, we randomly generated $100$ compatible linear structural equation models as follows. We considered models with either all Gaussian or all uniform errors, each with probability $0.5$. For each node we sampled an error variance uniformly on $[0.1,1]$ and for each edge an edge coefficient uniformly on $[-2,-0.1] \cup [0.1,2]$. Any bi-directed edge $V_i \leftrightarrow V_j$ was modelled as a latent variable $V_i \leftarrow L \rightarrow V_j$, with the error variances and edge coefficients generated as for the other nodes and edges.
From each linear structural equation model we generated $100$ data sets with sample size $n$ and used these to compute the two-stage least squares, the limited information maximum likelihood and  the jackknife instrumental variables estimators corresponding to all available linearly valid conditional instrumental sets relative to $(X,Y)$ in the respective graph as well as the ordinary least squares regression of $Y$ on $X$. For each of these estimators, we computed Monte-Carlo root mean squared errors with respect to the known true total effect $\tau_{yx}$ over the $100$ simulated data sets. We did this separately for the sample sizes $n=50$ and $n=500$. 
To compare the performances, we computed the ratio of the root mean squared error for the two-stage least squares with $(Z^o,W^o)$ to the one for each of the alternative linearly valid tuples and estimators. A ratio smaller than $1$ indicates that two-stage least squares with $(Z^o,W^o)$ was more efficient than the other estimator. A ratio larger than $1$ indicates the opposite. Fig. \ref{fig:violin} shows violin plots of these ratios over the $100$ linear structural equation models. As there are $21$ linearly valid conditional instrumental sets relative to $(X,Y)$ in $\g_1$ and $34$ in $\g_2$, we only show the violin plots for a representative subset for the two-stage least squares estimator and only consider the optimal tuple for the alternative instrumental variable estimators. Please see Section \ref{sec:simulations appendix} of the Supplementary Material for further plots.

The violin plots corroborate our theoretical results. The optimal two-stage least squares outperforms all other estimators, with few of the ratios larger than $1$. This is also true for the sample size $n=50$ even though our results only hold asymptotically and the alternative estimators which our results do not cover. Interestingly, the most competitive estimator in $\g_1$ and for $n=50$ is the inconsistent ordinary least squares estimator. This illustrates that using the two-stage least squares estimator rather than the ordinary least squares estimator represents a bias-variance trade-off. Our simulation indicates that it is important to consider statistical efficiency when selecting the conditional instrumental set to ensure that this bias-variance trade-off is favourable in small samples. The fact that the optimal limited information maximum likelihood and the optimal jackknife instrumental variables estimator perform worse than the ordinary least squares for $\g_1$ and $n=50$ indicates that this bias-variance trade-off is not just a characteristic of the two-stage least squares but rather of instrumental variable estimators in general, which further emphasizes the importance of carefully selecting the conditional instrumental set in practice.

\section{Estimating the causal effect of institutions on wealth}

	\begin{table}[t]
 \begin{center}
	\def~{\hphantom{0}}
		\caption{Estimated total effect of institutions on economic wealth and standard errors with varying conditional instrumental sets}
		\begin{tabular}{rrrr}
			$Z$ & $W$ & estimate & standard error\\ 
			$\mathrm{settler \ mort.}$ &  $\emptyset$ & 0.94 & 0.16 \\ 
			$\mathrm{settler \ mort.}$  & $\mathrm{latitude}$ & 1.00 & 0.22 \\ 
			$\mathrm{settler \ mort.}$  & $\mathrm{euro.\ anc.}$ & 0.96 & 0.28 \\ 
			$\mathrm{settler \ mort.}$ & $\mathrm{ethnoling. \ frag.}$ & 0.74 & 0.13 \\ 
			$\{\mathrm{settler \ mort., euro. \ anc.}\}$  & $\emptyset$ & 0.94 & 0.14 \\ 
			$\{\mathrm{settler \ mort., euro. \ anc.}\}$  & $\mathrm{ethnoling. \ frag.}$ & 0.74 & 0.11 \\ 
		\end{tabular}
		\label{table:real data}
 \end{center}
	\end{table}

We revisit the analysis by \citet{acemoglu2001colonial} of the effect of institutions on economic wealth with the mortality of European settlers as an instrument. In their analysis \citet{acemoglu2001colonial} consider a variety of different conditioning sets. They diligently check that all of them yield a coherent estimate of the total effect of institutions on economic wealth. This serves as a robustness check for their analysis. The authors do not, however, consider how the standard errors differ depending on the conditioning set. We investigate this for three of the available covariates: latitude, ethnolinguistic fragmentation, and percentage of population of European ancestry. We summarize our results in Table~\ref{table:real data}.
Even though we do not have a causal graph, we can follow arguments by \citeauthor{acemoglu2001colonial} to approximately check the conditions of Theorem~\ref{theorem:CIVcomparison}.

According to the authors, latitude and percentage of population with European ancestry are correlated with settler mortality, because (i) tropical diseases were a major cause of settler mortality and (ii) low settler mortality led to a larger settler population. Accepting this, we classify both as covariates of the same type as variable $A$ in Figure~\ref{subfig:simple}. By the argument of harmful conditioning from Example~\ref{ex:harmful} we expect that conditioning on them leads to a larger standard error and we indeed observe this in our calculations. 
Adding percentage of population with European ancestry to the instrumental set, on the other hand, 
leads to a smaller standard error, as expected per Corollary~\ref{corollary:always instrument}.
Similarly, the variable ethnolinguistic fragmentation belongs to the same class of covariates as variable $C$ in Figure~\ref{subfig:simple}. This is because cultural barriers are often also market barriers and as a result, ethnolinguistic fragmentation is predictive of economic wealth. This time, by the argument of beneficial conditioning from Example \ref{ex:beneficial}, adjusting for ethnolinguistic fragmentation should lead to a smaller standard error and we indeed observe this in our calculations. 

In our analysis the smallest standard error (0.11) is less than half the size of the largest (0.28), illustrating the potential advantages of applying our results to select the best linearly valid tuple. We emphasize that identifying an efficient tuple by computing the standard errors for many tuples and selecting the one with the smallest standard error leads to invalid standard errors due to post-selection inference \citep{berk2013valid}. Using qualitative causal background knowledge, ideally in form of a graph, to apply our results does not suffer from this drawback.

\section{Discussion}
There are many instrumental variable estimators other than the two-stage least squares estimator, such as the limited information maximum likelihood and the jackknife instrumental variables estimator we considered in our simulation study, which were developed for settings with many weak instruments. It would be interesting to try to generalize our results to a broad class of alternative instrumental variable estimators and beyond the linear setting with a fixed number of instruments.

Finally, we would like to point out four other interesting avenues for future research: First, to graphically characterize when an asymptotically optimal linearly valid conditional instrumental set exists \citep[cf.][]{runge2021necessary}. Second, to graphically characterize a linearly valid conditional instrumental set that is graphically optimal and has maximal conditional instrumental strength. As the finite sample efficiency of our estimators appears to be more vulnerable to a small conditional instrumental strength than a large residual variance (see Section \ref{sec:simulations}), such a tuple might be a good alternative to $(Z^o,W^o)$, especially in small samples. Third, to generalize Theorem \ref{theorem:CIVcomparison} such that it covers all cases where we can use the graph to decide which of two linearly valid conditional instrumental sets provides the smaller asymptotic variance, that is, derive a necessary and sufficient graphical criterion. Fourth, to generalize the paper's results and in particular Theorem \ref{theorem:graphical valid CIS} to the setting where $X$ and $Y$ are sets.

\section*{Acknowledgement}
We thank Nicola Gnecco and Jonas Peters for valuable comments.

\bibliography{paper-ref}%
\bibliographystyle{apalike}%

\newpage

\appendix

\section{Preliminaries and known results}
	
	\subsection{General preliminaries} 
	
		\vsp\noindent \emph{Covariance matrices and regression coefficients:}
Consider random vectors $S=(S_1,\dots,S_{k_s}),T=(T_1,\dots,T_{k_t})$ and $W$.
We denote the covariance matrix of $S$ by $\Sigma_{ss} \in\mathbb{R}^{k_s \times k_s}$ and the covariance matrix between $S$ and $T$ by $\Sigma_{st}  \in \mathbb{R}^{k_s \times k_t}$, where its $(i,j)$-th element equals $\mathrm{cov}(S_i,T_j)$. We further define 
$\Sigma_{st.w} = \Sigma_{st} - \Sigma_{sw} \Sigma^{-1}_{ww} \Sigma_{wt}$. If $k_s=k_t=1$, we write $\sigma_{st.w}$ instead of $\Sigma_{st.w}$. The value $\sigma_{ss.w}$ can be interpreted as the residual variance of the ordinary least squares regression of $S$ on $W$. We also refer to $\sigma_{ss.w}$ as the conditional variance of $S$ given $W$.  
Let $\beta_{st.w} \in \mathbb{R}^{k_s \times k_t}$ represent the population level least squares coefficient matrix whose $(i,j)$-th element is the regression coefficient of $T_j$ in the regression of $S_i$ on $T$ and $W$. We denote the corresponding estimator as $\hat{\beta}_{st.w}$. Finally, for random vectors $W_1, \dots, W_m$ with $W=(W_1, \dots, W_m)$ we use the notation that $\beta_{st.{w_1\cdots w_m}} = \beta_{st.w}$ and $\Sigma_{st.w_1 \cdots w_m} = \Sigma_{st.w}$.

\vsp\noindent \emph{Two stage least squares estimator:} \citep{basmann1957generalized}
Consider two random variables $X$ and $Y$, and two random vectors $Z$ and $W$. Let $S_{n},T_n$ and $Y_n$ be the random matrices corresponding to taking $n$ i.i.d. observations from the random vectors $S=(X,W)$, $T=(Z,W)$ and $Y$, respectively. Then the two stage least squares estimator $\hat{\tau}_{yx}^{z.w}$ is defined as the first entry of the larger estimator
\[ 
\hat{\gamma}_{ys.t}= Y_n T_n^\top (T_n T_n^\top)^{-1} T_n S_n^\top
\{S_n T_n^\top (T_n T_n^\top)^{-1} T_n S_n^\top \}^{-1},
\]
where we repress the dependence on the sample size $n$ for simplicity.
We refer to the tuple $(Z,W)$ as the \emph{conditional instrumental set}, to $Z$ as the instrumental set and to $W$ as the conditioning set. We also let $\gamma_{ys.t}=\Sigma_{yt} \Sigma_{tt}^{-1} \Sigma_{ts} (\Sigma_{st} \Sigma_{tt}^{-1} \Sigma_{st}^\top)^{-1}$ denote the population level two-stage least squares estimator of $Y$ on $S$ with instrumental set $T$, which exists whenver $\Sigma_{st} \Sigma_{tt}^{-1} \Sigma_{st}^\top$ is invertible.
	
	\subsection{Graphical and causal preliminaries}
	\label{sec:prelim:graph}
	
	\vsp\noindent \emph{Graphs:} 
	We consider graphs $\g=(V,E)$ with node set $V$ and edge set $E$, where
	edges can be either directed ($\rightarrow$) or bidirected ($\leftrightarrow$).  
	If all edges in $E$ are directed, then $\g$ is a \emph{directed graph}. If all edges in $E$ are directed or bidirected, then $\g$ is a directed mixed graph. 
	
	\vsp \noindent \emph{Paths:} 
	Two edges are \emph{adjacent} if they have a common node. A walk is a sequence of adjacent edges. A path $p$ is a sequence of adjacent edges without repetition of a node and may consist of just a single node. The first node $X$ and the last node $Y$ on a path $p$ are called \emph{endpoints} of $p$ and we say that $p$ is a path from $X$ to $Y$. Given two nodes $Z$ and $W$ on a path $p$, we use $p(Z, W)$ to denote the subpath of $p$ from $Z$ to $W$. A path from a set of nodes $S$ to a set of nodes $T$ is a path from a node $X \in S$ to some node $Y \in T$. A path from a set $S$ to a set $T$ is proper if only the first node is in $S$ \citep[cf.][]{shpitser2010validity}. 
	A path $p$ is called \emph{directed} or \emph{causal} from $X$ to $Y$ if all edges on $p$ are directed and point towards $Y$. We use $\oplus$ to denote the concatenation of paths. For example, for any path $p$ from $X$ to $Y$ with intermediary node $Z,p=p(X,Z) \oplus p(Z,Y)$.

	\vsp\noindent \emph{Ancestry:} If $X \to Y$, then $X$ is a parent of $Y$ and $Y$ is a child of $X$. 
	If there is a causal path from $X$ to $Y$, then $X$ is an ancestor of $Y$ and $Y$ a descendant of $X$. If $X \leftrightarrow Y$, then $X$ and $Y$ are siblings. 
	We use the convention that every node is an ancestor, descendant and sibling of itself. 
	The sets of parents, ancestors, descendants and siblings of $X$ in $\g$ are denoted by $\pa(X,\g)$, $\an(X,\g)$, $\de(X,\g)$ and $\mathrm{sib}(X,\g)$, respectively. 
	For sets $S$, we let $\pa(S,\g)=\bigcup_{X \in S} \pa(X,\g)$, with analogous definitions for $\an(S,\g)$, $\de(S,\g)$ and $\mathrm{sib}(S,\g)$.

	\vsp\noindent \emph{Colliders:} A node $V_j$ is a collider on a path $V_1 \cdots V_j \cdots V_m$ if $p$ contains a subpath of the form $V_{j-1} \rightarrow V_j \leftarrow V_{j+1}, V_{j-1} \rightarrow V_j \leftrightarrow V_{j+1}, V_{j-1} \leftrightarrow V_j \leftarrow V_{j+1}$ or $V_{j-1} \leftrightarrow V_j \leftrightarrow V_{j+1}$. A node $V$ on a path $p$ is called a \emph{non-collider} on $p$ if it is neither a collider on $p$ nor an endpoint node of $p$.

	\vsp\noindent \emph{Directed cycles, directed acyclic graphs and acyclic directed mixed graphs:} A directed path from a node $X$ to a node $Y$, together with the edge $Y\to X$ forms a directed cycle. A directed graph without directed cycles is called a directed acyclic graph and a directed mixed graph without directed cycles is called a acyclic directed mixed graph.
	
	\vsp\noindent \emph{Blocking, d-separation and m-separation:} (Definition 1.2.3 in \cite{pearl2009causality} and Section 2.1 in \cite{richardson2003markov}). 
	Let $S$ be a set of nodes in an acyclic directed mixed graph $\g$. A path $p$ in $\g$ is blocked by $S$ if (i) $p$ contains a non-collider that is in $S$, or (ii) $p$ contains a collider $C$ such that no descendant of $C$ is in $S$. A path that is not blocked by a set $S$ is open given $S$. If $S,T$ and $W$ are three pairwise disjoint sets of nodes in $\g$, then $W$ m-separates $S$ from $T$ in $\g$ if $W$ blocks every path between $S$ and $T$ in $\g$. We then write $S \perp_{\g} T\mid W$. Otherwise, we write $S \not\perp_{\g} T\mid W$. 
	We use the convention that for any two disjoint node sets $S$ and $T$ it holds that $\emptyset \perp_{\g} S \mid T$.

 Alternatively and equivalently, we can define m-separation using walks instead of paths as follows. A walk $w$ is blocked by $S$ if (i) $w$ contains a non-collider that is in $S$, or (ii) $p$ contains a collider $C$ such that $C$ is not in $S$. A walk that is not blocked by a set $S$ is open given $S$. Further, $W$ m-separates $S$ from $T$ in $\g$ if $W$ blocks every walk between $S$ and $T$ in $\g$.

	\vsp\noindent \emph{Markov property and faithfulness:} \cite[Definition 1.2.2][]{pearl2009causality} Let $S$, $T$ and $W$ be disjoint sets of random variables. We use the notation $S \ci T\mid W$ to denote that $S$ is conditionally independent of $T$ given $W$. A density $f$ is called \emph{Markov} with respect to an acyclic directed mixed graph $\g$ if $S \perp_{\g} T \mid W$ implies $S \ci T \mid W$ in $f$. If this implication holds in the other direction, then $f$ is \emph{faithful} with respect to $\g$.
	
		\vsp\noindent \emph{Causal paths and forbidden nodes:} \citep[][]{perkovic16} 
Let $X$ and $Y$ be nodes in an acyclic directed mixed graph $\g$. 
A path from $X$ to $Y$ in $\g$ is called a causal path from $X$ to $Y$ if all edges on $p$ are directed and point towards $Y$. 
We define the causal nodes with respect to $(X,Y)$ in $\g$ as all nodes on causal paths from $X$ to $Y$ excluding $X$ and denote them $\cn{\g}$. 
We define the forbidden nodes relative to $(X,Y)$ in $\g$ as $\f{\g} = \de(\cn{\g}, \g) \cup X$.

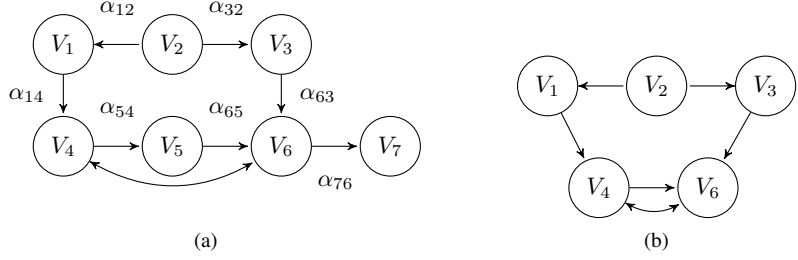
\begin{figure}[t]
	\centering
 	\subfloat[\label{subfig:CIS}]{
		\begin{tikzpicture}[>=stealth',shorten >=1pt,auto,node distance=0.7cm,scale=.9, transform shape,align=center,minimum size=3em]
		\node[state] (x) at (0,0) {$V_4$};
		\node[state] (m) [right =of x]   {$V_5$}; 
		\node[state] (y) [right =of m] {$V_6$};
		\node[state] (w1) at ($(x)+(.0,1.5)$) {$V_1$};  
		\node[state] (w2) at ($(m)+(0,+1.5)$)   {$V_2$};  
		\node[state] (w3) at ($(y)+(0,+1.5)$)   {$V_3$};  
		\node[state] (f2) [right =of y]   {$V_7$};  
		\path[->]   (x) edge node[above] {$\alpha_{54}$} (m);
		\path[->]   (m) edge node[above] {$\alpha_{65}$} (y);
		\path[<->]   (x) edge  [bend right] (y);
		\path[<-]   (w1) edge node[above] {$\alpha_{12}$} (w2);
		\path[->]   (w1) edge node[left] {$\alpha_{14}$} (x);
		\path[->]   (w2) edge node[above] {$\alpha_{32}$}  (w3);
		\path[->]   (w3) edge node[right] {$\alpha_{63}$} (y);
		\path[->]   (y) edge  node[below] {$\alpha_{76}$} (f2);
				\end{tikzpicture}
	} 
	\hspace{1cm}
	\subfloat[\label{subfig:latent}]{
		\begin{tikzpicture}[>=stealth',shorten >=1pt,auto,node distance=0.7cm,scale=.9, transform shape,align=center,minimum size=3em]
		\node[state] (x) at (0,0) {$V_4$};
		\node[state] (y) [right =of x] {$V_6$};
		\node[state] (w1) at ($(x)+(-.75,+1.5)$)   {$V_1$}; 
		\node[state] (w2) [right =of w1]    {$V_2$};  
		\node[state] (w3) [right =of w2]   {$V_3$};  
		\path[->]   (x) edge    (y);
		\path[<->]   (x) edge  [bend right]  (y);
		\path[->]   (w3) edge     (y);
		\path[->]   (w1) edge     (x);
		\path[<-]   (w1) edge     (w2);
		\path[<-]   (w3) edge     (w2);
		\end{tikzpicture}
	} 
	\caption{
	Acyclic directed mixed graphs for Example \ref{ex:prelim}.
}
	\label{fig:CIS example}
\end{figure}

\vsp\noindent \emph{Linear structural equation model:}
Consider an acyclic directed mixed graph $\g=(V,E)$, with nodes $V=(V_1,\dots,V_p)$ and edges $E$, where the nodes represent random variables. The random vector $V$ is generated from a linear structural equation model compatible with $\g$ if 
\begin{equation}
V \leftarrow \mathcal{A} V + \epsilon, \label{app:sem}
\end{equation}
such that the following three properties hold: First, $\mathcal{A}=(\alpha_{ij})$ is a matrix with $\alpha_{ij}=0$ for all $i,j $ where $V_j \rightarrow V_i \notin E$. Second, $\epsilon=(\epsilon_{v_1},\dots,\epsilon_{v_p})$ is a random vector of errors such that $E(\epsilon)=0$ and $\mathrm{cov}(\epsilon)=\Omega=(\omega_{ij})$ is a matrix with $\omega_{ij}=\omega_{ji}=0$ for all $i,j$ where $V_i \leftrightarrow V_j \notin E$. Third, for any two disjoint sets $V', V'' \subseteq V$ such that for all $V_i\in V'$ and all $V_j \in V''$, $V_i \leftrightarrow V_j \notin E$, the random vector $(\epsilon_{v_i})_{v_i \in v'}$ is independent of $ (\epsilon_{v_i})_{v_i \in v''}$. 

Given the matrices $\mathcal{A}$ and $\Omega$ it holds that 
\[
\Sigma_{vv}=(\mathrm{Id}-\mathcal{A})^{-\top} \Omega (\mathrm{Id}-\mathcal{A})^{-1}, 
\]
that is, the covariance matrix of a linear causal model is completely determined by the tuple of matrices $(\mathcal{A},\Omega)$.

\vsp\noindent \emph{Gaussian linear structural equation model:}
If the errors in a linear structural equation model are jointly normal we refer to the linear structural equation model as a Gaussian linear structural equation model. 
Such a model is completely determined by the tuple of matrices $(\mathcal{A},\Omega)$ which is why we will use $(\mathcal{A},\Omega)$ to denote the corresponding Gaussian linear structural equation model.

\vsp\noindent \emph{Latent projections and causal acyclic directed mixed graphs:} \citep[][]{richardson2017nested}
Let $\g$ be an acyclic directed mixed graph with node set $V$ and let $L \subset V$. We can use a tool called the latent projection \citep{richardson2003markov} to remove the nodes in $L$ from $\g$ while preserving all m-separation statements between subsets of $V \setminus L$. We use the notation $\g^{L}$ to denote the acyclic directed mixed graph with node set $V\setminus L$ that is the latent projection of $\g$ over $L$. The latent projection $\g^{L}$ of $\g$ over $L$ is an acyclic directed mixed graph with node set $V\setminus L$ and edges in accordance with the following rules: First, $\g^{L}$ contains a directed edge $W_i \rightarrow W_j$ if and only if there exists a causal path $W_i \rightarrow \dots \rightarrow W_j$ in $\g$ with all non-endpoint nodes in $L$. Second, $\g^{L}$ contains a bi-directed edge $W_i \leftrightarrow W_j$ if and only if there exists a path between $W_i$ and $W_j$, such that all non-endpoint nodes are in $L$ and non-colliders on $p$, and the edges adjacent to $W_i$ and $W_j$ have arrowheads pointing towards $W_i$ and $W_j$, respectively.

The following is an important property of the latent projection for acyclic directed mixed graphs: if $V$ is generated from a linear structural equation model compatible with the acyclic directed mixed graph $\g$ than $V\setminus L$ is generated from a linear structural equation model compatible with $\g^{L}$.

\vsp\noindent \emph{Total effects:}
We define the total effect of $X$ on $Y$ as 
	\[
	\tau_{yx}(x) = \frac{\partial}{\partial x} E\{Y\mid do(X=x)\}.
	\]
In a linear structural equation model the function $\tau_{yx}(x)$ is constant, which is why we simply write $\tau_{yx}$.
The path tracing rules by \citep{wright1934method} allow for the following alternative definition. Consider two nodes $X$ and $Y$ in an acyclic directed mixed graph $\g=(V,E)$ and suppose that $V$ is generated from a linear structural equation model compatible with $\g$. Then $\tau_{yx}$ is the sum over all causal paths from $X$ to $Y$ of the product of the edge coefficients along each such path. 

Given a set of random variables $W=\{W_1,\dots,W_k\}$ we also define $\tau_{wx}$ to be the stacked column vector with $i$th entry $\tau_{w_ix}$. Finally, let $\tau_{w_ix.w_{-i}}$ be the sum over all causal paths from $X$ to $W_i$ that do not contain nodes in $W_{-i}$ of the product of the edge coefficients along each such path.

\vsp\noindent \emph{Valid adjustment sets:} \citep{shpitser2010validity,perkovic16} 
We refer to $W$ as a valid adjustment set relative to $(X,Y)$ in $\g$, if $\beta_{yx.w}=\tau_{yx}$ for all linear structural equation models compatible with $\g$. The set $W$ is a valid adjustment set relative to $(X,Y)$ in $\g$ if and only if (i) $W \cap\f{\g}=\emptyset$ and (ii) $W$ blocks all non-causal paths from $X$ to $Y$.

\begin{example}[Linear structural equation models and total effects]
	\label{ex:prelim}
	Consider the graph $\g$ from Fig. \ref{subfig:CIS}. Then the following generating mechanism is an example of a linear structural equation model compatible with $\g$:
 \begin{align*}
 &V_1 \leftarrow \alpha_{12}V_2 + \epsilon_{v_1},\\
&V_2\leftarrow \epsilon_{v_2},\\
&V_3 \leftarrow \alpha_{32}V_2 + \epsilon_{v_3},\\
&V_4 \leftarrow \alpha_{41}V_1 + \epsilon_{v_4},\\
&V_5 \leftarrow \alpha_{54}V_4 +\epsilon_{v_5},\\
&V_6 \leftarrow \alpha_{63}V_3 + \alpha_{65}V_5 +\epsilon_{v_6},\\
&V_7 \leftarrow \alpha_{76}V_6+\epsilon_{v_7}
 \end{align*}
	with $(\epsilon_{v_1},\dots, \epsilon_{v_7}) \sim \mathcal{N}(0,\Omega)$, where $\Omega=(\omega_{ij})$ and $\omega_{ij}=0$ if $i\neq j$, $i\neq 4$ and $j\neq 6$, or $i\neq j, i\neq 6$ and $j\neq 4$.
	Suppose we are interested in the total effect of $V_4$ on $V_6$. The path $V_4 \rightarrow V_5 \rightarrow V_6$ is the only causal path between the two nodes and therefore $\tau_{v_6v_4}=\alpha_{54}\alpha_{65}$. The causal nodes are $\mathrm{cn}(V_4,V_6,\g)=\{V_5,V_6\}$ and the forbidden nodes are $\mathrm{forb}(V_4,V_6,\g)=\{V_4,V_5,V_6,V_7\}$.

 Since $\mathrm{forb}(V_4,V_6,\g)=\{V_4,V_5,V_6,V_7\}$, any valid adjustment set $W$ relative to $(X,Y)$ in $\g$ has to consist of nodes in $\{V_1,V_2,V_3\}$. The two non-causal paths from $V_4$ to $V_6$ in $\g$ are $V_4 \leftarrow V_1 \leftarrow V_2 \rightarrow V_3 \rightarrow V_6$ and $V_4 \leftrightarrow V_6$. The former path is blocked by $W$ if $W \subseteq \{V_1,V_2,V_3\}$ is non-empty, since all three nodes are non-colliders on the path. The latter path cannot be blocked by $W$ and as a result there is no valid adjustment set relative to $(V_4,V_6)$ in $\g$. If we remove the edge $V_4 \leftrightarrow V_6$ from $\g$, then any $W\subseteq \{V_1,V_2,V_3\}$ would be a valid adjustment set.

 Let $F=\mathrm{forb}(V_4,V_6,\g) \setminus \{V_4,V_6\}=\{V_5,V_7\}$. The acyclic directed mixed graph in Fig. \ref{subfig:latent} is the latent projection graph $\g^{F}$. Replacing the variables in $F$ with their generating equation we obtain the linear structural equation model
 \begin{align*}
 &V_1 \leftarrow \alpha_{12}V_2 + \epsilon_{v_1},\\
&V_2\leftarrow \epsilon_{v_2},\\
&V_3 \leftarrow \alpha_{32}V_2 + \epsilon_{v_3},\\
&V_4 \leftarrow \alpha_{41}V_1 + \epsilon_{v_4},\\
&V_6 \leftarrow \alpha_{64}V_3 + \alpha_{65}(\alpha_{54}V_4 +\epsilon_{v_5}) +\epsilon_{v_6},\\
 \end{align*}
	with $(\epsilon_{v_1},\dots, \epsilon_{v_6}) \sim \mathcal{N}(0,\Omega)$, where $\Omega=(\omega_{ij})$ and $\omega_{ij}=0$ if $i\neq j$, $i\neq 4$ and $j\neq 6$, or $i\neq j, i\neq 6$ and $j\neq 4$. Letting $\tilde{\alpha}_{64}=\alpha_{64} \alpha_{54}$ and $\tilde{\epsilon}_{6}=\alpha_{65}\epsilon_{5} + \epsilon_{6}$ we can rewrite this model as
 \begin{align*}
 &V_1 \leftarrow \alpha_{12}V_2 + \epsilon_{v_1},\\
&V_2\leftarrow \epsilon_{v_2},\\
&V_3 \leftarrow \alpha_{32}V_2 + \epsilon_{v_3},\\
&V_4 \leftarrow \alpha_{41}V_1 + \epsilon_{v_4},\\
&V_6 \leftarrow \alpha_{64}V_3 + \tilde{\alpha}_{54}V_4 +\tilde{\epsilon}_{v_6},\\
 \end{align*}
which is a linear structural equation model compatible with $\g^F$ from which $V\setminus F$ is generated.
	
\end{example}

	\subsection{Existing and preparatory results on covariance matrices}

	\begin{lemma}\citep[e.g.][]{henckel2019graphical}
		\label{lemma:conditional covariance matrix}
		Let $A,B$ and $C$ be mean $0$ random vectors with finite variance, with $B$ possibly of length zero. Then 
		\[
		\Sigma_{aa.bc} = \Sigma_{aa.b} - \beta_{ac.b} \Sigma_{cc.b} \beta^\top_{ac.b}.
		\]
	\end{lemma}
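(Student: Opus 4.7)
The plan is to reduce the identity to a standard Frisch--Waugh--Lovell type residualization argument. First I would dispatch the degenerate case in which $B$ has length zero: by the conventions in Section~\ref{prelimIV}, $\Sigma_{aa.c}=\Sigma_{aa}-\Sigma_{ac}\Sigma_{cc}^{-1}\Sigma_{ca}$ directly from the definition, and $\beta_{ac.\emptyset}=\beta_{ac}=\Sigma_{ac}\Sigma_{cc}^{-1}$, which immediately yields the claim. Thereafter I can assume $B$ has positive length and $\Sigma_{bb}$ is invertible.

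For the main case I would introduce the residuals $\tilde{A}=A-\beta_{ab}B$ and $\tilde{C}=C-\beta_{cb}B$, where $\beta_{ab}=\Sigma_{ab}\Sigma_{bb}^{-1}$ and $\beta_{cb}=\Sigma_{cb}\Sigma_{bb}^{-1}$. Direct computation gives
\begin{align*}
\mathrm{Cov}(\tilde{A},\tilde{A}) &= \Sigma_{aa.b}, \quad
\mathrm{Cov}(\tilde{C},\tilde{C}) = \Sigma_{cc.b}, \quad
\mathrm{Cov}(\tilde{A},\tilde{C}) = \Sigma_{ac.b},
\end{align*}
and each of $\tilde{A},\tilde{C}$ is uncorrelated with $B$ by construction. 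Because the span of $(B,C)$ equals the span of $(B,\tilde{C})$, projecting $A$ on $(B,C)$ is the same as projecting $\tilde A$ on $(B,\tilde C)$ up to the projection of $A$ onto $B$, and since $\tilde C\perp B$ the coefficient on $\tilde C$ in the joint regression is $\Sigma_{ac.b}\Sigma_{cc.b}^{-1}$. Equivalently, the partial regression coefficient satisfies $\beta_{ac.b}=\Sigma_{ac.b}\Sigma_{cc.b}^{-1}$.

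Combining these observations, the residual of $A$ after regressing on $(B,C)$ coincides with the residual of $\tilde A$ after regressing on $\tilde C$, so
\[
\Sigma_{aa.bc} \;=\; \mathrm{Cov}(\tilde A)-\beta_{ac.b}\,\mathrm{Cov}(\tilde C)\,\beta_{ac.b}^\top \;=\; \Sigma_{aa.b}-\beta_{ac.b}\,\Sigma_{cc.b}\,\beta_{ac.b}^\top,
\]
which is the desired identity.

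The only step that requires care is the identification $\beta_{ac.b}=\Sigma_{ac.b}\Sigma_{cc.b}^{-1}$, which is the one genuine computation in the argument. I would verify it either by the residualization reasoning above (noting that $\tilde C\perp B$ makes the joint normal equations block diagonal in $(B,\tilde C)$) or, alternatively, by block-inverting $\Sigma_{(b,c)(b,c)}$ via its Schur complement $\Sigma_{cc.b}$ and reading off the coefficient on $C$ in $\Sigma_{a,(b,c)}\Sigma_{(b,c)(b,c)}^{-1}$. Both routes are routine, so I anticipate no real obstacle beyond bookkeeping; the residualization route seems cleaner because it also makes the final substitution into the definition of $\Sigma_{aa.bc}$ transparent.
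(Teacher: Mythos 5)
Your argument is correct: the reduction to residuals $\tilde A = A-\beta_{ab}B$ and $\tilde C = C-\beta_{cb}B$, the block-diagonalization of the normal equations via $\tilde C\perp B$, and the identity $\beta_{ac.b}=\Sigma_{ac.b}\Sigma_{cc.b}^{-1}$ together give exactly the claimed decomposition. The paper itself offers no proof of this lemma — it is cited from \citet{henckel2019graphical}, where it is established by essentially the same standard Schur-complement/residualization computation — so there is nothing substantively different to compare; your write-up is a sound, self-contained version of that argument.
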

	
\begin{lemma}\citep[e.g.][]{henckel2019graphical}\label{lemma:Cochran}
    Let $A,B,C$ and $D$ be mean $0$ random vectors with finite variance, with $C$ possibly of length zero. 
	Then
	\[
	\beta_{ab.c} = 
	\beta_{ab.cd} 
	+ \beta_{ad.bc} \beta_{db.c}.  
	\]
\end{lemma}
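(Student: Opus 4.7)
The plan is to reduce the identity to the case where $C$ is empty via a Frisch--Waugh--Lovell style residualization, and then to derive the reduced identity directly from the population regression equation of $A$ on $B$ and $D$.

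\textbf{Step 1 (Reduction via residualization).} I introduce the population residuals $\tilde{A} = A - \beta_{ac} C$, $\tilde{B} = B - \beta_{bc} C$, and $\tilde{D} = D - \beta_{dc} C$. Using $\beta_{sc} = \Sigma_{sc} \Sigma_{cc}^{-1}$ and expanding, a direct computation gives $\mathrm{cov}(\tilde{S}, \tilde{T}) = \Sigma_{st.c}$ for any $S, T \in \{A, B, D\}$. An extension of Lemma~\ref{lemma:conditional covariance matrix} to cross-covariances, obtained by applying the lemma to the stacked vector $(A, B)^{\top}$ and reading off the off-diagonal block, yields
\[
\Sigma_{ab.cd} = \Sigma_{ab.c} - \Sigma_{ad.c} \Sigma_{dd.c}^{-1} \Sigma_{db.c},
\]
with the analogous formula for $\Sigma_{bb.cd}$. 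Combined with the identity $\beta_{st.w} = \Sigma_{st.w} \Sigma_{tt.w}^{-1}$ and applied symmetrically with $B$ and $D$ interchanged, this establishes the Frisch--Waugh--Lovell identities
\[
\beta_{ab.c} = \beta_{\tilde{a}\tilde{b}}, \quad \beta_{ab.cd} = \beta_{\tilde{a}\tilde{b}.\tilde{d}}, \quad \beta_{ad.bc} = \beta_{\tilde{a}\tilde{d}.\tilde{b}}, \quad \beta_{db.c} = \beta_{\tilde{d}\tilde{b}}.
\]
It therefore suffices to prove the unconditional identity $\beta_{ab} = \beta_{ab.d} + \beta_{ad.b}\,\beta_{db}$ with $\tilde{A}, \tilde{B}, \tilde{D}$ in place of $A, B, D$.

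\textbf{Step 2 (Unconditional identity).} For the reduced problem, I write the population regression equation
\[
A = \beta_{ab.d}\, B + \beta_{ad.b}\, D + U,
\]
where $U$ is uncorrelated with both $B$ and $D$ by construction. Taking the cross-covariance with $B$ on both sides gives $\Sigma_{ab} = \beta_{ab.d}\, \Sigma_{bb} + \beta_{ad.b}\, \Sigma_{db}$, and post-multiplying by $\Sigma_{bb}^{-1}$ yields $\beta_{ab} = \beta_{ab.d} + \beta_{ad.b}\,\beta_{db}$, which is the desired identity.

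The main hurdle is the dimensional bookkeeping in Step~1: carefully verifying that the Frisch--Waugh--Lovell identities hold for vectors $A$, $B$, and $D$ of arbitrary lengths. Once the cross-covariance extension of Lemma~\ref{lemma:conditional covariance matrix} is in hand, both steps become routine matrix manipulations, and Step~2 is essentially the textbook omitted-variable-bias formula.
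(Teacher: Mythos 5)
Your proof is correct. Note first that the paper does not actually prove this lemma: it is stated as a known result with a citation to Henckel et al.\ (2019), so there is no in-paper proof to compare against. Your two-step route --- residualizing $A,B,D$ on $C$ and using the Frisch--Waugh--Lovell identities to reduce to the case $C=\emptyset$, then reading the reduced identity off the population regression equation $A=\beta_{ab.d}B+\beta_{ad.b}D+U$ with $\mathrm{cov}(U,(B,D))=0$ --- is sound, and the cross-covariance extension of Lemma \ref{lemma:conditional covariance matrix} via the stacked vector $(A,B)$ is a clean way to get $\Sigma_{ab.cd}=\Sigma_{ab.c}-\Sigma_{ad.c}\Sigma_{dd.c}^{-1}\Sigma_{db.c}$. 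The standard proof (and the style the present paper itself uses, e.g.\ in Lemma \ref{lemma: degenerate}) is slightly more direct: substitute the regression equation $D=\beta_{db.c}B+\beta_{dc.b}C+\epsilon_{d.bc}$ into $A=\beta_{ab.cd}B+\beta_{ac.bd}C+\beta_{ad.bc}D+\epsilon_{a.bcd}$ and identify the coefficient of $B$ by uniqueness of the least-squares projection, which avoids the residualization step entirely; your version trades that for the (routine but nontrivial) dimensional bookkeeping of FWL. Two small points you lean on implicitly: the representation $\beta_{st.w}=\Sigma_{st.w}\Sigma_{tt.w}^{-1}$, which is standard but not established in the paper, and invertibility of the relevant covariance matrices (e.g.\ $\Sigma_{(b,d),(b,d).c}$), which the lemma tacitly assumes; neither affects correctness.
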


\begin{lemma} \citep{wermuth1989moderating}
Let $A,B,C$ and $D$ be mean $0$ Gaussian vectors, with $C$ possibly of length zero. 
	If $B \ci D \mid C$ or $A \ci D \mid B, C$, then
	$
	\beta_{ab.c} = \beta_{ab.cd}.
	$
	Furthermore, if $A \ci D\mid B , C$, then
	$
	\Sigma_{aa.bcd} = \Sigma_{aa.bc}.$

	\label{lemma:Wermuth}
\end{lemma}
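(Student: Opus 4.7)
The plan is to derive both parts of the lemma as immediate consequences of Lemma \ref{lemma:Cochran} and Lemma \ref{lemma:conditional covariance matrix}, combined with the standard fact that for mean-zero random vectors, conditional independence implies the vanishing of the corresponding partial regression coefficient (in particular, $U \ci V \mid W$ gives $\beta_{uv.w} = 0$, since the least-squares coefficient is built from the partial covariance matrix $\Sigma_{uv.w}$).

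For the first claim, I would start from Lemma \ref{lemma:Cochran} applied with the roles of $A, B, C, D$ exactly as stated:
\[
\beta_{ab.c} \;=\; \beta_{ab.cd} + \beta_{ad.bc}\, \beta_{db.c}.
\]
It then suffices to show that the cross term $\beta_{ad.bc}\, \beta_{db.c}$ vanishes under either hypothesis. If $B \ci D \mid C$, then $\beta_{db.c} = 0$. If instead $A \ci D \mid B, C$, then $\beta_{ad.bc} = 0$. In either case the second summand is zero and the desired identity $\beta_{ab.c} = \beta_{ab.cd}$ follows.

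For the second claim, I would apply Lemma \ref{lemma:conditional covariance matrix} with $B$ and $D$ playing the roles of the conditioning vectors:
\[
\Sigma_{aa.bcd} \;=\; \Sigma_{aa.bc} - \beta_{ad.bc}\, \Sigma_{dd.bc}\, \beta_{ad.bc}^\top.
\]
Under the stronger hypothesis $A \ci D \mid B, C$ we again have $\beta_{ad.bc} = 0$, so the subtracted term vanishes and $\Sigma_{aa.bcd} = \Sigma_{aa.bc}$.

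There is no genuine obstacle here; the only subtlety is justifying the step from conditional independence to a zero partial regression coefficient. In the Gaussian case this is immediate because $U \ci V \mid W$ is equivalent to $\Sigma_{uv.w} = 0$ and $\beta_{uv.w} = \Sigma_{uv.w}\Sigma_{vv.w}^{-1}$. In the non-Gaussian setting the statement is to be read with the population least-squares interpretation standard in linear-model contexts, under which the implication still holds, so the same computation goes through without change.
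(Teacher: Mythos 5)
The paper does not prove this lemma; it is imported from \citet{wermuth1989moderating} as a known result, so there is no in-paper proof to compare against. Your derivation --- Cochran's recursion (Lemma \ref{lemma:Cochran}) to kill the cross term $\beta_{ad.bc}\beta_{db.c}$ for the first claim, and the conditional-covariance recursion (Lemma \ref{lemma:conditional covariance matrix}) for the second --- is exactly the standard argument and is correct. The one point worth stating more carefully is the step you flag at the end: for arbitrary mean-zero, finite-variance, non-Gaussian vectors, $U \ci V \mid W$ does \emph{not} imply $\Sigma_{uv.w}=0$ (take $U$ and $V$ each a noisy copy of $W^2$ with $W$ symmetric: they are conditionally independent given $W$ yet have nonzero partial covariance, because the conditional expectations are nonlinear in $W$). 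So the lemma as literally stated needs $\ci$ to be read as vanishing partial covariance, or to be applied --- as this paper does --- only to independencies obtained from m-separation in a linear structural equation model, where the partial covariances are polynomial in $(\mathcal{A},\Omega)$ and therefore vanish exactly as in the Gaussian case. Your closing remark gestures at this correctly, so it is a matter of precision rather than a gap.
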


\begin{lemma}
	Consider an acyclic directed mixed graph $\g=(V,E)$ and let $(\mathcal{A},\Omega)$ be a Gaussian linear structural equation model compatible with $\g$ with $\Omega$ a strictly diagonally dominant matrix.
	Let $F\subseteq E$ and let $(\mathcal{A}_{F}(\epsilon),\Omega_{F}(\epsilon))$ denote the Gaussian linear structural equation model $(\mathcal{A},\Omega)$ where the edge coefficients and error covariances corresponding to the edges in $F$ are replaced with some value $\epsilon > 0$. Then
	\[\lim_{\epsilon \rightarrow 0}\Sigma_{ab.c}\{\mathcal{A}_{F}(\epsilon),\Omega_{F}(\epsilon)\} = \Sigma_{ab.c}\{\mathcal{A}_{F}(0),\Omega_{F}(0)\}\]  
	for any $A,B,C \subseteq V$.
	\label{lemma:convergence}
\end{lemma}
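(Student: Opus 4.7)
My plan is to prove the statement by showing that $\epsilon \mapsto \Sigma_{ab.c}\{\mathcal{A}_F(\epsilon),\Omega_F(\epsilon)\}$ is a continuous function of $\epsilon$ in a neighborhood of $0$; the claimed limit then follows immediately. I will do this in three steps by working with the formulas $\Sigma_{vv} = (I-\mathcal{A})^{-\top}\,\Omega\,(I-\mathcal{A})^{-1}$ and $\Sigma_{ab.c} = \Sigma_{ab} - \Sigma_{ac}\,\Sigma_{cc}^{-1}\,\Sigma_{cb}$.

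First, I would verify that $\epsilon \mapsto \Sigma_{vv}\{\mathcal{A}_F(\epsilon),\Omega_F(\epsilon)\}$ is continuous on all of $\mathbb{R}$. Because $\g$ is acyclic, there exists a topological ordering of $V$ under which $\mathcal{A}$, and hence $\mathcal{A}_F(\epsilon)$ for every $\epsilon$, is strictly triangular. Consequently $\det(I-\mathcal{A}_F(\epsilon)) = 1$ uniformly in $\epsilon$, so the map $\epsilon \mapsto (I-\mathcal{A}_F(\epsilon))^{-1}$ is well-defined and polynomial-in-entries, hence continuous. Since $\Omega_F(\epsilon)$ is also continuous (in fact affine) in $\epsilon$, the product $\Sigma_{vv}\{\mathcal{A}_F(\epsilon),\Omega_F(\epsilon)\}$ is continuous in $\epsilon$, and so is every submatrix $\Sigma_{ab}(\epsilon)$, $\Sigma_{ac}(\epsilon)$, $\Sigma_{cc}(\epsilon)$, $\Sigma_{cb}(\epsilon)$.

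The main (and really only) obstacle is to show that $\Sigma_{cc}\{\mathcal{A}_F(0),\Omega_F(0)\}$ is invertible, so that $\Sigma_{cc}(\epsilon)^{-1}$ exists and is continuous on a neighborhood of $0$. This is precisely where the strict diagonal dominance of $\Omega$ enters. The diagonal entries of $\Omega$ are error variances, hence positive, and the entries modified in passing to $\Omega_F(0)$ are only off-diagonal entries (bi-directed edges in $F$), which are set to zero. Therefore $\Omega_F(0)$ remains strictly diagonally dominant with positive diagonal, and hence positive definite. Since $(I-\mathcal{A}_F(0))^{-1}$ is invertible by the argument above, the congruence $\Sigma_{vv}\{\mathcal{A}_F(0),\Omega_F(0)\} = (I-\mathcal{A}_F(0))^{-\top}\,\Omega_F(0)\,(I-\mathcal{A}_F(0))^{-1}$ is positive definite, and so is any principal submatrix, in particular $\Sigma_{cc}(0)$.

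Having established invertibility at $\epsilon=0$, continuity of the determinant shows $\Sigma_{cc}(\epsilon)$ is invertible for all $\epsilon$ in some neighborhood of $0$, and on that neighborhood $\epsilon \mapsto \Sigma_{cc}(\epsilon)^{-1}$ is continuous. Plugging everything into $\Sigma_{ab.c}(\epsilon) = \Sigma_{ab}(\epsilon) - \Sigma_{ac}(\epsilon)\,\Sigma_{cc}(\epsilon)^{-1}\,\Sigma_{cb}(\epsilon)$ expresses $\Sigma_{ab.c}(\epsilon)$ as a continuous function of $\epsilon$ near $0$, which yields the stated limit. The case where $C$ is empty is handled identically (and is in fact simpler, as $\Sigma_{ab.c} = \Sigma_{ab}$ is already continuous by Step~1).
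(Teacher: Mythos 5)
Your proof is correct and follows essentially the same two-step route as the paper's: continuity of $\Sigma_{vv}$ via invertibility of $I-\mathcal{A}_F(0)$ from acyclicity, then invertibility of $\Sigma_{cc}(0)$ from strict diagonal dominance of $\Omega_F(0)$. If anything, your passage through positive definiteness is slightly more careful than the paper's, which jumps from invertibility of $\Sigma_{vv}(0)$ to invertibility of its principal submatrix without explicitly noting that this inference requires positive definiteness.
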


\begin{proof}
    The proof is based on the fact that as continuous functions in $\epsilon$, $\lim_{\epsilon \rightarrow 0}\mathcal{A}_F(\epsilon)=\mathcal{A}_F(0)$ and $\lim_{\epsilon \rightarrow 0}\Omega_F(\epsilon)=\Omega_F(0)$, and as we will show, $\Sigma_{ab.c}\{\mathcal{A}_F(\epsilon),\Omega_F(\epsilon)\}$ is a continuous function at $\{\mathcal{A}_F(0),\Omega_F(0)\}$. We show this in two steps by first showing that $\Sigma_{vv}\{\mathcal{A}_F(\epsilon),\Omega_F(\epsilon)\}$ is a continuous function at $\{\mathcal{A}_F(0),\Omega_F(0)\}$ and second showing that $\Sigma_{ab.c}(\Sigma_{vv})$ is a continuous function at $\Sigma_{vv}\{\mathcal{A}_F(0),\Omega_F(0)\}.$

    Regarding the first step,
	\begin{align*}
	\Sigma_{vv}(\mathcal{A},\Omega) = (\mathrm{Id}-\mathcal{A})^{-\top} \Omega (\mathrm{Id}-\mathcal{A})^{-1}
	\end{align*}
	is a continuous function in $\Omega$ and $(\mathrm{Id}-\mathcal{A})^{-1}$. It therefore remains to check that $\{\mathrm{Id}-\mathcal{A}_F(0)\}$ is invertible to show it is continuous at $\{\mathcal{A}_F(0),\Omega_F(0)\}$. To see this we us that $\{\mathcal{A}_F(0),\Omega_F(0)\}$ is compatible with the graph $\g'$, which is the graph $\g$ with the edges in $F$ removed. As $\g$ is acyclic, so is $\g'$ and as a result $\{\mathrm{Id}-\mathcal{A}_F(0)\}$ is invertible (see Neumann series). Therefore, 
	\begin{align*}
	\lim_{\epsilon \rightarrow 0}\Sigma_{vv}\{\mathcal{A}_F(\epsilon),\Omega_F(\epsilon)\} &= \lim_{\epsilon \rightarrow 0} \{\mathrm{Id}-\mathcal{A}_F(\epsilon)\}^{-\top} \Omega_F(\epsilon) \{\mathrm{Id}-\mathcal{A}_F(\epsilon)\}^{-1}\\
	&= \lim_{\epsilon \rightarrow 0} \{\mathrm{Id}-\mathcal{A}_F(\epsilon)\}^{-\top} \lim_{\epsilon \rightarrow 0} \Omega_F(\epsilon) \lim_{\epsilon \rightarrow 0} \{\mathrm{Id}-\mathcal{A}_F(\epsilon)\}^{-1}\\
	&= \{\mathrm{Id}-\mathcal{A}_F(0)\}^{-\top} \Omega_F(0) \{\mathrm{Id}-\mathcal{A}_F(0)\}^{-1}
	\end{align*}
	by continuity. 
 
 Regarding the second step, to show that
	\begin{align*} \Sigma_{ab.c}(\Sigma_{vv})
	&= \Sigma_{ab} - \Sigma_{ac} \Sigma^{-1}_{cc} \Sigma_{cb}
	\end{align*}
	is a continuous function at $\Sigma_{vv}\{\mathcal{A}_F(0),\Omega_F(0)\}$ we need to show that the minor $\Sigma_{cc}(\Sigma_{vv})$ is invertible at $\Sigma_{vv}\{\mathcal{A}_F(0),\Omega_F(0)\}$. We do so by showing that $\Sigma_{vv}\{\mathcal{A}_F(0),\Omega_F(0)\}$ is invertible. By the fact that $\Omega$ is a diagonally dominant matrix, so is $\Omega_F(0)$. Therefore, $\Omega_F(0)$ is invertible by the Levy-Desplanques theorem and as a result, $\Sigma_{vv}\{\mathcal{A}_F(0),\Omega_F(0)\}$ is invertible. 
\end{proof}

\section{Proofs for Section \ref{sec:prep results}}

\begin{theorem}
	Consider mutually disjoint nodes $X$ and $Y$, and node sets $Z$ and $W$ in an acyclic directed mixed graph $\g=(V,E)$. 
	Then $(Z,W)$ is a linearly valid conditional instrumental set relative to $(X,Y)$ in $\g$ if and only if 
	(i) $(Z \cup W) \cap \f{\g}=\emptyset$, (ii) $Z \not\perp_{\g} X \mid W$ and (iii) $Z \perp_{\tilde{\g}} Y \mid W$,
	where the graph $\tilde{\g}$ is $\g$ with all edges out of $X$ on causal paths from $X$ to $Y$ removed.
\end{theorem}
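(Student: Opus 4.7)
My plan is to first reduce validity to a covariance identity. By a Frisch--Waugh--Lovell argument, residualising both $S=(X,W)$ and $T=(Z,W)$ on $W$ shows that the first entry of the population two-stage least squares coefficient $\gamma_{ys.t}$ simplifies to
\[
(\gamma_{ys.t})_1 \;=\; \frac{\Sigma_{yz.w}\Sigma_{zz.w}^{-1}\Sigma_{zx.w}}{\Sigma_{xz.w}\Sigma_{zz.w}^{-1}\Sigma_{zx.w}}.
\]
Writing $\tilde{Y}=Y-\tau_{yx}X$, consistency $(\gamma_{ys.t})_1=\tau_{yx}$ is equivalent to $\Sigma_{\tilde{y}z.w}\Sigma_{zz.w}^{-1}\Sigma_{zx.w}=0$. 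A perturbation argument based on Lemma~\ref{lemma:convergence}---perturbing entries of $\Omega$ that affect $\Sigma_{\tilde{y}z.w}$ without nullifying $\Sigma_{zx.w}$---then shows that this bilinear identity holds for every LSEM compatible with $\g$ with $\Sigma_{xz.w}\neq 0$ if and only if $\Sigma_{\tilde{y}z.w}=0$ for every LSEM compatible with $\g$. Separately, Condition i) of Definition~\ref{definition:validCIS} is equivalent to $Z\not\perp_{\g} X\mid W$ by the existence of faithful compatible LSEMs, giving Condition (ii) of the theorem.

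\textbf{Graphical reformulation via $\tilde{\g}$.} The core step is to characterise graphically when $\Sigma_{\tilde{y}z.w}$ vanishes identically over LSEMs compatible with $\g$. I plan to use path tracing on the reduced form: writing
\[
\tilde{Y} \;=\; \sum_{U\in V}\bigl(\tau_{yu}-\tau_{yx}\tau_{xu}\bigr)\,\epsilon_U,
\]
the coefficient $\tau_{yu}-\tau_{yx}\tau_{xu}$ is exactly the sum, over directed paths from $U$ to $Y$ that avoid $X$, of products of edge coefficients, because any directed path from $U$ to $Y$ through $X$ factors uniquely as a directed $U$-to-$X$ path followed by a causal $X$-to-$Y$ path, so distributivity yields the cross-term $\tau_{yx}\tau_{xu}$. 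Since $\tilde{\g}$ removes precisely the edges out of $X$ on causal paths to $Y$, the directed paths from $U$ to $Y$ in $\tilde{\g}$ coincide with those in $\g$ avoiding $X$, so $\tau_{yu}-\tau_{yx}\tau_{xu}$ is the analogous coefficient computed in $\tilde{\g}$. Given an LSEM $(\mathcal{A},\Omega)$ compatible with $\g$, I construct the auxiliary LSEM $(\mathcal{A}',\Omega)$ compatible with $\tilde{\g}$ by zeroing the coefficients of the removed edges; the display then gives $\tilde{Y}^{\g}=Y^{\tilde{\g}}$ as random variables. Under Condition (i), every $N\in Z\cup W$ similarly satisfies $N^{\g}=N^{\tilde{\g}}$, because a directed path $j\to\cdots\to X\to U\to\cdots\to N$ using a removed edge $X\to U$ would force $U\in\cn{\g}$, making $N\in\de(U,\g)\subseteq\f{\g}$ and contradicting $(Z\cup W)\cap\f{\g}=\emptyset$. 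Hence $\Sigma_{\tilde{y}z.w}$ computed in the $\g$-model equals $\Sigma_{zy.w}$ computed in the auxiliary $\tilde{\g}$-model, and by the Markov property for $\tilde{\g}$, together with the existence of faithful $\tilde{\g}$-LSEMs (which are themselves $\g$-LSEMs with zero coefficients on removed edges), this vanishes for every compatible LSEM if and only if $Z\perp_{\tilde{\g}}Y\mid W$; this is Condition (iii).

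\textbf{Necessity of (i) and the main obstacle.} To finish the converse I would still need to rule out $(Z\cup W)\cap\f{\g}\neq\emptyset$. Suppose some $N\in (Z\cup W)\setminus\{X\}$ lies in $\de(\cn{\g},\g)$, and fix a causal node $C$ with a directed path to $N$. By choosing a compatible LSEM whose coefficients on this path, on the causal sub-path $X\to\cdots\to C$, and on a path from $Z$ to $X$ are all non-zero, the resulting model has $\Sigma_{xz.w}\neq 0$ yet $\Sigma_{\tilde{y}z.w}\neq 0$, because conditioning on $N$ (or using it as an instrument) opens a back-channel from $Z$ to $\tilde{Y}$ through $N$ that is blocked in $\tilde{\g}$; combined with Step~1 this violates consistency. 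The main technical obstacle is the path-tracing identification underlying the equality $\Sigma_{\tilde{y}z.w}^{\g}=\Sigma_{zy.w}^{\tilde{\g}}$ of the second paragraph: showing that every directed path in $\g$ to a non-forbidden node in $Z\cup W$ avoids the removed edges, which requires the careful forbidden-descendant argument above. A secondary subtlety is the genericity step in the first paragraph, which must be arranged so that perturbing $\Omega$ preserves $\Sigma_{xz.w}\neq 0$ while misaligning the vectors $\Sigma_{\tilde{y}z.w}$ and $\Sigma_{zz.w}^{-1}\Sigma_{zx.w}$.
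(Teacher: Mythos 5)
Your reduction of consistency to the vanishing of $\Sigma_{\tilde{y}z.w}\Sigma_{zz.w}^{-1}\Sigma_{zx.w}$, and your translation of ``$\Sigma_{\tilde{y}z.w}=0$ in every compatible model'' into the m-separation $Z\perp_{\tilde{\g}}Y\mid W$ via an auxiliary model obtained by zeroing the coefficients of the removed edges, are both sound and close in spirit to the paper's argument (the paper works with the full moment vector $\Sigma_{\epsilon t}$ and routes the graphical step through the forbidden projection and Lemmas~\ref{lemma:forb + trunc} and~\ref{lemma:Martin}, but the content is essentially yours). The two places where the proposal does not close are both in the necessity direction. First, the necessity of Condition (i) is dispatched in two sentences, whereas it is the hardest part of the paper's proof (Lemma~\ref{lemma: no forb}): one must exhibit a compatible model with $\Sigma_{\tilde{y}z.w}\neq 0$, and ``choosing the coefficients on a few designated paths to be non-zero'' does not achieve this, because $\Sigma_{\tilde{y}z.w}$ is a sum over many treks that can cancel. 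The paper's construction rewrites $\beta_{\tilde{y}z.w}=\beta_{yz.xw}+(\beta_{yx.zw}-\tau_{yx})\beta_{xz.w}$, prunes $\g$ so that $X$ has at most one edge into it, splits into cases according to whether a $W$-open path from $Z$ to $Y$ avoiding $X$ exists, and uses limiting models together with a polynomial non-vanishing argument to exclude cancellation; none of this is visible in your sketch.

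Second, the step you call a ``secondary subtlety'' is in fact an obstruction that perturbing $\Omega$ cannot remove. Since $\Sigma_{zz.w}^{-1}\Sigma_{zx.w}=\beta_{xz.w}^{\top}$, if some $Z_i\in Z$ satisfies $Z_i\perp_{\g}X\mid W\cup(Z\setminus Z_i)$, then the $i$-th entry of $\beta_{xz.w}$ vanishes in \emph{every} compatible model, the $i$-th entry of $\Sigma_{\tilde{y}z.w}$ never enters the bilinear form, and no choice of parameters can ``misalign'' the two vectors. Concretely, take $Z=\{Z_1,Z_2\}$, $W=\emptyset$ and the graph with edges $Z_2\rightarrow X$, $X\rightarrow Y$, $X\leftrightarrow Y$, $Z_1\rightarrow Y$: here $\sigma_{xz_1}=0$ identically, so the two-stage least squares estimator agrees asymptotically with the one based on $Z_2$ alone and converges to $\tau_{yx}$ in every compatible model with $\Sigma_{xz}\neq 0$, even though $Z\not\perp_{\tilde{\g}}Y$. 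Hence the implication ``validity $\Rightarrow$ (iii)'' cannot be obtained from your genericity argument for over-identified $Z$; and you cannot import the missing step from the paper, whose own necessity argument passes over the same point by asserting that the two-stage least squares limit is characterized by the full moment condition $\Sigma_{\epsilon t}=0$, which pins down only the projection of $\Sigma_{\tilde{y}z.w}$ onto $\beta_{xz.w}^{\top}$ when $Z$ is not a singleton.
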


\begin{proof}	

We first show that under the three conditions of Theorem \ref{theorem:graphical valid CIS}, there exists a linear structural equation model compatible with $\g$ such that $\Sigma_{xz.w} \neq 0$ and for any such model $\hat{\tau}_{yx}^{z.w}$ consistently estimates $\tau_{yx}$. By Condition (ii) of Theorem \ref{theorem:graphical valid CIS}, that is, $Z \not\perp_{\g} X \mid W$, it follows that there exist linear structural equation model compatible with $\g$ such that $\Sigma_{xz.w} \neq 0$.

We now show that for any model such that $\Sigma_{xz.w} \neq 0$, Condition (i) and (iii) of Theorem \ref{theorem:graphical valid CIS} ensure that $\hat{\tau}_{yx}^{z.w}$ converges in distribution to $\tau_{yx}$. To do so consider any model compatible with $\g$ such that $\Sigma_{xz.w} \neq 0$ and let $S=(X,W)$ and $T=(Z,W)$. Consider the whole vector valued two-stage least squares estimator $\hat{\gamma}_{ys.t}$, whose first entry defines our estimator $\hat{\tau}_{yx}^{z.w}$. Let $\gamma= (\tau_{yx},\beta_{\tilde{y}w})$, with $\tilde{Y}=Y-\tau_{yx}X$ and let $\epsilon=Y-\gamma S$. Then
	\begin{align*}
	\hat{\gamma}_{ys.t} - \gamma 
	&=   Y_n T_n^\top (T_n T_n^\top)^{-1}
	T_n S_n^\top 
	\{S_n T_n^\top (T_n T_n^\top)^{-1} T_n S_n^\top\}^{-1} - \gamma\\
	&=   \epsilon_n T_n^\top (T_n T_n^\top)^{-1} T_n S_n^\top 
	\{S_n T_n^\top (T_n T_n^\top)^{-1} T_n S_n^\top\}^{-1}\\
	&= \frac{1}{n} \epsilon_n T_n^\top \left(\frac{1}{n}T_n T_n^\top\right)^{-1}
	\frac{1}{n}T_n S_n^\top
    \left\{\frac{1}{n}S_n T_n^\top \left(\frac{1}{n}T_n T_n^\top \right)^{-1} \frac{1}{n} T_n S_n^\top \right\}^{-1}, 
	\end{align*}
	where $S_n,T_n$ and $\epsilon_n$ are the random matrices corresponding to taking $n$ i.i.d. observations from $S,T$ and $\epsilon$, respectively. By assumption $\Sigma_{xz.w} \neq 0$ and we can therefore conclude with Lemma \ref{lemma: cor neq 0} that $\Sigma_{st} \Sigma_{tt}^{-1} \Sigma_{st}^\top$ is invertible. With the continuous mapping theorem it follows that that the limit in probability of $(\hat{\gamma}_{ys.t} - \gamma)$ is $ \Sigma_{\epsilon t} \Sigma_{tt}^{-1} \Sigma_{ts} (\Sigma_{st} \Sigma_{tt}^{-1} \Sigma_{st}^\top)^{-1}$.
    It is thus sufficient to show that $\Sigma_{\epsilon t} = 0$, i.e., $\Sigma_{\epsilon z}=0$ and $\Sigma_{\epsilon w}=0$, to prove that $\hat{\gamma}_{ys.t}$ converges in probability to $\gamma$ which implies that $\hat{\tau}_{yx}^{z.w}$ converges to $\tau_{yx}$. 
	
	Consider first $\Sigma_{\epsilon w}$. Clearly, $\Sigma_{\epsilon w}=\mathrm{cov}(Y-\gamma S,W)=\mathrm{cov}(\tilde{Y} - \beta_{\tilde{y}w} W,W)=0$ by the fact that $\beta_{\tilde{y}w}=\Sigma_{\tilde{y}w}\Sigma^{-1}_{ww}$. Consider now $\Sigma_{\epsilon z}=\mathrm{cov}(Y-\tau_{yx} X - \beta_{\tilde{y}w} W,Z)=\Sigma_{\tilde{y}z}-\beta_{\tilde{y}w} \Sigma_{wz}=\Sigma_{\tilde{y}z.w}$.

 We now show that under Conditions (i) and (iii), $\tilde{Y} \ci Z \mid W$ for all linear structural equation models compatible with $\g$ before arguing why this suffices to conclude that $\Sigma_{\epsilon z}=0$. 
 Fix any linear structural equation model with edge coefficient matrix $\mathcal{A}$ and error covariance matrix $\Omega$ compatible with $\gamma$.
 By condition (i), $(Z \cup W) \cap \f{\g}=\emptyset$ and therefore $Z$ and $W$ are also nodes in the latent projection graph $\g^F$ over $F=\f{\g} \setminus \{X,Y\}$. By the fact that the latent projection preserves m-separation statements $Y \perp_{\tilde{\g}} Z \mid W$ implies $Y \perp_{\tilde{\g}^F} Z \mid W$. By Lemma \ref{lemma:forb + trunc}, $Y \perp_{\tilde{\g}^F} Z \mid W$ implies that $Y \perp_{\check{\g}^F} Z \mid W$, where $\check{\g}^F$ is the graph $\g^F$ with the edge $X \rightarrow Y$ removed. But $Y \perp_{\check{\g}^F} Z \mid W$ implies by Lemma \ref{lemma:Martin} that $\tilde{Y} \ci Z \mid W$. If the model we consider is Gaussian this suffices to conclude that $\Sigma_{\tilde{y}z.w}=0$. If the model is not Gaussian, on the other hand, then by the fact that $\Sigma_{\tilde{y}z.w}$ depends on the linear structural equation model only via the tuple $(\mathcal{A},\Omega)$ it follows that $\Sigma_{\tilde{y}z.w}$ must have the same value as in the Gaussian model, i.e., be $0$.
 Combined this shows that if the three conditions of Theorem \ref{theorem:graphical valid CIS} hold, then $\gamma_{ys.t}=(\tau_{yx}, \beta_{\tilde{y}w})$.

 It remains to show that the two conditions of Definition \ref{definition:validCIS} also imply Conditions (i), (ii) and (iii) of Theorem \ref{theorem:graphical valid CIS}. Condition (i) of Definition \ref{definition:validCIS} implies that $X \not\perp_{\g} Z \mid W$, which is Condition (ii) of Theorem \ref{theorem:graphical valid CIS}, by contraposition. We now show that if $\hat{\tau}_{yx}^{z.w}$ converges to $\tau_{yx}$ for all models compatible with $\g$, such that $\Sigma_{yx.z} \neq 0$ then Condition (i) and (iii) of Theorem \ref{theorem:graphical valid CIS} hold. 

 By the same argument as given in the first half of this proof, $\Sigma_{xz.w}\neq 0$ suffices to conclude that the estimator $\hat{\gamma}_{ys.w}$ converges to some unique limit $\gamma$, that is characterized by the fact that $\Sigma_{\epsilon t}=0$ with $\epsilon=Y - \gamma S$. We now show that if this limit vector $\gamma$ has $\tau_{yx}$ as the first entry for all models compatible with $\g$ such that $\Sigma_{xz.w}\neq 0$, Conditions (i) and (iii) of Theorem \ref{theorem:graphical valid CIS} must hold. We have shown already that if the first entry of $\gamma$ is $\tau_{yx}$ then the remaining entries have to form the vector $\beta_{\tilde{y}w}$ as otherwise $\Sigma_{\epsilon w} \neq 0$. We can therefore assume that $\gamma=(\tau_{yx},\beta_{\tilde{y}w})$. This implies as we have also shown previously that $\Sigma_{\tilde{y}z.w}=0$. For a Gaussian model this is only true if $\tilde{Y} \ci Z \mid W$. It therefore suffices to show that if $\tilde{Y} \ci Z \mid W$ for all linear structural equation models compatible with $\g$ then Conditions (i) and (iii) of Theorem \ref{theorem:graphical valid CIS} hold. We do so by contraposition.
 
 If Condition (i) of Theorem \ref{theorem:CIVcomparison} does not hold, there exists a Gaussian model violating that $\tilde{Y} \ci Z \mid W$ by Lemma \ref{lemma: no forb}. Suppose that Condition (i) holds but Condition (iii) does not. By Lemma \ref{lemma:forb + trunc}, Condition (i) holding and Condition (iii) not holding for $\tilde{\g}$ implies that $Y \not\perp_{\check{\g}^F} Z \mid W$. Therefore, we can invoke Lemma \ref{lemma:Martin} to conclude that there exists a Gaussian linear structural equation model such that $\tilde{Y} \notci Z \mid W$ in this case as well. We can therefore conclude that if $\tilde{Y} \ci Z \mid W$ for all linear structural equation model compatible $\g$ then Conditions (i) and (iii) of Theorem \ref{theorem:graphical valid CIS} hold.
\end{proof}

\begin{lemma}
	Consider two random variables $X$ and $Y$, and two random vectors $Z$ and $W$. Let $S=(X,W)$ and $T=(Z,W)$. Then $\Sigma_{st} \Sigma_{tt}^{-1} \Sigma_{st}^\top$ is invertible if and only if $\Sigma_{xz.w} \neq 0$.
	\label{lemma: cor neq 0}
\end{lemma}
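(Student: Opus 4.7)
The plan is to compute the determinant of $M := \Sigma_{st}\Sigma_{tt}^{-1}\Sigma_{st}^{\top}$ explicitly and reduce it to a scalar expression involving $\Sigma_{xz.w}$. The key observation is that $M$ can be interpreted as the population covariance matrix of the linear projection $\hat{S}$ of $S$ onto $T$. Since $W$ is a subvector of both $S$ and $T$, projecting $W$ onto $T$ simply returns $W$. Hence $\hat{S}=(\hat{X},W)$, where $\hat{X}$ denotes the population linear projection of $X$ onto $T=(Z,W)$, and
\[
M \;=\; \mathrm{cov}(\hat{S}) \;=\; \begin{pmatrix} \mathrm{var}(\hat X) & \mathrm{cov}(\hat X,W) \\ \mathrm{cov}(W,\hat X) & \Sigma_{ww} \end{pmatrix}.
\]
Note that $\Sigma_{ww}$ is invertible because $\Sigma_{tt}^{-1}$ appears in the statement, which forces $\Sigma_{tt}$ and hence its leading principal submatrix $\Sigma_{ww}$ to be invertible.

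First I would use the standard block-determinant formula to get $\det(M)=\det(\Sigma_{ww})\cdot \sigma_{\hat{x}\hat{x}.w}$, which reduces the question to whether the scalar $\sigma_{\hat{x}\hat{x}.w}$ is nonzero. Next, using that $X=\hat{X}+\varepsilon$ with $\varepsilon$ uncorrelated with $(Z,W)$ (in particular with $W$), one has $\mathrm{cov}(X,W)=\mathrm{cov}(\hat X,W)$ and $\mathrm{var}(X)=\mathrm{var}(\hat X)+\sigma_{xx.zw}$, so a short calculation gives $\sigma_{\hat{x}\hat{x}.w}=\sigma_{xx.w}-\sigma_{xx.zw}$.

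Finally I would apply Lemma~\ref{lemma:conditional covariance matrix} with $A=X$, $B=W$, $C=Z$ to rewrite this as
\[
\sigma_{xx.w}-\sigma_{xx.zw} \;=\; \beta_{xz.w}\,\Sigma_{zz.w}\,\beta_{xz.w}^{\top} \;=\; \Sigma_{xz.w}\,\Sigma_{zz.w}^{-1}\,\Sigma_{zx.w},
\]
where $\Sigma_{zz.w}$ is invertible for the same reason as $\Sigma_{ww}$. Combining these identities yields
\[
\det(M) \;=\; \det(\Sigma_{ww})\cdot \Sigma_{xz.w}\,\Sigma_{zz.w}^{-1}\,\Sigma_{zx.w}.
\]
Since $\Sigma_{zz.w}$ is positive definite, the scalar $\Sigma_{xz.w}\Sigma_{zz.w}^{-1}\Sigma_{zx.w}$ vanishes if and only if $\Sigma_{xz.w}=0$, so $M$ is invertible if and only if $\Sigma_{xz.w}\neq 0$, as claimed.

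The main things to handle carefully are the interpretation of $M$ as the covariance of $\hat{S}$ (which uses that $W$ projects to itself) and the bookkeeping that turns $\sigma_{\hat x\hat x.w}$ into $\sigma_{xx.w}-\sigma_{xx.zw}$; everything else is routine linear algebra.
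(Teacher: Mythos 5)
Your proof is correct, and it takes a recognizably different route from the paper's in its final step. Both arguments start from the same key identity, namely that $M=\Sigma_{st}\Sigma_{tt}^{-1}\Sigma_{st}^{\top}=\beta_{st}\Sigma_{tt}\beta_{st}^{\top}$ is the covariance matrix of the projection $\hat{S}=\beta_{st}T$, and both exploit the fact that $W$ projects to itself. The paper then finishes with a rank argument: it writes out the block structure of $\beta_{st}$, observes that $\beta_{st}$ has full row rank if and only if $\beta_{xz.w}=\Sigma_{xz.w}\Sigma_{zz.w}^{-1}\neq 0$, and invokes the fact that $\beta\Sigma\beta^{\top}$ is positive definite for positive definite $\Sigma$ exactly when $\beta$ has full row rank. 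You instead compute $\det(M)=\det(\Sigma_{ww})\,\sigma_{\hat{x}\hat{x}.w}$ via the Schur complement and identify $\sigma_{\hat{x}\hat{x}.w}=\sigma_{xx.w}-\sigma_{xx.zw}=\Sigma_{xz.w}\Sigma_{zz.w}^{-1}\Sigma_{zx.w}$, which vanishes precisely when $\Sigma_{xz.w}=0$. Your route is slightly more computational but has the pleasant by-product of showing that the obstruction to invertibility is exactly the conditional instrumental strength $\sigma_{xx.w}-\sigma_{xx.zw}$ from Theorem \ref{theorem:varequ} (compare Lemma \ref{lemma: variance difference}), which makes the connection to the asymptotic variance formula explicit; the paper's rank argument is shorter and avoids determinants. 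Two small points of care: $\Sigma_{ww}$ is a (trailing, not leading) principal submatrix of $\Sigma_{tt}$, and its invertibility follows because $\Sigma_{tt}$, being an invertible covariance matrix, is positive definite; and the projection bookkeeping tacitly uses mean-zero variables, which is consistent with the paper's conventions.
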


\begin{proof}
	Using $\beta_{st}=\Sigma_{st} \Sigma_{tt}^{-1}$ it follows that $\Sigma_{st} \Sigma_{tt}^{-1} \Sigma_{st}^\top=\beta_{st} \Sigma_{tt} \beta^\top_{st}$. As $S=(X,W)$ and $T=(Z,W)$ it follows that 
	\[
	\beta_{st} = 
	\left(
	\begin{array}{cc}
	\beta_{xz.w} &  \beta_{xw.z}  \\
	0 & Id
	\end{array}
	\right).
	\]
	Here we use that $\beta_{nn.u}=\beta_{nn}=1$ and $\beta_{nu.n}=0$ for any random variable $N$ and random vector $U$.
	As $\beta_{xz.w} = \Sigma_{xz.w} \Sigma^{-1}_{zz.w}$, it follows that $\beta_{st}$ is of full row rank if and only if $\Sigma_{xz.w} \neq 0$. As $\Sigma_{tt}$ is a positive-definite matrix, $\beta_{st} \Sigma_{tt} \beta^\top_{st}$ is positive-definite if and only if $\beta_{st}$ has full row rank. 
\end{proof}

\begin{lemma}
	Consider mutually disjoint nodes $X$ and $Y$, and node sets $Z$ and $W$ in an acyclic directed mixed graph $\g=(V,E)$ such that $(Z \cup W) \cap \f{\g} \neq \emptyset$ and $X \not\perp_{\g} Z\mid W$. Then there exists a linear structural equation model compatible with $\g$, such that
	\[\tilde{Y} \notci Z \mid W,\]
	where $\tilde{Y}=Y - \tau_{yx} X$.
	\label{lemma: no forb}
\end{lemma}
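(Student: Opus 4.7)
The plan is to exhibit a Gaussian linear structural equation model compatible with $\g$ under which $\tilde{Y} \notci Z \mid W$. First, since $X,Y,Z,W$ are mutually disjoint and $X\notin\de(\cn{\g},\g)$ by acyclicity, the hypothesis produces a node $F\in(Z\cup W)\cap\de(\cn{\g},\g)$ with $F\notin\{X,Y\}$. Pick a causal node $C\in\cn{\g}$ (so $C\neq X$ and $C$ lies on a causal path from $X$ to $Y$) with $F\in\de(C,\g)$, and fix causal paths from $X$ to $C$, from $C$ to $Y$, and from $C$ to $F$.

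The key observation is that the noise $\epsilon_C$ is jointly shared between $\tilde{Y}$ and $F$. By Wright's path-tracing rules, in any linear SEM compatible with $\g$ the coefficient of $\epsilon_C$ in $Y$ equals $\tau_{yc}$ (the sum over causal paths from $C$ to $Y$ of the product of edge coefficients) and the coefficient in $F$ equals the analogous expression $\tau_{fc}$; both are generically nonzero. Crucially, $\epsilon_C$ does not contribute to $X$, because acyclicity together with $C\in\de(X,\g)\setminus\{X\}$ forces $C\notin\an(X,\g)$, so $\tilde{Y}=Y-\tau_{yx}X$ retains the $\tau_{yc}\epsilon_C$ term. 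Since $\sigma_{\tilde{y}z.w}$ is a rational function of the SEM parameters $(\mathcal{A},\Omega)$, it suffices to exhibit one SEM on which it is nonzero, which I would do via a limit-and-perturb argument relying on Lemma~\ref{lemma:convergence}: start from a degenerate SEM in which only the edges on the fixed causal paths through $C$ carry nontrivial coefficients, compute the relevant covariance explicitly via path tracing, and reintroduce the remaining edges as small perturbations without losing non-vanishing.

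The case $F\in Z$ is the straightforward one: the $F$-entry of the vector $\sigma_{\tilde{y}z.w}$ equals $\sigma_{\tilde{y}f.w}$, and in the degenerate SEM this reduces to an explicit expression in $\tau_{yc}$, $\tau_{fc}$ and $\mathrm{var}(\epsilon_C)$ modified by the ordinary adjustment for $W$, which stays nonzero for generic coefficients. The main obstacle is the case $F\in W\setminus Z$, where the dependence of $\tilde{Y}$ on $Z$ must be \emph{activated} by conditioning on $F$ rather than propagated directly. Here I would use $Z\not\perp_{\g} X\mid W$ to pick $Z_i\in Z$ with an open walk from $X$ to $Z_i$ given $W$, then splice the walk $Y\leftarrow\cdots\leftarrow C\to\cdots\to F$ with this walk at $F$, exploiting that $F\in W$ opens $F$ as a collider for suitably chosen segments. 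The delicate step is verifying that the other members of $W$ do not simultaneously close this combined walk; again I would shrink the perturbation so that only the chosen paths contribute and then invoke Lemma~\ref{lemma:convergence} to lift the non-vanishing to a nearby SEM genuinely compatible with $\g$.
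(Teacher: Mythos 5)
There is a genuine gap, and it sits exactly where you declare the argument ``straightforward.'' In the case $F\in Z$, the shared-noise mechanism through $\epsilon_C$ can be \emph{exactly} cancelled by the adjustment for $W$: if every causal path from $C$ to $F$ passes through $W$, then in your degenerate SEM --- which keeps only the causal paths through $C$ --- the node of $W$ on the $C$-to-$F$ path d-separates $F$ from $\tilde{Y}$, so $\sigma_{\tilde{y}f.w}=0$ identically, and Lemma~\ref{lemma:convergence} cannot lift a zero limit to a nonzero value. Concretely, take $X\to C\to Y$, $C\to M\to F$, $X\leftrightarrow F$, with $Z=\{F\}$ and $W=\{M\}$: all hypotheses of the lemma hold, and one computes that in your degenerate model $\sigma_{\tilde{y}f.m}=\alpha_{fm}\alpha_{mc}\alpha_{yc}\omega_{cc}-\alpha_{mc}\alpha_{yc}\omega_{cc}\sigma_{mm}^{-1}\alpha_{fm}\sigma_{mm}=0$ exactly, while restoring $\omega_{xf}\neq 0$ gives $\sigma_{\tilde{y}f.m}=-\alpha_{mc}^{2}\alpha_{yc}\alpha_{cx}\omega_{cc}\omega_{xf}\sigma_{mm}^{-1}\neq 0$. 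The dependence is created entirely by the edge witnessing $Z\not\perp_{\g}X\mid W$, a hypothesis your $F\in Z$ argument never uses and whose corresponding edges your degenerate model discards. The same omission affects the case $F\in W$: to continue a walk through $F$ as a collider you need a second arrowhead at $F$, which in general must come from retracing the causal path back toward $\cn{\g}$, descending to $X$, and exiting along the open $X$--$Z$ walk; verifying that the retraced forbidden/causal nodes and the nodes of that walk are not blocking non-colliders in $W$ is precisely the ``delicate step'' you defer, and it is where essentially all of the work lies.

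For contrast, the paper avoids localizing the dependence at a single forbidden node. It first derives the identity $\beta_{\tilde{y}z.w}=\beta_{yz.xw}+(\beta_{yx.zw}-\tau_{yx})\beta_{xz.w}$, then prunes $\g$ to a graph $\g''$ retaining a normalized open $Z$--$X$ path, its collider-descendant paths into $W$, and the causal paths from $X$ into $\f{\g}$. The case split is on whether $Z$ and $Y$ remain connected given $W$ avoiding $X$ in $\g''$: if so, shrinking the edges at $X$ kills the second term while faithfulness keeps $\beta_{yz.xw}$ bounded away from zero; if not, the first term vanishes identically and the second is generically nonzero because $Z\cup W$ meets $\f{\g}$ and hence is not a valid adjustment set. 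The forbidden-node hypothesis thus enters through the characterization of valid adjustment sets rather than through a single shared error term, which is what controls the cancellations that defeat your construction. Your general toolkit (degenerate model, genericity, Lemma~\ref{lemma:convergence}) is the right one, but the degenerate model must retain the structures witnessing $Z\not\perp_{\g}X\mid W$ in both of your cases for the argument to close.
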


\begin{proof}
	The statement is void for $\f{\g}\subseteq \{X,Y\}$. As $Y \notin \de(X,\g)$ implies that $\f{\g}=X$ we can therefore assume that $Y \in \de(X,\g)$. We now construct a Gaussian linear structural equation model in which $\beta_{\tilde{y}z.w} \neq 0$. This suffices to prove our claim since $\beta_{\tilde{y}z.w} \neq 0$ if and only if $\Sigma_{\tilde{y}z.w} \neq 0$ and we consider Gaussian linear structural equation models. To do so we first derive an equation for $\beta_{\tilde{y}z.w}$ that does not depend on $\tilde{Y}$. 
	
	Consider any Gaussian linear structural equation model compatible with $\g$. We augment $\g$ as well as the underlying linear structural equation model by adding $\tilde{Y}$ to $V$ as well as the edges $X \rightarrow \tilde{Y}$ and $Y \rightarrow \tilde{Y}$ with edge coefficients $-\tau_{yx}$ and $1$, respectively, to $E$. We denote the augmented graph with $\g'$. Clearly the augmented model is still a linear structural equation model.
In particular
	 \begingroup\leqnos
	 \begin{align}
	 & \beta_{\tilde{y}z.w} = \beta_{\tilde{y}z.xw} + \beta_{\tilde{y}x.zw}\beta_{xz.w}, \tag{i} \\
	 & \beta_{\tilde{y}z.xw} = \beta_{\tilde{y}z.xyw} + \beta_{\tilde{y}y.xzw}\beta_{yz.xw}  = \beta_{yz.xw} \textrm{ and} \tag{ii} \\
	  & \beta_{\tilde{y}x.zw}=\beta_{\tilde{y}x.yzw} +\beta_{\tilde{y}y.xzw}\beta_{yx.zw}= -\tau_{yx} + \beta_{yx.zw}, \tag{iii}	     
	 \end{align} \endgroup
 where we use that $\pa(\tilde{Y},\g')=\{X,Y\},\de(\tilde{Y},\g')=\{\tilde{Y}\}$ and that as a result $Z\cup W$ is an adjustment set relative to $(\{X,Y\},\tilde{Y})$ since there are no forbidden nodes and no non-causal paths relative to $(\{X,Y\},\tilde{Y})$ along with Lemma \ref{lemma:Cochran}. Plugging the latter two equations into the first one we obtain
  \[
 \beta_{\tilde{y}z.w} = \beta_{yz.xw} + (\beta_{yx.zw} -\tau_{yx}) \beta_{xz.w},
 \]
where the right hand side does not depend in $\tilde{Y}$.

In the remainder of this proof we will show that there exists a linear structural equation model compatible with $\g$, such that 
 \[
 \beta_{yz.xw} + (\beta_{yx.zw} -\tau_{yx}) \beta_{xz.w} \neq 0.
 \]
We now construct a graph $\g''$ where we drop some edges from $\g$ to ensure additional structure required for our proof. As any linear structural equation model compatible with $\g''$ is also compatible with $\g$ it suffices to show that there exists a linear structural equation model compatible with $\g''$, such that 
 \[
 \beta_{yz.xw} + (\beta_{yx.zw} -\tau_{yx}) \beta_{xz.w} \neq 0
 \]
 to complete our proof.

To construct $\g''$, consider a proper path $p$ from $Z$ to $X$ open given $W$ in $\g$. Such a path exists by assumption. For any such path $p$ if it contains a collider $C$, then $\de(C,\g) \cap W \neq \emptyset$, that is, there exists a causal path from $C$ to some node in $W_c \in W$. Let $q_c$ denote a shortest such path. This ensures that $q_c$ only contains one node in $W$. Starting from $p$ we now construct a proper path $p'$ from $Z$ to $X$ that is open given $W$ in $\g$ and for any collider $C$ on $p'$, $q_c$ intersects with $p'$ only at $C$.

If $p$ has this property already we are done, so suppose it does not and let $C$ be the collider closest to $Z$ on $p$ such that $q_c$ and $p$ intersect at some node that is not $C$. We will now construct a path $p'$ that is a proper path from $Z$ to $X$ open given $W$, with $C$ not a collider on $p'$ and for any collider $C'$ on $p'$ that is not a collider on $p$, $q_{c'}$ intersects with $p'$ only at $C'$. By iterative application of this construction we can therefore obtain a path with the required no-intersection but at the colliders property. 

To construct this path, let $I_1\dots,I_k$ be all nodes on $p$ where $p$ and $q_c$ intersect, ordered by how close to $Z$ they appear on $p$. By assumption there are at least two such nodes. Let $p'=p(Z,I_1) \oplus q_c(I_1,I_k) \oplus p(I_k,X)$. We assume for simplicity that $q_c$ points from $I_1$ to $I_k$. The other case follows by the same arguments.

We first show that $p'$ is open given $W$. The paths $p$ and $q_c$ are open given $W$. We therefore only need to consider the intersection points $I_1$ and $I_k$. By assumption, $I_1$ may not be a collider on $p'$ and $I_1 \notin W$ by the fact that the only node in $W$ on $q_c$ is the endpoint node $W_c$. The node $I_k$, on the other hand, may or may not be a collider on $p'$ and may or may not be $W_c \in W$. If $I_k = W_c$, then by the fact that $p$ is open given $W$, $W_c$ must be a collider on $p$. Because $q$ points towards $W_c$, this implies that $W_c$ is also a collider on $p'$ and therefore $p'$ is also open given $W$. If $I_k \neq W$ and it is a collider on $p'$, then by the existence of $q_c(I_k,W_c)$, $\de(I_1,\g) \cap W \neq \emptyset$ and $p'$ is open given $W$. If $I_k \neq W_c$ is a non-collider, then again $p'$ is open. We can therefore conclude that $p'$ is open given $W$.

We now show that $p'$ has at least one less collider violating the no-intersection but at the collider property. The path $p'$ either does not contain $C$ or $C=I_1$. In either case $C$ is not a collider on $p$. Further, the only potential collider on $p'$ that is not necessarily also a collider on $p$, is $I_k$. But the path $q(I_k,W_c)$ does not intersect with $p'$ but at $I_k$. 

While $p'$ may or may not be a proper path from $Z$ to $X$, it must contain a proper subpath that inherits the other two properties we require from $p'$, so let $p''$ denote this subpath. The path $p''$ is a proper path from $Z$ to $X$ that is open given $W$ with $C$ not a collider on $p''$ and no new collider $C'$ such that $q_{c'}$ intersects with $p''$ at a node that is not $C'$.

We can therefore let $p$ be a proper path from $Z$ to $X$ open given $W$, such that for all colliders $C$ on $p$ there exists a causal path $q_c$ from $C$ to $W$, such that $p$ and $q_c$ intersect only at $C$. Let $q_1,\dots,q_l$ a collection of such casual paths corresponding to all colliders on $p$. Set all edge coefficients and error covariances not corresponding to edges either on $p,q_1,\dots,q_l$ or on causal paths from $X$ to nodes in $\f{\g}$ to $0$. Consider the graph $\g''$ with the $0$ coefficient edges dropped. 
The graph $\g''$ has the following properties: $\f{\g''}=\f{\g}$, there exists at most one edge (stemming from $p$) into $X$ in $\g''$ and $p$ is also open given $W$ in $\g''$.

We now construct a linear SEM compatible with $\g''$ such that 
 \[
 \beta_{yz.xw} + (\beta_{yx.zw} -\tau_{yx}) \beta_{xz.w} \neq 0,
 \]
where we proceed differently in the following cases. First, we assume there exists a path from $Z$ to $Y$ in $\g''$ that is open given $W$ and that does not contain $X$. Second, we assume that no such path exists.
 
	 First, suppose that there exists a path $k$ from $Z$ to $Y$ in $\g''$ that is open given $W$ and that does not contain $X$. We show that there then exists a walk from $Z$ to $Y$ open given $W$ that does not contain $X$, where we recall that the definitions of an open path and open walk differ. We first show by contradiction that for any collider $C$ on $k$, $X\notin \de(C,\g'')$, irrespective of whether $k$ is open or not. So suppose there exists a collider $C$ such that $X\in \de(C,\g'')$ and consider the causal path $k'$ from $C$ to $X$. The path $k'$ consists of nodes in $\an(X,\g)$ and therefore must consist of edges on $p$ and the $q_i$'s by construction of $\g''$. As $k'$ ends with an edge into $X$, the only possible such edge in $\g''$ lies specifically on $p$. Further the node adjacent to $X$ may not be a collider on $p$ and hence may not lie on any of the paths $q_i$. Therefore the next edge on $k'$ is also on $p$. By iterative application we can conclude the same for all other edges on $k'$ and thus $k'$ is a subpath of $p$. But then $C$ cannot be a collider on $p$ and the same argument therefore holds for the two edges into $C$ on $k$, but as $p$ is a path it cannot contain three edges adjacent to $C$ and we get a contradiction. Therefore, we have $X\notin \de(C,\g)$ for any collider $C$ on $k$. As $k$ is open given $W$ it holds that $W \cap \de(C,\g'') \neq \emptyset$ for any collider $C$. Combining $k$ with the causal paths from any collider $C$ to the node in $W$ and back we obtain a walk in $\g''$ from $Z$ to $Y$ that is open given $W$ and that does not contain $X$. We can therefore conclude that a walk from $Z$ to $Y$ that is open given $W$ and does not contain $X$ exists. 

  We now use the existence of this walk to construct a Gaussian linear structural equation such that  
  \[
 \beta_{yz.xw} + (\beta_{yx.zw} -\tau_{yx}) \beta_{xz.w} \neq 0
 \]
  Consider a Gaussian linear structural equation model $(\mathcal{A},\Omega)$ compatible with and faithful to $\g''$, with a diagonally dominant $\Omega$. Set all edge coefficients and error covariances for the edges adjacent to $X$ to some value $\epsilon >0$. Then as $\epsilon$ goes to $0$ so do $\beta_{yx.zw}$ and $\beta_{xz.w}$ by Lemma \ref{lemma:convergence} and $\tau_{yx}$ by Wright's path tracing rule. This is not the case for $\beta_{yz.xw}$ as by the walk we assume to exist, $Y \not\perp_{\g'''} Z \mid W$, where $\g'''$ is the graph $\g''$ with the edges adjacent to $X$ dropped. Choosing $\epsilon$ sufficiently small we therefore obtain Gaussian linear structural equation such that  
  \[
 \beta_{yz.xw} + (\beta_{yx.zw} -\tau_{yx}) \beta_{xz.w} \neq 0.
 \]
	 
We now suppose that all paths from $Z$ to $Y$ that are open given $W$ contain $X$. As there exists only one edge into $X$ in $\g''$, $X$ must be a non-collider on any such path and they are therefore blocked given $W$ and $X$. Further, by the fact that $X\notin\de(C,\g'')$ for all colliders $C$ on any path from $Z$ to $Y$, as shown previously, the addition of $X$ to the conditioning set also does not open any new paths from $Z$ to $Y$. It thus follows that $Z \perp_{\g''} Y \mid X \cup W$ and we can conclude that $\beta_{yz.xw}=0$ for all models compatible with $\g''$. The fact that $Z \perp_{\g''} Y \mid X \cup W$ also implies jointly with \ref{lemma:Cochran} that $\beta_{yx.zw}=\beta_{yx.w}$ and jointly with $(Z\cup W) \cap \f{\g} \neq \emptyset$ that $W \cap \f{\g''} = \emptyset$. The latter is true because for any node $F \in \f{\g}$ there exists a path of the form $F \leftarrow \dots \leftarrow C \rightarrow \dots \rightarrow Y$, with possibly $F=C \in \cn{\g''}$. This path consists entirely of nodes in $\f{\g}$ and is only blocked by $W$ if one of the nodes on $p$ is in $W$. It therefore suffices to construct a Gaussian linear structural equation model compatible with $\g''$ such that $\beta_{yx.w} \neq \tau_{yx}$ and $\beta_{xz.w} \neq 0$ to conclude our claim. By Theorem 57 of \citet[][]{perkovic16}, $\beta_{yx.zw} = \tau_{yx}$ for all Gaussian linear structural equation model compatible with $\g''$ if and only if $W \cup Z$ is a valid adjustment set relative to $(X,Y)$ in $\g''$. Since $W \cap \f{\g} \neq \emptyset$ this is not the case and therefore there exists a model such that $\beta_{yx.zw} \neq \tau_{yx}$. This implies that the equation $f(\mathcal{A},\Omega)=\beta_{yx.w}(\mathcal{A},\Omega) - \tau_{yx}(\mathcal{A},\Omega)$ is non-trivial in the non-zero entries of the tuple $(\mathcal{A},\Omega)$. By the same arguments as given in the proof of Lemma 3.2.1 from \citet{spirtes2000causation} the function $\beta_{yx.w} - \tau_{yx}$ is equivalent to a polynomial in the non-zero entries of $(\mathcal{A},\Omega)$. If we generate the entries according a distribution $P$ absolutely continuous with respect to the Lebesgue measure it follows that $P$-a.s. $\beta_{yx.w} - \tau_{yx} \neq 0$. Similarly, since by the construction of $\g''$, $X \not\perp_{\g''} Z \mid W$ it follows that $P$-a.s. $\beta_{xz.w} \neq 0$. Jointly this implies that $P$-a.s., $(\beta_{yx.w}-\tau_{yx}) \beta_{xz.w} \neq 0$ which concludes our proof.
	\end{proof}

\begin{lemma}
	Consider two nodes $X$ and $Y$, and two node sets $Z$ and $W$ in an acyclic directed mixed graph $\mathcal{G}=(V,E)$ such that $\f{\g} \setminus \{X,Y\}=\emptyset$ and $V$ is generated from a linear structural equation model compatible with $\mathcal{G}$. Let $\tilde{Y} = Y - \tau_{yx} X$. 
	Then $Z \perp_{\tilde{\mathcal{G}}} Y \mid W$ implies $Z \ci \tilde{Y} \mid W$, where $\tilde{\mathcal{G}}$ is the graph $\mathcal{G}$ with the edge $X \rightarrow Y$ removed. Further, if $Z \not\perp_{\tilde{\mathcal{G}}} Y \mid W$ there exists a linear structural equation model compatible with $\g$ such that $Z \notci \tilde{Y} \mid W$.
	
	\label{lemma:Martin}
\end{lemma}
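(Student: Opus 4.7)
The plan is to realize $\tilde{Y}$ as the outcome variable of an auxiliary linear structural equation model compatible with $\tilde{\g}$ rather than $\g$, so that both directions reduce to applying either the Markov or the faithfulness property of that auxiliary model to the m-separation statement in $\tilde{\g}$.

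As a preparatory observation, the hypothesis $\f{\g}\setminus\{X,Y\}=\emptyset$ forces $\cn{\g}\subseteq\{Y\}$: any non-trivial causal path $X\to V_1\to\cdots\to V_k\to Y$ would place the intermediate nodes in $\cn{\g}\subseteq\f{\g}$, a contradiction. Hence the only causal path from $X$ to $Y$, if one exists, is the edge $X\to Y$, and by Wright's path tracing $\tau_{yx}=\alpha_{yx}$, the coefficient of that edge (taken to be $0$ if the edge is absent). The same hypothesis gives $\de(Y,\g)=\{Y\}$, so $Y$ has no children in $\g$ and the column of $\mathcal{A}$ indexed by $Y$ is identically zero.

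Given any linear structural equation model $\mathcal{M}$ compatible with $\g$, with coefficient matrix $\mathcal{A}$ and error vector $\epsilon$, I construct an auxiliary model $\mathcal{M}'$ compatible with $\tilde{\g}$ by setting the entry $\alpha_{yx}$ of $\mathcal{A}$ to $0$ and leaving $\epsilon$ and every other coefficient unchanged. Because no equation other than the $Y$-equation involves $Y$, the two systems decouple identically, and the random vectors they generate satisfy $V'_{i}=V_{i}$ for all $V_{i}\neq Y$ together with
\[
V'_{Y}=\sum_{V_{j}\in\pa(Y,\g)\setminus\{X\}}\alpha_{yj}V_{j}+\epsilon_{Y}=V_{Y}-\alpha_{yx}V_{X}=\tilde{Y}.
\]

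For the forward implication I invoke the global Markov property of linear structural equation models with respect to the acyclic directed mixed graph they are compatible with \citep{richardson2003markov}, applied to $\mathcal{M}'$ and $\tilde{\g}$: the assumption $Z\perp_{\tilde{\g}}Y\mid W$ yields $V'_{Z}\ci V'_{Y}\mid V'_{W}$ in $\mathcal{M}'$, which by the identifications above is precisely $Z\ci\tilde{Y}\mid W$ in $\mathcal{M}$. For the converse I instead choose $\mathcal{M}$ to be a Gaussian linear structural equation model whose coefficients and error covariances other than $\alpha_{yx}$ are drawn generically from an absolutely continuous distribution, so that the induced $\mathcal{M}'$ is faithful to $\tilde{\g}$; as in the proof of Lemma \ref{lemma: no forb}, the non-faithful parameter values are cut out by a non-trivial polynomial and hence form a Lebesgue null set. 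Faithfulness of $\mathcal{M}'$ then converts $Z\not\perp_{\tilde{\g}}Y\mid W$ into $V'_{Z}\notci V'_{Y}\mid V'_{W}$, that is, into $Z\notci\tilde{Y}\mid W$ in $\mathcal{M}$. The step most worth emphasizing is the decoupling identity $V'_{i}=V_{i}$ for $V_{i}\neq Y$, which is where $\de(Y,\g)=\{Y\}$ is essential; once this is established, both directions are immediate.
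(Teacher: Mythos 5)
Your proof is correct and follows essentially the same route as the paper's: identify $\tau_{yx}$ with $\alpha_{yx}$, realize $\tilde{Y}$ as the $Y$-variable of the model obtained by zeroing that coefficient (which is compatible with $\tilde{\g}$), and then apply the Markov property for one direction and a generically chosen faithful parameterization for the other. The only nitpick is that $\f{\g}\setminus\{X,Y\}=\emptyset$ yields $\de(Y,\g)=\{Y\}$ only when $Y\in\de(X,\g)$; in the complementary case $\tilde{Y}=Y$ and $\tilde{\g}=\g$, so your construction degenerates to the identity and the conclusion still holds, exactly as in the paper's explicit case split.
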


\begin{proof}
Consider first the special case that $\f{\g}=\{X\}$, i.e., $Y \notin \de(X,\g)$. In this case $\tilde{Y}=Y$ and $\tilde{\g}=\g$. Therefore the claims follow from the fact that a linear structural equation model is Markov to the graph it is generated according to and that there exists a linear structural equation model faithful to $\g$.

Suppose for the remainder of the proof that $\f{\g} =\{X,Y\}$.
We first prove that $Z \perp_{\tilde{\mathcal{G}}} Y \mid W$ implies $Z \ci \tilde{Y} \mid W$.
	By the assumption that $\f{\g}=\{X,Y\}$, 
	\[
	\tilde{Y}=\sum_{V_i \in \pa(Y,\mathcal{G})} \alpha_{yv_i} V_i + \epsilon_y -\tau_{yx} X=\sum_{V_i \in \pa(Y,\mathcal{\tilde{G}})} \alpha_{yv_i} V_i + \epsilon_y, 
	\]
	as $\tau_{yx}=\alpha_{yx}$ and $\pa(Y,\mathcal{G}) = \{X\}\cup \pa(Y,\mathcal{\tilde{G}})$, where we use that the only causal path between $X$ and $Y$ is the edge $X \rightarrow Y$ and Wright's path tracing rule. As $\de(Y,\mathcal{\tilde{G}})=\{Y\}$ this suffices to show that the distribution of $\tilde{V}$, which is $V$ with $Y$ replaced by $\tilde{Y}$, is generated according to a linear structural equation model compatible with $\mathcal{\tilde{G}}$. By the fact that linear structural equation models are Markov with respect to the graphs they are compatible with, the claim follows.

  For the converse claim we just need to choose a model that is faithful to $\g$ and after setting $\alpha_{yx}=0$, is faithful to $\tilde{\g}$.
\end{proof}

\begin{lemma}

Consider nodes $X$ and $Y$ in an acyclic directed mixed graph $\g$. Let $F =\f{\g}\setminus \{X,Y\}$ and let $\tilde{\g}$ denote the graph $\g$ with all edges out of $X$ on causal paths from $X$ to $Y$ removed. Further let $\check{\g}^F$ be the graph $\g^F$ with the edge $X\rightarrow Y$ removed. Then $\tilde{\g}^F$ and $\check{\g}^F$ are the same graph.
    
    \label{lemma:forb + trunc}
\end{lemma}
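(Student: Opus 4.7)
The plan is to compare the two graphs edge by edge on their common node set $V \setminus F$, using two structural facts: first, $\tilde{\g}$ is obtained from $\g$ by removing exactly the directed edges $X \to C$ with $C \in \cn{\g}$; second, $X \notin F$ while $F \subseteq \de(\cn{\g},\g)$. In particular, every node in $F$ is a descendant of a causal node, and on any ``projection path'' in $\g$ (i.e., a path whose non-endpoint nodes all lie in $F$) the node $X$ can appear only as an endpoint. With this in hand I would split the comparison into three cases: directed edges $A \to B$ with $(A,B) \neq (X,Y)$, bi-directed edges, and the edge $X \to Y$ itself.

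For a directed edge $A \to B$ in $\g^F$ with $(A,B) \neq (X,Y)$, arising from a directed path $A \to W_1 \to \dots \to W_k \to B$ with $W_i \in F$ (possibly $k=0$), I would argue that no removed edge $X \to C$ can lie on this path. Such an edge cannot be internal, for otherwise $X \in F$; and if $X = B$ then every edge of the path incident to $X$ is incoming, while removed edges are outgoing from $X$. So necessarily $X = A$ and $W_1 = C \in \cn{\g}$. But then $B \in \de(W_k,\g) \subseteq \de(\cn{\g},\g) \subseteq \f{\g}$, and $B \notin F$ combined with acyclicity forces $B = Y$, contradicting $(A,B) \neq (X,Y)$. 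Hence the path survives in $\tilde{\g}$ and $A \to B$ is also in $\tilde{\g}^F$. The converse inclusion is immediate since $\tilde{\g}$ is a subgraph of $\g$.

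For a bi-directed edge $A \leftrightarrow B$ in $\g^F$, arising from a path $p$ whose non-endpoints lie in $F$ and are non-colliders on $p$, with arrowheads at $A$ and $B$, the same reasoning rules out any removed edge on $p$: it cannot be internal (else $X \in F$), and at an endpoint the projection rule requires an arrowhead at $X$, but $X \to C$ has its tail at $X$. So such paths transfer between $\g$ and $\tilde{\g}$, and the bi-directed edges agree. Finally, for the edge $X \to Y$: in $\check{\g}^F$ it is removed by definition. For $\tilde{\g}^F$, any directed path $X \to W_1 \to \dots \to W_k \to Y$ in $\tilde{\g}$ would still be a causal path from $X$ to $Y$ in $\g$ (as $\tilde{\g}$ is a subgraph), forcing $W_1 \in \cn{\g}$ and hence the first edge $X \to W_1$ to have been removed in forming $\tilde{\g}$, a contradiction. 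So $\tilde{\g}^F$ also lacks this edge, and the two graphs coincide.

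The main obstacle is the directed-edge case, where acyclicity has to be combined with the explicit description $F \subseteq \de(\cn{\g},\g)$ in order to pin down $B = Y$ whenever the projection path uses a removed edge. The bi-directed and $X \to Y$ cases are then short variants of the same basic argument, so once the first case is set up carefully the remainder is essentially bookkeeping.
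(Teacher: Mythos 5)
Your proof is correct and takes essentially the same route as the paper's: both reduce to showing that a removed edge $X \to C$ with $C \in \cn{\g}$ can only occur as the first edge of a projection path with $X$ as its tail endpoint (since $X \notin F$ and projection paths require arrowheads at endpoints for bi-directed edges), and that the forbidden-node structure plus acyclicity then forces the projected edge to be exactly $X \to Y$. Your case split by edge type is organized a little differently from the paper's split into "edges already in $\g$" versus "edges arising from nontrivial projection paths," but the substance is identical, and your derivation of $B = Y$ via $B \in \de(\cn{\g},\g) \subseteq \f{\g}$ is in fact slightly more explicit than the paper's.
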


\begin{proof}
Clearly, $\tilde{\g}^F$ and $\check{\g}^F$ contain the same nodes. We will now show that $\check{\g}^F$ and $\tilde{\g}^F$ also contain the same edges.

Assume first that $Y \notin \de(X,\g)$. Then $F = \emptyset$ and there are also no edges to remove. Therefore, both $\tilde{\g}^F$ and $\check{\g}^F$ are equal to $\g$. 

Assume now that $Y \in \de(X,\g)$ and consider any edge $e$ in $\g^F$ that is not the edge $X \rightarrow Y$. Suppose that $e$ is also an edge in $\g$. Clearly, it has to be an edge between two nodes not in $F$. As a result, it is not removed in the pruning step forming $\tilde{\g}$ from $\g$. It is also not removed in the latent projection step forming $\tilde{\g}^F$ from $\tilde{\g}$. Therefore, $e$ is also an edge in $\tilde{\g}^F$.

If $e$ is not an edge in $\g$ then it has to correspond to a path $p$ in $\g$ of the form $V_i \rightarrow \dots \rightarrow V_j,V_i \leftarrow \dots \rightarrow V_j$ or $V_i \leftarrow \dots \leftrightarrow \dots \rightarrow V_j$, such that all nodes but $V_i$ and $V_j$ are in $F$. If $p$ contains none of the edges removed in the construction of $\tilde{\g}$ it will clearly be mapped to the same edge $e$ in $\tilde{\g}^{F}$ it corresponds to in $\g^F$. If it does contain such an edge then that edge has to be without loss of generality the first edge on $p$ as $X\notin F$. Therefore, we can assume that $V_i=X$ and that the first edge on $p$ is of the form $\rightarrow$. As the only node in $\de(X,\g)$ that is not in $F$ is $Y$ it follows that $p$ in fact has to be of the form $X\rightarrow  \dots \rightarrow Y$. Therefore $p$ will be mapped to the edge $X\rightarrow Y$ in $\g^{F}$, that is, $e$ is the edge $X\rightarrow Y$. As the edge $X\rightarrow Y$, if it exists, is itself removed in $\tilde{\g}$ we can therefore conclude that $\tilde{\g}^{F}$ cannot contain the edge $X\rightarrow Y$ and that this is the only edge in which it may differ from $\g^F$. We can thus conclude that $\tilde{\g}^{F}=\check{\g}^{F}$.
\end{proof}

\subsection{Proof of Proposition \ref{prop:non forb}}

\begin{proposition}
    Consider nodes $X$ and $Y$ in an acyclic directed mixed graph $\g$ and let $F=\f{\g}\setminus\{X,Y\}$. Then $(Z,W)$ is a linearly valid conditional instrumental set relative to $(X,Y)$ in $\g$ if and only if it is a linearly valid conditional instrumental set relative to $(X,Y)$ in $\g^{F}$.
\end{proposition}

\begin{proof}
Suppose first that $(Z,W)$ is a valid conditional instrument set in $\g$. Then $(Z \cup W) \cap \f{\g}=\emptyset$ and therefore both $Z$ and $W$ are also node sets in $\g^F$. As $\f{\g^F}=\{X,Y\}$ it also follows that $(Z \cup W) \cap \f{\g^F}=\emptyset$. Further, $Z \not\perp_{\g} X \mid W$ implies that $Z \not\perp_{\g^F} X \mid W$ by the fact that the latent projection preserves all m-separation statements among node sets disjoint with $F$. Finally, by Lemma \ref{lemma:forb + trunc}, $\tilde{\g}^F$ is the latent projection graph of the graph $\tilde{\g}$ from Theorem \ref{theorem:graphical valid CIS} over $F$. Therefore, $Z \perp_{\tilde{\g}} Y \mid W$ implies that $Z \perp_{\tilde{\g}^F} Y \mid W$ by the same m-separation preservation argument.

Suppose now that $(Z,W)$ is a valid conditional instrument set in $\g^F$. As $Z$ and $W$ are node sets in $\g^F$ which do not contain $X$ or $Y$, it follows that $(Z \cup W) \cap \f{\g}=\emptyset$. The other two properties follow again by the fact that the latent projection preserves m-separation statements. 
\end{proof}

\subsection{Proof of Proposition \ref{prop:descX}}

\begin{proposition}
	Consider nodes $X$ and $Y$ in an acyclic directed mixed graph $\g$ and let $(Z,W)$ be a linearly valid conditional instrumental set relative to $(X,Y)$ in $\g$. If $(Z \cup W) \cap \de(X,\g) \neq \emptyset$, then $W$ is a valid adjustment set relative to $(X,Y)$ in $\g$. 
\end{proposition}

	\begin{proof}
	We will prove our claim by contradiction. So consider a linearly valid tuple $(Z,W)$ such that $(Z \cup W) \cap \de(X,\g) \neq \emptyset$ and suppose that $W$ is not a valid adjustment set relative to $(X,Y)$ in $\g$. Since $(Z,W)$ is a linearly valid tuple, it holds that $W \cap \f{\g}=\emptyset$. Therefore, $W$ not being a valid adjustment set implies that there exists a non-causal path $p$ from $X$ to $Y$ that is open given $W$. We will now use $p$ and $(Z \cup W) \cap \de(X,\g) \neq \emptyset$ to construct a path $q$ from $Z$ to $Y$ that is open given $W$ in $\tilde{\g}$, where we recall that $\tilde{\g}$ is the graph obtained by deleting all the first edges out of $X$ on causal paths from $X$ to $Y$ in $\g$.
	
	In order to construct $q$, we first show that $p$ is also a path in $\tilde{\g}$ and that it is also open given $W$ in $\tilde{\g}$. As a non-causal path from $X$ to $Y$ that is open given $W$, where $W\cap \f{\g}=\emptyset$, $p$ may not begin with an edge of the form $X \rightarrow C$ with $C \in \cn{\g}$. Therefore, $p$ is also a path in $\tilde{\g}$. Consider any node $N$ in $\g$ and any node $M \in \de(N,\g) \setminus \de(N,\tilde{\g})$. By assumption there exists a causal path from $N$ to $M$ in $\g$ but not in $\tilde{\g}$. Therefore, any such causal path needs to contain an edge of the form $X \rightarrow C$ with $C \in \cn{\g}$. Therefore, $M \in \f{\g}$. It follows that for any node $N$ that if $W \cap \de(N,\g)\neq\emptyset$ it also holds that $W \cap \de(N,\tilde{\g})\neq\emptyset$. In particular this holds for any potential collider on $p$ and we can therefore conclude that $p$ is also open given $W$ in $\tilde{\g}$.

	We now consider in turn the cases $W\cap \de(X,\g) \neq \emptyset$ and $Z \cap \de(X,\g) \neq \emptyset$.

Suppose first that $W \cap \de(X,\g) \neq \emptyset$.
 By the assumption that $(Z,W)$ is a linearly valid conditional instrumental set, there exists a path $p'$ from some node $A \in Z$ to $X$ open given $W$. If the last edge on $p'$ is one of the edges removed in the construction of $\tilde{\g}$ then it must either (i) contain a collider that lies in $\f{\g}$ or (ii) $A \in \f{\g}$. Since by assumption $(Z \cup W) \cap \f{\g}=\emptyset$, (i) would imply that $p'$ is not open given $W$ and (ii) is not allowed. Therefore, $p'$ must be a path in $\tilde{\g}$ that is open given $W$ in $\tilde{\g}$ by the same argument as for $q$. Let $I$ be the node closest to $A$ on $p'$ that is also on $p$ and consider $q=p'(A,I) \oplus p(I,Y)$. It remains to show that $q$ is open given $W$, where we consider in turn the cases that $I=A,I=Y,I=X$, $I\neq X$ is a non-collider on $q$ and $I\neq X$ is a collider on $q$. If $I=A$ or $I=Y$ then $q$ is a subpath of $p$ and $p'$, respectively, and as a result clearly open given $W$. If $I=X$, then since by assumption $W \cap \de(X,\g) \neq \emptyset$ and $X \notin W$, $q$ is open irrespective of whether $X$ is a collider or a non-collider on $q$. If $I\neq X$ is a non-collider on $q$ then $I$ must be a non-collider on either $p'$ or $p$ and as a result $I \notin W$. Therefore, $q$ is open given $W$. Consider finally the case that $I\neq X$ is a collider on $q$. This implies that either $I$ is a collider on $p'$, $p'$ contains a collider on $p'(I,X)$ in $\de(I,\g)$ or $p'(I,X)$ is causal. Since there exists a node $B \in W \cap \de(X,\g)$ all three cases imply that $W \cap \de(I,\g) \neq \emptyset$. Thus, $q$ is open given $W$.

	Suppose now that $Z\cap \de(X,\g) \neq \emptyset$ and let $A' \in Z \cap \de(X,\g)$. 
	By the assumption that $A' \in \de(X,\g)$ there exists a directed path $p''$ from $X$ to $A'$. As $(Z \cup W) \cap \f{\g} = \emptyset$, $p'$ is not a subpath of a causal path from $X$ to $Y$ in $\g$. Therefore it is also a path in $\tilde{\g}$. If $p''$ is not open given $W$ then it contains at least one node $B$ such that $B \in W \cap \de(X,\g)$. Therefore this case reduces to the case that there exists a node $B \in W \cap \de(X,\g)$, which we have already considered. Therefore, we can suppose that $p''$ is open given $W$ in $\tilde{\g}$.  Let $I$ be the first node on $-p''$ that is also on $p$ and consider $q = -p''(A,I) \oplus p(I,Y)$, where $-p''$ denotes the reversed path $p''$ that goes from $A$ to $X$. We now show that $q$ is open given $W$ by considering sequentially the cases that $I=A,I=Y$ and that $I$ is a non-endpoint node on $q$. If $I=A$ or $I=Y$ than $q$ is a subpath of $p$ or $-p''$, respectively, and therefore open given $W$. Suppose now that $I$ is a non-endpoint node on $q$. Since the causal path $p''$ is open given $W$ and $X \notin W$ it follows that $I \notin W$. By the fact that $p''$ is causal it further follows that $I$ may not be a collider on $q$. Therefore, $q$ is open given $W$.
\end{proof}

\section{Proofs for Section \ref{mainIV}}

\subsection{Proof of Theorem \ref{theorem:varequ}}

\begin{theorem}
	Consider nodes $X$ and $Y$ in an acyclic directed mixed graph $\g$ such that $\de(X,\g)=\{X,Y\}$. 
	Let $(Z,W)$ be a linearly valid conditional instrumental set relative to $(X,Y)$ in $\g$ and $\tilde{Y}=Y-\tau_{yx}X$. Then for all linear structural equation models compatible with $\g$ such that $\Sigma_{xz.w}\neq 0$, $\hat{\tau}_{yx}^{z.w}$ is an asymptotically normal estimator of $\tau_{yx}$ with asymptotic variance
	\begin{equation}
	a.var(\hat{\tau}_{yx}^{z.w}) = \frac{\sigma_{\tilde{y}\tilde{y}.w}}{\sigma_{xx.w}-\sigma_{xx.zw}}.
	\end{equation}
\end{theorem}

\begin{proof}
 By Theorem \ref{theorem:graphical valid CIS}, $\hat{\tau}_{yx}^{z.w}$ is a consistent estimator of the total effect $\tau_{yx}$. By Lemma \ref{lemma: heteroskedastic}, it is an asymptotically normal estimator with asymptotic variance  
 \[ 
\frac{E\{(\delta_{x.w}-\delta_{x.zw})^2 \kappa^2_{ys.t}\}}{E\{(\delta_{x.w}-\delta_{x.zw})^2\}^2},
	\]
	with $\delta_{x.w}=X-\beta_{xw}W,\delta_{x.zw}=X-\beta_{xw.z}W-\beta_{xz.w}Z$ and $\kappa_{ys.t}=Y - \gamma_{ys.t} S$, where $S=(X,W)$ and $T=(Z,W)$. 
 By Lemma \ref{lemma: residual independence}, $(\delta_{x.w}-\delta_{x.zw}) \ci \kappa^2_{ys.t}$ and as a result the asymptotic variance simplifies to
 \[ 
\frac{E(\kappa^2_{ys.t})}{E\{(\delta_{x.w}-\delta_{x.zw})^2\}}.
\]
By Lemma \ref{lemma: variance difference}, $E\{(\delta_{x.w}-\delta_{x.zw})^2\} = \sigma_{xx.w} - \sigma_{xx.zw}$. Lastly, we have shown in the proof of Theorem \ref{theorem:graphical valid CIS} that $\gamma_{ys.t}=(\tau_{yx},\beta_{\tilde{y}w})$, where $\tilde{Y}=Y-\tau_{yx}X$. Therefore, $E(\kappa^2_{ys.t})=E(\delta^2_{\tilde{y}w})=\sigma_{\tilde{y}\tilde{y}.w}$, where $\delta_{\tilde{y}w}= \tilde{Y} - \beta_{\tilde{y}w}W$.
\end{proof}

	\begin{lemma}
	Consider two random variables $X$ and $Y$, and two random vectors $Z$ and $W$. Suppose that $(X,Y,Z,W)$ has mean 0 and finite variance, and that $\Sigma_{xz.w}\neq 0$. Let $S=(X,W)$ and $T=(Z,W)$ and consider the two-stage least squares estimator of $Y$ on $X$ using the tuple $(Z,W)$, denoted $\hat{\gamma}_{yx}^{z.w}$. Then $\hat{\gamma}_{yx}^{z.w}$ converges in probability to some limit $\gamma_{yx}^{z.w}$ and $n^{(1/2)}(\hat{\gamma}_{yx}^{z.w}-\gamma_{yx}^{z.w})$ converges in distribution to a normally distributed random variable with mean $0$ and variance
	\[
\frac{E\{(\delta_{x.w}-\delta_{x.zw})^2 \kappa^2_{ys.t}\}}{E\{(\delta_{x.w}-\delta_{x.zw})^2\}^2},
	\]
	with $\delta_{x.w}=X-\beta_{xw}W,\delta_{x.zw}=X-\beta_{xw.z}W-\beta_{xz.w}Z$ and $\kappa_{ys.t}=Y - \gamma_{ys.t} S$. 
	\label{lemma: heteroskedastic}
\end{lemma}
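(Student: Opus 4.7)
The plan is to reduce the multivariate two-stage least squares estimator to a one-dimensional ratio via a Frisch--Waugh--Lovell (FWL) decomposition, and then combine the central limit theorem with Slutsky's theorem. Introduce the sample residual vector $\hat{U} = (P_T - P_W) X_n$, where $P_T$ and $P_W$ are the sample projection matrices onto the column spaces of $T_n$ and $W_n$. Since $W$ is a subvector of $T$, one has $P_W P_T = P_W$, so $P_T - P_W$ is idempotent, $\hat{U}^\top W_n = 0$, and $\hat{U}^\top X_n = \hat{U}^\top \hat{U}$. Expanding the first-stage coefficients, $\hat{U}_i = \hat{\beta}_{xz.w} Z_i + (\hat{\beta}_{xw.z} - \hat{\beta}_{xw}) W_i$, which is the sample analogue of $U := \delta_{x.w} - \delta_{x.zw}$. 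FWL for 2SLS then delivers
\[
\hat{\gamma}_{yx}^{z.w} = \frac{\hat{U}^\top Y_n}{\hat{U}^\top \hat{U}}, \qquad \text{with population limit } \gamma_{yx}^{z.w} = \frac{E(UY)}{E(U^2)}.
\]
The denominator $E(U^2) = \sigma_{xx.w} - \sigma_{xx.zw}$ is strictly positive by $\Sigma_{xz.w} \neq 0$ (Lemma \ref{lemma: cor neq 0}), so $\gamma_{yx}^{z.w}$ is well defined; the LLN plus the continuous mapping theorem then yield convergence in probability.

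For the limiting distribution, decompose $Y_i = \gamma_{ys.t} S_i + \kappa_{ys.t,i}$ using the population 2SLS coefficients. Substituting and invoking the two sample orthogonality relations above gives the clean identity
\[
\hat{\gamma}_{yx}^{z.w} - \gamma_{yx}^{z.w} = \frac{\sum_i \hat{U}_i\, \kappa_{ys.t,i}}{\sum_i \hat{U}_i^2}.
\]
The denominator rescaled by $1/n$ tends to $E(U^2)$, so the task reduces to the numerator. Expanding $\hat{U}_i$ and regrouping,
\[
n^{-1/2}\sum_i \hat{U}_i\, \kappa_{ys.t,i} = \hat{\beta}_{xz.w}\cdot n^{-1/2}\sum_i Z_i \kappa_{ys.t,i} + (\hat{\beta}_{xw.z} - \hat{\beta}_{xw})\cdot n^{-1/2}\sum_i W_i \kappa_{ys.t,i}.
\]
The population 2SLS orthogonality $\beta_{st}\, E(T \kappa_{ys.t}) = 0$, written in block form with $S=(X,W)$ and $T=(Z,W)$, unpacks to the two moment conditions $E(W \kappa_{ys.t}) = 0$ and $\beta_{xz.w}\, E(Z\kappa_{ys.t}) = 0$, and hence to $E(U\kappa_{ys.t}) = 0$. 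Under these moment conditions, $\sqrt{n}$-consistency of the first-stage coefficients, and Slutsky, the substitution $\hat{\beta} \mapsto \beta$ inside the numerator introduces only $o_P(1)$ error, so
\[
n^{-1/2}\sum_i \hat{U}_i\, \kappa_{ys.t,i} = n^{-1/2}\sum_i U_i\, \kappa_{ys.t,i} + o_P(1).
\]
Applying the CLT to the iid mean-zero summands $U_i \kappa_{ys.t,i}$ gives a $\mathcal{N}(0, E(U^2 \kappa_{ys.t}^2))$ limit, and combining with the denominator via Slutsky yields the stated asymptotic variance.

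The main obstacle is rigorously justifying the replacement $\hat{U} \mapsto U$ inside the numerator. The delicate part is the term $\hat{\beta}_{xz.w}\cdot n^{-1/2}\sum_i Z_i \kappa_{ys.t,i}$: the individual summands need not be mean-zero when $E(Z\kappa_{ys.t})$ is nonzero as a vector, and the potential $\sqrt n$ divergence is killed only by the identity $\beta_{xz.w}\, E(Z\kappa_{ys.t}) = 0$ combined with a delta-method expansion of $\hat{\beta}_{xz.w} - \beta_{xz.w}$. Once this cancellation is verified, the $W$ term is handled directly by $E(W\kappa_{ys.t}) = 0$, and the rest of the argument is standard CLT/Slutsky bookkeeping.
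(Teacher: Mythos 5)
Your route is genuinely different from the paper's. The paper does not re-derive asymptotic normality from scratch: it cites the standard heteroskedasticity-robust sandwich formula for the full two-stage least squares coefficient vector (Hansen, Ch.~12.16), observes that limit and variance coincide with those of the ordinary least squares regression of $Y$ on the population fitted value $\hat{S}=\beta_{st}T$, extracts the coordinate corresponding to $\hat{X}$ via the partial-regression form of the OLS sandwich (Corollary 11.1 of Buja et al.), and then only has to verify the algebraic identity $\delta_{\hat{x}.w}=\delta_{x.w}-\delta_{x.zw}$, which follows from $\beta_{\hat{x}w}=\beta_{xw}$ by Cochran's formula. Your sample-level Frisch--Waugh--Lovell reduction, the exact identity $\hat{\gamma}_{yx}^{z.w}-\gamma_{yx}^{z.w}=\sum_i\hat{U}_i\kappa_{ys.t,i}/\sum_i\hat{U}_i^2$, and the identification $E(U^2)=\sigma_{xx.w}-\sigma_{xx.zw}>0$ are all correct, and they yield a self-contained argument where the paper leans on external references.

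However, the step you yourself flag as delicate contains a genuine gap, and your proposed resolution of it is wrong. Writing $n^{-1/2}\sum_i Z_i\kappa_{ys.t,i}=n^{1/2}E(Z\kappa_{ys.t})+O_P(1)$ and expanding, the difference between $n^{-1/2}\sum_i\hat{U}_i\kappa_{ys.t,i}$ and $n^{-1/2}\sum_iU_i\kappa_{ys.t,i}$ is $n^{1/2}(\hat{\beta}_{xz.w}-\beta_{xz.w})E(Z\kappa_{ys.t})+o_P(1)$. The identity $\beta_{xz.w}E(Z\kappa_{ys.t})=0$ annihilates the leading term $n^{1/2}\beta_{xz.w}E(Z\kappa_{ys.t})$, i.e.\ it rules out divergence, but it does nothing to this cross term: $n^{1/2}(\hat{\beta}_{xz.w}-\beta_{xz.w})$ is asymptotically a non-degenerate normal vector, and its product with a fixed non-zero vector $E(Z\kappa_{ys.t})$ is $O_P(1)$, not $o_P(1)$, and is correlated with $n^{-1/2}\sum_iU_i\kappa_{ys.t,i}$. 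In the over-identified case one can have $\beta_{xz.w}E(Z\kappa_{ys.t})=0$ while $E(Z\kappa_{ys.t})\neq0$ componentwise, and then your expansion does not produce the stated variance. The clean fix is to impose (or establish) $E(Z\kappa_{ys.t})=0$ as a vector; this is exactly what holds wherever the lemma is applied in the paper, since for a valid conditional instrumental set $\gamma_{ys.t}=(\tau_{yx},\beta_{\tilde{y}w})$, so $\kappa_{ys.t}=\tilde{Y}-\beta_{\tilde{y}w}W$ and $E(Z\kappa_{ys.t})=\Sigma_{z\tilde{y}.w}=0$. (The paper's own citation of Hansen's theorem carries the same implicit requirement $E(T\kappa_{ys.t})=0$, so the lemma as literally stated is no better protected; but your write-up asserts that the delta-method expansion kills the term, when in fact it is precisely that expansion which exhibits the surviving $O_P(1)$ contribution.)
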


\begin{proof}
We have shown in the proof of Theorem \ref{theorem:graphical valid CIS} that under the assumption that $\Sigma_{xz.w}\neq 0$, the vector valued two-stage least squares estimator $\hat{\gamma}_{ys.t}$ converges in probability to $\Sigma_{yt} \Sigma_{tt} \Sigma_{st} (\Sigma_{st} \Sigma^{-1}_{tt} \Sigma^\top_{st})^{-1}$. By Chapter 12.16 of \citet{hansen2019} the vector valued two-stage least squares estimator $\hat{\gamma}_{ys.t}$ is also asymptotically normal with asymptotic variance matrix
\begin{align*}
a.var(\hat{\gamma}_{ys.t})&= 
(\Sigma_{st}\Sigma^{-1}_{tt}\Sigma_{st}^{\top})^{-1} E(\Sigma_{st} \Sigma^{-1}_{tt} T \kappa^2_{ys.t} T^{\top} \Sigma_{tt}^{-\top} \Sigma_{st}^{\top}) (\Sigma_{st}\Sigma^{-1}_{tt}\Sigma_{ts})^{-\top}.
\end{align*}
Using that $\beta_{st}=\Sigma_{st}\Sigma^{-1}_{tt}$ we can conclude that the limit, the residual variance and the asymptotic variance of the estimator are the same as for the ordinary least squares regression of $Y$ on the fitted value $\hat{S}=\beta_{st}T$. In order to obtain the entry corresponding to $\hat{X}$ of this asymptotic variance matrix, we apply Corollary 11.1 from \citet{buja2014models} to this regression and obtain that $n^{1/2}(\hat{\gamma}_{yx}^{z.w} -  \gamma_{yx}^{z.w})$ converges in distribution to a normally distributed random variable with variance
\[
\frac{E(\delta_{\hat{x}.w}^2 \kappa^2_{ys.t})}{E(\delta_{\hat{x}.w}^2)^2},
	\]
where $\delta_{\hat{x}.w}= \hat{X} - \beta_{\hat{x}w}W$ with $\hat{X}=\beta_{xt}T$. 
Further, as $\hat{X}=X - (X - \beta_{xt}T)=X-\delta_{x.zw}$ it follows that
\begin{align*}
    \delta_{\hat{x}.w}
    &= \hat{X}  - \beta_{xw}W \\
    &= X - \beta_{xw}W - \delta_{x.zw} \\
    &= \delta_{x.w} - \delta_{x.zw},
\end{align*}
where we use that
\begin{align*}
    \beta_{\hat{x}w}
    &=E(\hat{X} W^\top)E(W W^\top)^{-1} \\ 
    &=E\{(\beta_{xz.w}Z + \beta_{xw.z}W) W^\top]\} E(W W^\top)^{-1} \\
    &=\beta_{xz.w} \beta_{zw} + \beta_{xw.z}\\
    &=\beta_{xw},
\end{align*}
by Lemma \ref{lemma:Cochran}. 
\end{proof}

The following two lemmas, which are adaptations of Lemma B.3 and B.4 of \citet{henckel2019graphical}, respectively, rely on the insight that we can rewrite any population level residual in a linear structural equation model as a linear function in the errors $\epsilon$ of the underlying model. For example, given node sets $A$ and $B$ in an acyclic directed mixed graph $\g=(V,E)$, such that $V$ is generated from a linear structural equation model,
\[
A - \beta_{ab}B= \sum_{W \in V}\tau_{aw} \epsilon_{w} -  \sum_{Z \in B} \left( \beta_{az.b_{-z}} \sum_{W \in V}  \tau_{zw} \epsilon_{w} \right),
\]
where $B_{-Z}=B \setminus Z$.

\begin{lemma}
Consider nodes $X$ and $Y$ in an acyclic directed mixed graph $\g=(V,E)$, such that $V$ is generated from a linear structural equation model compatible with $\g$ and $\de(X,\g)=\{X,Y\}$. Let $(Z,W)$ be a linearly valid conditional instrumental set relative to $(X,Y)$ in $\g$ and let $\delta_{x.w}=X-\beta_{xw}W,\delta_{x.zw}=X-\beta_{xw.z}W-\beta_{xz.w}Z$ and $\delta_{\tilde{y}.w}=\tilde{Y} - \beta_{\tilde{y}w} W$, where $\tilde{Y}=Y-\tau_{yx} X$. Then the random variables $\delta_{x.w}-\delta_{x.zw}$ and $\delta_{\tilde{y}.w}$ are independent.
	\label{lemma: residual independence}
\end{lemma}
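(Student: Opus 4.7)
The plan is to first reduce independence of $\delta_{x.w} - \delta_{x.zw}$ and $\delta_{\tilde{y}.w}$ to independence of $\delta_{z.w}$ and $\delta_{\tilde{y}.w}$ via an algebraic identity, and then establish the latter by representing both residuals as linear combinations of the model errors and showing that these two combinations depend on disjoint, mutually independent groups of errors.

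For the algebraic reduction, I would apply Lemma~\ref{lemma:Cochran} to obtain $\beta_{xw}= \beta_{xw.z}+ \beta_{xz.w}\beta_{zw}$ and substitute this into $\delta_{x.w} - \delta_{x.zw} = -\beta_{xw}W + \beta_{xw.z}W + \beta_{xz.w}Z$ to obtain
\[
\delta_{x.w} - \delta_{x.zw} \;=\; \beta_{xz.w}(Z - \beta_{zw}W) \;=\; \beta_{xz.w}\,\delta_{z.w}.
\]
Since $\beta_{xz.w}$ is a deterministic vector of scalars, independence of $\delta_{x.w} - \delta_{x.zw}$ from $\delta_{\tilde{y}.w}$ is equivalent to independence of $\delta_{z.w}$ from $\delta_{\tilde{y}.w}$.

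Next, I would use path tracing to express each observable as $V_i = \sum_k \tau_{v_iv_k}\,\epsilon_{v_k}$. Both residuals then become deterministic linear combinations
\[
\delta_{z.w} = \sum_k a_k\,\epsilon_{v_k}, \qquad \delta_{\tilde{y}.w} = \sum_k b_k\,\epsilon_{v_k}.
\]
Because $\tilde{Y}$ satisfies the structural equations of a linear SEM compatible with $\tilde{\g}$ (the observation driving Lemma~\ref{lemma:Martin}), the nonzero coefficients $b_k$ are confined to ancestors of $Y \cup W$ in $\tilde{\g}$, while the nonzero $a_k$ are confined to ancestors of $Z \cup W$ in $\g$.

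The key step, and the principal obstacle, is to show that the supports $A = \{V_k : a_k \neq 0\}$ and $B = \{V_k : b_k \neq 0\}$ meet no common district of $\g$, so that the associated error vectors are independent by the third property in the linear SEM definition. I would do this by passing to the canonical DAG $\g^c$ obtained from $\g$ by replacing each bidirected edge $V_i \leftrightarrow V_j$ with an explicit latent common parent $L_{ij}\to V_i,\; L_{ij}\to V_j$, in which all primitive errors are mutually independent. A primitive error source contributes to $\delta_{z.w}$ only if its node has a path in $\g^c$ to some element of $Z$ that is open given $W$, and analogously contributes to $\delta_{\tilde{y}.w}$ only via a corresponding open path to $Y$ in the DAG associated with $\tilde{\g}$; a source common to both would then yield an open m-connecting path between $Z$ and $Y$ given $W$ in $\tilde{\g}$, contradicting the valid CIS condition $Z \perp_{\tilde{\g}} Y \mid W$. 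Making this support-via-open-paths argument rigorous—so that it never enlarges the supports, only bounds them—is the technical bulk of the proof. Once disjointness of districts is established, independence of the two residuals follows immediately from independence of the corresponding error blocks.
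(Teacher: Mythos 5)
Your overall strategy is the same as the paper's: write both residuals as deterministic linear combinations of the model errors, bound the supports of these combinations by m-connection statements, and derive a contradiction with $Z \perp_{\tilde{\g}} Y \mid W$ if the two supports intersect or are joined by a bidirected edge. The preliminary reduction $\delta_{x.w}-\delta_{x.zw} = \beta_{xz.w}\,\delta_{z.w}$ via Lemma \ref{lemma:Cochran} is correct, and is effectively also used in the paper (second half of Lemma \ref{lemma: non-zero coefficient}); just note that the two independence statements are not ``equivalent'' --- the row vector $\beta_{xz.w}$ could in principle annihilate the dependent directions of the vector $\delta_{z.w}$ --- but you only need the implication from the stronger statement to the weaker one, which does hold.

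The genuine gap is that the step you label ``the principal obstacle'' is essentially the entire mathematical content of the lemma, and you assert it rather than prove it. Two things are missing. First, the claim that a nonzero coefficient of $\epsilon_m$ in $\delta_{z.w}$ (resp.\ $\delta_{\tilde{y}.w}$) forces a suitably open path from $M$ to $Z$ (resp.\ to $Y$ in $\tilde{\g}$) given $W$ is not automatic: it is the contrapositive of a path-cancellation identity --- if every path from $Z$ to $W_j$ ending with an arrowhead into $W_j$ is blocked by $W$, then $\tau_{zw_j}=\beta_{zw}\tau_{ww_j}$ --- which the paper establishes separately (Lemmas \ref{lemma: cancellation} and \ref{lemma: non-zero coefficient}) via an augmented model and adjustment-set arguments; moreover, the open path one extracts must carry extra structure (an arrowhead into $M$ when $M\in W$, and a causal path from $M$ into $Z\cup W$ when $M\notin W$) for the next step to go through. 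Second, concatenating the two open paths at a common error source does not immediately yield an open path from $Z$ to $Y$ given $W$ in $\tilde{\g}$: one must handle whether the junction node is a collider, whether it or its descendants lie in $W$, intersections of the two paths away from the junction, and the sibling case $M\leftrightarrow M'$ (your canonical-DAG device addresses the last point in principle, but the collider and descendant bookkeeping remains, and it constitutes most of the paper's proof). Until these two components are supplied, the proposal is a faithful outline of the correct argument rather than a proof.
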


\begin{proof}
Our proof relies extensively on Lemma \ref{lemma: non-zero coefficient} and is closely related to the proof of Lemma B.3 in \citet{henckel2019graphical}. For every node $A \in V$ in a linear structural equation model $V$ it holds that
\[
A = \sum_{B \in V} \tau_{ab} \epsilon_{b}.
\]
We can use this to rewrite both $\delta_{x.w}-\delta_{x.zw}$ and $\delta_{\tilde{y}.w}$ as linear functions in the errors of the underlying linear structural equation model. Let $S$ be the set of nodes whose corresponding error term has a non-zero coefficient in the equation for $\delta_{x.w}-\delta_{x.zw}$ and $S'$ be the corresponding set for $\delta_{\tilde{y}.w}$. We will now show that by the assumption that $(Z,W)$ is a linearly valid conditional instrumental set relative to $(X,Y)$ it follows that $S \cap S' = \emptyset$ and $ \mathrm{sib}(S,\g) \cap S'=\emptyset$. We do so by contradiction, that is, by showing that if either Case (i) $S \cap S' \neq \emptyset$ or Case (ii) $\mathrm{sib}(S,\g) \cap S' \neq \emptyset$ we can construct a path from $Z$ to $Y$ in $\tilde{\g}$ that is open given $W$.

Throughout this proof we will apply Lemma \ref{lemma: non-zero coefficient} to construct paths from $Z$ to some node $M \neq Y$ in $\g$ that is open given $W$. Any path from $Z$ to $M$ in $\g$ that is open given $W$ cannot contain the edge $X \rightarrow Y$, as it would then contain a collider $Y$ that cannot be opened by $W$ as $\de(Y,\g)=\{Y\}$ and $Y \notin W$. Therefore, any such path $p$ is also a path in $\tilde{\g}$. Further, as $\g$ and $\tilde{\g}$ only differ regarding a directed edge, the sets of siblings in both graphs are the same. We will use these two arguments throughout the remainder of the proof to simply assume that all statements we consider are in the graph $\tilde{\g}$. 

Consider first Case (i): $S \cap S' \neq \emptyset$. This implies that there exist some node $M$, whose corresponding coefficients in the functions for $\delta_{x.w}-\delta_{x.zw}$ and $\delta_{\tilde{y}.w}$ are both non-zero. Suppose first that $M \in W$. By Lemma \ref{lemma: non-zero coefficient} it then follows that there exists a path $p_1$ from some node $A \in Z$ to $M$ in $\g$ that is open given $W$ and whose last edge points into $M$. In addition, there also exists a path $p_2$ from $M$ to $Y$ in $\tilde{\g}$ that is open given $W$ and whose first edge points into $M$. Let $I$ be the first node on $p_1$ that also lies on $p_2$ and consider the path $q=p_1(A,I) \oplus p_2(I,Y)$. We will now show that $q$ is open given $W$. If $I=A$ or $I=Y$, $q$ is a subpath of either $p_1$ or $p_2$ and therefore open given $W$. If $I=M$, $q$ is open as $M\in W$ is a collider on $q$. Suppose now that $I\notin \{A,Y,M\}$ is a non-collider on $q$. Then $I$ may not be a collider on both $p_1$ and $p_2$. Since both $p_1$ and $p_2$ are open given $W$ it therefore follows that $I \notin W$ and therefore $q$ is open given $W$. Suppose now that $I\notin \{A,Y,M\}$ is a collider on $q$. If $I$ is also a collider on either $p_1$ or $p_2$ it follows that $I \in W$ and we are done. If $I$ is not a collider on either $p_1$ or $p_2$, then $p_1$ either of the form $A \cdots \rightarrow I \rightarrow \dots M$ or $A \cdots \leftrightarrow I \rightarrow \dots M$. As $p_1(I,M)$ is open given $W$ and $M \in W$ it follows that $\de(I,\tilde{\g}) \cap W \neq \emptyset$ and therefore $q$ is open given $W$. 

Suppose now that $M \notin W$. Consider first the case $M=Y$. As $Y \notin W$ and $\de(Y,\g)=\{Y\}$ it follows that the coefficient corresponding to $\epsilon_y$ in the equation for $\delta_{x.w}-\delta_{x.zw}$ is always $0$. We can therefore disregard this case. Consider now the case that $M \in Z$. There then exists a path $p$ from $M \in Z$ to $Y$ in $\tilde{\g}$ that is open given $W$ and we are done. Finally, suppose $M \notin Z 
\cup W \cup Y$. It then follows that there exists a path $p_1$ from some node $A \in Z$ to $M$ in $\g$ that is open given $W$. In addition, there also exists a path $p_2$ from $M$ to $Y$ in $\tilde{\g}$ that is open given $W$ and $\de(M,\tilde{\g}) \cap W \neq \emptyset$. Let $I$ be the first node on $p_1$ that is also on $p_2$ and consider $q=p_1(A,I) \oplus p_2(I,Y)$. If $I=A$ or $I=Y$, $q$ is a subpath of either $p_1$ or $p_2$ and therefore open given $W$. If $I=M$, $q$ is open by the fact that $M \notin W$ and $\de(M,\tilde{\g}) \cap W \neq \emptyset$, irrespectively of whether it is a collider or a non-collider on $q$. Suppose now that $I\notin \{A,Y,M\}$ is a non-collider on $q$. Then $I$ may not be a collider on both $p_1$ and $p_2$. Since both $p_1$ and $p_2$ are open given $W$ it therefore follows that $I \notin W$ and therefore $q$ is open given $W$. Suppose now that $I\notin \{A,Y,M\}$ is a collider on $q$. If $I$ is also a collider on either $p_1$ or $p_2$ it follows that $I \in W$ and we are done. If $I$ is not a collider on either $p_1$ or $p_2$, then $p_1$ either of the form $A \cdots \rightarrow I \rightarrow \dots M$ or  $A \cdots \leftrightarrow I \rightarrow \dots M$. As $p_1(I,M)$ is open given $W$ and $\de(M,\tilde{\g}) \cap W \neq \emptyset$ it follows that $\de(I,\tilde{\g}) \cap W \neq \emptyset$ and therefore $q$ is open given $W$.

Consider now Case (ii): $\mathrm{sib}(S,\tilde{\g}) \cap S' \neq \emptyset$. This implies that their exists a pair of nodes $M$ and $M'$, such that $M \in \mathrm{sib}(M',\tilde{\g})$, the coefficient for $\epsilon_m$ in the equation for $\delta_{x.w}-\delta_{x.zw}$ is non-zero and the coefficient for $\epsilon_{m'}$ in the equation for $\delta_{\tilde{y}.w}$ is non-zero. Suppose first that $M \in W$. In this case there exists a path $p$ from some node $A\in Z$ to $M$ that ends with an edge into $M$ and is open given $W$. Suppose also that $M' \in W$. If $M'$ lies on $p$ than it must be a collider on $p$ and therefore $p(A,M')$ is a path from $Z$ to $M'$ that is open given $W$ and whose last edge points into $M'$. If $M'$ does not lie on $p$ we can simply add the edge $M \leftrightarrow M'$ to $p$ to also obtain such a path. If $M' \in W$ we have therefore reduced this case to the corresponding case in Case (i). If $M' \notin W$, then we know that there either exists a causal path from $M'$ to $Y$ that is open given $W$ or $\de(M',\tilde{\g}) \cap W \neq \emptyset$ and there must exist a path from $M'$ to $Y$ that is open given $W$. In both cases we can use the edge $M \leftrightarrow M'$ to construct a path from $Y$ to $M$ that is open given $W$ and whose last edge is into $M$. Again we have reduced the problem to the corresponding case in Case (i). 

Suppose now that $M \notin W$. If $M' \in W$ we can argue as for the case $M \in W$ and $M' \notin W$. Lastly, suppose that both $M$ and $M'$ are not in $W$. Again we have four cases to consider: a) $\de(M,\tilde{\g}) \cap W \neq \emptyset$ and $\de(M',\tilde{\g}) \cap W \neq \emptyset$, b) $\de(M,\tilde{\g}) \cap W \neq \emptyset$ and $\de(M',\tilde{\g}) \cap W = \emptyset$, c) $\de(M,\tilde{\g}) \cap W = \emptyset$ and $\de(M',\tilde{\g}) \cap W \neq \emptyset$ and d) $\de(M,\tilde{\g}) \cap W = \emptyset$ and $\de(M',\tilde{\g}) \cap W = \emptyset$. In Case a), there exists a path $p_1$ from $Z$ to $M$ that is open given $W$ as well as a path $p_2$ from $M'$ to $Y$ that is open given $W$. As $\de(M',\tilde{\g}) \cap W \neq \emptyset$ we can use the edge $M \leftrightarrow M'$ to either extend $p_2$ to a path from $M$ to $Y$ that is open given $W$ or $M$ lies on $p_2$ and $p_2(M,Y)$ is such a path. In either case we have reduced this to the corresponding setting in Case (i). In Case b), there exists a path $p_1$ from $Z$ to $M$ that is open given $W$ as well as a causal path $p_2$ from $M'$ to $Y$ that is open given $W$. As $p_2$ is causal, $M$ may not be a collider on $p_2$, so we can again either enlarge $p_2$ with the edge $M \leftrightarrow M'$ or use $p(M,Y)$ instead to reduce this to the corresponding setting in Case (i). The same arguments apply to Cases c) and d).

In conclusion, the set nodes whose errors have non-zero coefficients in the equation for $\delta_{\tilde{y}.w}$ and the corresponding set for $\delta_{x.w}-\delta_{x.zw}$ are disjoint. In addition, there also exist no bi-directed edges between any two nodes from the two sets. By our assumption on the independences that hold between the errors in a linear structural equation model, we can therefore conclude that $\delta_{\tilde{y}.w} \ci (\delta_{x.w}-\delta_{x.zw})$.
\end{proof}

\begin{lemma}
Consider nodes $A$ and $M$, and node sets $Z$ and $W=\{W_1,\dots,W_k\}$ in an ADMG $\g = (V,E)$, such that $V=\{V_1,\dots,V_q\}$ is generated from a linear structural equation model compatible with $\g$. Let $T=(Z,W)$ and consider the residual $\delta_{a.w}=A-\beta_{aw}W$. If the coefficient for $\epsilon_m$ in the equation for $\delta_{a.w}$ is non-zero, then there are two cases: First, if $M \in W$ then there exists a path $p$ from $A$ to $M$ that ends with an edge pointing into $M$ and which is open given $W$. Second, if $M \notin W$, then there exists a causal path from $M$ to some node $M' \in \{A\} \cup W$ such that the coefficient of $\epsilon_{m'}$ is non-zero and there exists a path from A to M that is open given W.

Consider also the difference of residuals $\delta_{a.w} - \delta_{a.zw}=\beta_{at}T - \beta_{aw}W$. If the coefficient for $\epsilon_m$ in the equation for $\delta_{a.w} - \delta_{a.zw}$ is non-zero, then there exist two cases: First, if $M \in W$ then there exists a path $p$ from $Z$ to $M$ that ends with an edge pointing into $M$ and which is open given $W$. Second, if $M \notin W$, then there exists a causal path from $M$ to some node $M' \in (Z \cup W)$ such that the coefficient of $\epsilon_{m'}$ is non-zero and there exists a path from M to Z that is open given W.

\label{lemma: non-zero coefficient}
\end{lemma}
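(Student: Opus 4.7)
The plan is to prove both statements by contrapositive, starting from explicit expansions of each residual as a linear combination of the structural errors. Writing $V = (\mathrm{Id} - \mathcal{A})^{-1}\epsilon$ and applying Wright's path-tracing rule, the first residual expands as
\[
\delta_{a.w} = \sum_{N \in V} c_n \, \epsilon_n, \qquad c_n = \tau_{an} - \sum_{W_i \in W} \beta_{aw_i.w_{-i}} \, \tau_{w_i n},
\]
where $W_{-i} = W \setminus \{W_i\}$, and for $T = (Z,W)$ the difference of residuals expands analogously but with a sum over $T_j \in T$ replacing the $\tau_{an}$ term. The graphical conclusions mirror these expansions: the ``source'' set is $\{A\} \cup W$ in the first case and $Z \cup W$ in the second, reflecting the loss of the $\tau_{an}$ contribution in the difference.

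For the subcase $M \notin W$, I would argue in two steps. First, if no causal path from $M$ to any node in $\{A\} \cup W$ exists, then $\tau_{am} = 0$ and $\tau_{w_i m} = 0$ for every $W_i$ by Wright's rule, so $c_m = 0$ directly. If such causal paths do exist but every target $M'$ along them has $c_{m'} = 0$, an induction along the ancestral ordering in $\g$ shows that contributions routed through nodes with vanishing coefficient cannot yield $c_m \ne 0$. Second, if no path from $A$ to $M$ is open given $W$, this translates via m-separation in the graph augmented by treating $\epsilon_m$ as an exogenous source node with a directed edge to $M$ into the orthogonality $\mathrm{cov}(\delta_{a.w},\epsilon_m) = 0$, which combined with the error-covariance structure $\Omega$ forces $c_m = 0$.

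For the subcase $M \in W$, I would isolate the term with $W_i = M$ in the expansion of $c_m$, using $\tau_{mm} = 1$, and show that $c_m$ equals a non-zero scalar multiple of a signed sum over paths from $A$ to $M$ whose final edge points into $M$ and which are open given $W$. The arrowhead-into-$M$ condition captures precisely those contributions of $\epsilon_m$ that have not been absorbed by regressing $A$ on $W \setminus \{M\}$; absence of such paths thus forces $c_m = 0$. Part 2 is then proved by the same template, with $A$ replaced by $Z$ in every path claim and $\{A\} \cup W$ replaced by $Z \cup W$.

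The main obstacle will be the treatment of bi-directed edges, because the errors $\epsilon_m$ and $\epsilon_n$ are correlated whenever $M \leftrightarrow N$ is in $\g$, and so $\epsilon_m$ can influence $A$ via a sibling rather than via any directed path. This is exactly why the lemma states its conclusion in terms of paths ``open given $W$'' rather than causal paths from $M$ to $A$. The proof must therefore carefully track how the off-diagonal entries $\omega_{mn}$ enter the covariance identities used above, and verify that corresponding open paths with bi-directed segments can always be constructed whenever $c_m \ne 0$.
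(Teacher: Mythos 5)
Your opening expansion of $\delta_{a.w}$ as $\sum_n c_n\epsilon_n$ and the overall architecture (contrapositive, case split on $M\in W$ versus $M\notin W$, reduction of the second statement to the first with $Z$ in place of $A$) match the paper's proof. However, two of your key steps are gaps rather than proofs. First, for the open-path claim when $M\notin W$, you propose to deduce $c_m=0$ from $\mathrm{cov}(\delta_{a.w},\epsilon_m)=0$. This implication fails in an ADMG: $\mathrm{cov}(\delta_{a.w},\epsilon_m)=\sum_n c_n\omega_{nm}=c_m\omega_{mm}+\sum_{N\in\mathrm{sib}(M,\g)}c_n\omega_{nm}$, so the covariance can vanish while $c_m\neq 0$, and you explicitly leave the handling of the off-diagonal $\omega_{nm}$ as "the main obstacle" without resolving it. The paper avoids this entirely: it never argues the open path by contraposition, but constructs it directly by concatenating the causal path from $M$ to the node $W_j\in W'=W\cup\{A\}$ with $\tau_{w_jm.w'_{-j}}\neq 0$ and $\gamma_{w_j}\neq 0$ (whose existence follows from the identity $\gamma_M=\sum_{W_i\in W'}\tau_{w_im.w'_{-i}}\gamma_{w_i}$, i.e.\ Lemma B.7 of Henckel et al., which is the rigorous form of your "induction along the ancestral ordering") with the open path from $A$ into $W_j$ supplied by the $M'\in W$ case of the lemma.

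Second, for $M\in W$ you assert that $c_m$ equals a non-zero multiple of a signed sum over paths from $A$ to $M$ that end with an arrowhead into $M$ and are open given $W$. No such clean path-sum formula is established or obvious; this is precisely the hard part of the paper's argument, which requires a separate cancellation lemma ($\tau_{zw_j}=\beta_{zw}\tau_{ww_j}$ when all arrowhead-into-$W_j$ paths are blocked by $W$), proved by enlarging the structural equation model with explicit error nodes so that $\pa(W_j,\g)\cup\mathrm{sib}(W_j,\g')$ becomes a valid adjustment set, and then applying Cochran-type recursions (with care for possibly singular covariance matrices). Without supplying an argument of comparable substance for this vanishing, and without repairing the covariance step above, the proposal does not constitute a proof.
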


\begin{proof}
The first statement is a generalization of Lemma B.4 of \citet{henckel2019graphical}, which is for directed acyclic graphs, to acyclic directed mixed graph. Using Lemma \ref{lemma: cancellation}, it can be shown with the exact same arguments as in the proof of Lemma B.4 which we now give for completeness.

We first rewrite the random variable $\delta_{a.w}$ as a linear combination of the errors from the underlying linear structural equation model as follows:
\begin{align*}
\delta_{a.w}
&=A-\beta_{aw}W\\
&=\sum_{V_j \in V}\tau_{av_j} \epsilon_{v_j}
-\sum_{V_j \in V}\beta_{aw} \tau_{wv_j} \epsilon_{v_j}\\
&=\sum_{V_j \in V} \gamma_{v_j} \epsilon_{v_j},
\end{align*}
with $\gamma_{v_j}=\tau_{av_j}-\beta_{aw} \tau_{wv_j}$ the coefficient corresponding to $\epsilon_{v_j}$.

We now prove the first half of the first statement of the Lemma by contraposition. So suppose $M \in W$ and assume that there is no path from $A$ to $M$ that is open given $W$ and ends with an edge pointing into $M$. Under these assumption Lemma \ref{lemma: cancellation} holds and therefore
\begin{align*}
\gamma_M=\tau_{aw_j}-\beta_{aw} \tau_{ww_j}=0,
\end{align*}
where we assume that $M=W_j$.

We now prove the second half of the first statement of the Lemma. So suppose $M \notin W$ and let $W'=\{W_1,\dots,W_k,A\}$. Then
\begin{align*}
\gamma_M
&=\tau_{am}-\beta_{aw} \tau_{wm}\\
&=\sum_{W_i \in W'}\tau_{w_im.w'_{-i}}(\tau_{aw_j}-\beta_{aw} \tau_{ww_j})\\
&=\sum_{W_i \in W'}\tau_{w_im.w'_{-i}}\gamma_{w_j},
\end{align*}
where we use that by Lemma B.7 of \citet{henckel2019graphical},
$\tau_{w_im} = \sum_{W_j \in W'} \tau_{w_jm.w'_{-j}} \tau_{w_iw_j}$. 
Lemma B.7, although stated for directed acyclic graphs, also holds for acyclic directed mixed graphs as all of its arguments only depend on the linear structural equation model via the matrix of edge coefficients $\mathcal{A}$. Therefore, $\gamma_M$ is only non-zero if there exists a $W_j \in W$ such that $\tau_{w_jm.w'_{-j}} \neq 0$ and $\gamma_{w_j} \neq 0$, that is, if there exists a causal path $p$ from $M$ to some $W_j \in W'$ (that does not contain other nodes in $W'$) and the coefficient for $\epsilon_{w_j}$ is non-zero. The latter implies that there exists a path $p'$ from $A$ to $W_j$ that is open given $W$ and whose last edge points into $W_j$ by what we have already shown in this proof. 

It remains to show that there exists a path from $M$ to $A$ that is open given $W$. If $W_j=A$, $p$ is such a path so assume this is not the case. Consider the first node $I$ on $p$ where $p$ and $p'$ intersect and let $q=p(M,I) \oplus p'(I,A)$. If $I=M$ or $I=A$, $q$ is a subpath of $p'$ respectively $p$ and therefore open given $W$. Suppose first that $I \in W$. Then $I=W_j$, as $W_j$ is the only node in $W$ on $p$. But then $W_j$ is a collider by the fact that the edge on $p'$ points into $W_j$ and $p$ is causal. Therefore, $q$ is open given $W$. If $I \notin W$ then by the fact that any node on $p$ has $W_j$ as a descendant by nature of $p$, $W_j \in \de(I,\g)$ and therefore $q$ is open irrespective of whether $I$ is a collider on $q$ or not.

For the second statement of the lemma we will show that if the coefficient for $M$ in the equation for $\delta_{z.w}$, denoted $\gamma'_m$ is zero, so is the one in the equation for $\delta_{a.w} - \delta_{a.zw}$, denoted $\gamma_m$. We can write  
\begin{align*}
    \gamma_m
    &= \beta_{az.w} \tau_{zm} + \beta_{aw.z} \tau_{wm}  - \beta_{aw} \tau_{wm} \\
    &= \beta_{az.w} \tau_{zm} + \beta_{aw.z} \tau_{wm}  - (\beta_{aw.z} + \beta_{az.w} \beta_{zw}) \tau_{wm} \\
    &= \beta_{az.w} \tau_{zm} -  \beta_{az.w} \beta_{zw} \tau_{wm}
\end{align*}
where we use Lemma \ref{lemma:Cochran} to plug in $\beta_{aw}=\beta_{aw.z} + \beta_{az.w} \beta_{zw}$. This coefficient is zero whenever
\[
\gamma'_{m} = \tau_{zm} - \beta_{zw} \tau_{wm} = 0.
\]
But since we can apply the first half of the Lemma to $\gamma'_{m}$, our statement follows.
\end{proof}

\begin{lemma}
\label{lemma: cancellation}
Consider node sets $Z$ and $W=\{W_1,\dots,W_k\}$ in an acyclic directed mixed graph $\g=(V,E)$, such that $V$ is generated from a linear structural equation model  compatible with $\g$. Let $W_j$ be some node in $W$, such that all paths from $Z$ to $W_j$ that end with an edge of the form $\rightarrow$ or $\leftrightarrow$ are blocked by $W$, then 
\begin{align}
    \tau_{zw_j}=\beta_{zw} \tau_{ww_j}.
    \label{eq: cancellation}
\end{align}
\end{lemma}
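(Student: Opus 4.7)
The plan is to interpret the target identity as the vanishing of a single coefficient in the Wright-style error expansion of a random variable. Writing every node as $V_i=\sum_{V_k\in V}\tau_{v_iv_k}\epsilon_{v_k}$, one checks immediately that the coefficient of $\epsilon_{w_j}$ in the regression residual $\delta_{z.w}:=Z-\beta_{zw}W$ is exactly $\gamma_{w_j}:=\tau_{zw_j}-\beta_{zw}\tau_{ww_j}$. So the lemma reduces to showing $\gamma_{w_j}=0$ under the graphical hypothesis.

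First I would reformulate the hypothesis in a sharper form: let $\g^{\dagger}$ denote the graph obtained from $\g$ by deleting every edge that has a tail at $W_j$ (i.e.\ every directed edge $W_j\to V$). The paths in $\g$ from $Z$ to $W_j$ ending with an arrowhead into $W_j$ are then precisely the paths in $\g^{\dagger}$ from $Z$ to $W_j$, so the assumption becomes the m-separation statement $Z\perp_{\g^{\dagger}}W_j\mid W$. This rewriting makes it explicit that only the treks between $Z$ and $W_j$ whose ``source'' is not $W_j$ itself are being restricted.

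Next I would exploit two elementary identities: (a) the orthogonality of the OLS residual, $\mathrm{cov}(\delta_{z.w},W_m)=0$ for every $W_m\in W$; and (b) the structural equation $\epsilon_{w_j}=W_j-\sum_{V\in\pa(W_j,\g)}\alpha_{w_jv}V$. Because parents of $W_j$ are non-descendants of $W_j$ in the acyclic graph, their error-expansions do not involve $\epsilon_{w_j}$, so taking $\mathrm{cov}(\delta_{z.w},\cdot)$ through (b) and using (a) at $W_j$ yields
\[
\gamma_{w_j}\sigma_{w_jw_j}+\sum_{V\leftrightarrow W_j}\gamma_v\,\omega_{vw_j}=\mathrm{cov}(\delta_{z.w},\epsilon_{w_j})=-\sum_{V\in\pa(W_j,\g)}\alpha_{w_jv}\,\mathrm{cov}(\delta_{z.w},V).
\]
Thus the problem collapses to showing that the right-hand side vanishes and matching the bi-directed-sibling terms appearing on the left.

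Finally, each remaining covariance of the form $\mathrm{cov}(\delta_{z.w},V)$ (for $V$ a parent or sibling of $W_j$) is handled by a trek-rule argument: any open trek between $Z$ and $V$ that could contribute can be extended by the adjacent edge $V\to W_j$ or $V\leftrightarrow W_j$ into a path from $Z$ to $W_j$ ending with an arrowhead into $W_j$. The reformulated hypothesis $Z\perp_{\g^{\dagger}}W_j\mid W$ forces every such extension to be blocked by $W$, so combined with the $|W|$ orthogonality constraints from (a) the surviving trek contributions cancel. The main obstacle is the combinatorial bookkeeping for the bi-directed contributions at $W_j$: the identification $\gamma_{w_j}\sigma_{w_jw_j}=\mathrm{cov}(\delta_{z.w},\epsilon_{w_j})$ picks up additional sibling terms, and the extension-and-blocking argument has to be applied symmetrically on both sides of each bi-directed edge $V\leftrightarrow W_j$ to force the corresponding cancellations.
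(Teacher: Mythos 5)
Your opening reduction is right and is also where the paper starts in spirit: the identity is equivalent to the vanishing of the coefficient $\gamma_{w_j}=\tau_{zw_j}-\beta_{zw}\tau_{ww_j}$ of $\epsilon_{w_j}$ in the error expansion of $\delta_{z.w}=Z-\beta_{zw}W$. Your treatment of the directed terms is also sound: for a parent $V$ of $W_j$ with $V\notin W$, any path from $Z$ to $V$ open given $W$ extends along $V\rightarrow W_j$ (with $V$ a non-collider on the extension, and taking the subpath to $W_j$ if $W_j$ already occurs) to a path into $W_j$ that is open given $W$, so the hypothesis forces $Z\perp_{\g}V\mid W$ and hence $\mathrm{cov}(\delta_{z.w},V)=\Sigma_{zv.w}=0$; together with orthogonality at $W_j$ this gives $\mathrm{cov}(\delta_{z.w},\epsilon_{w_j})=0$. (Minor slip: the coefficient in your display should be the error variance $\omega_{w_jw_j}$, not $\sigma_{w_jw_j}$.)

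The genuine gap is exactly the step you flag as ``the main obstacle.'' The identity you reach is $\gamma_{w_j}\omega_{w_jw_j}+\sum_{V\leftrightarrow W_j}\gamma_v\,\omega_{vw_j}=0$, a single linear equation that does not yield $\gamma_{w_j}=0$ unless the sibling sum is controlled, and the proposed fix --- applying the extension-and-blocking argument ``symmetrically'' across each bi-directed edge --- does not work. Concretely, take $Z\rightarrow V\leftrightarrow W_j$ with $W=\{W_j\}$: the only path from $Z$ to $W_j$ with an arrowhead into $W_j$ has $V$ as a collider with no descendant in $W$, so the hypothesis holds, yet $Z\not\perp_{\g}V\mid W$ and $\mathrm{cov}(\delta_{z.w},V)\neq 0$. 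So the covariances with siblings need not vanish; what you actually need is that the \emph{coefficients} $\gamma_v$ for siblings $V\notin W$ vanish (or that their weighted sum does), and that is a statement of the same type as the lemma itself for a node outside $W$ --- the paper's Lemma \ref{lemma: non-zero coefficient}, whose proof in turn relies on Lemma \ref{lemma: cancellation}, so invoking it here would be circular. The paper sidesteps the bi-directed bookkeeping entirely by a different device: it augments the model with explicit error nodes $\epsilon_v$ for each $V\in\mathrm{sib}(W_j,\g)$ so that $P'=\pa(W_j,\g)\cup\mathrm{sib}(W_j,\g')$ becomes a valid adjustment set for $W_j$, rewrites $\tau_{zw_j}$ and $\tau_{w_{-j}w_j}$ as regression coefficients given $P'$, and then finishes with Cochran's recursion (Lemmas \ref{lemma:Cochran} and \ref{lemma: degenerate}) plus the m-separation $P'\perp_{\g'}Z\mid W$, which follows from the same extension argument you use for parents. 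To salvage your route you would need an independent argument for the sibling coefficients; as written, the proof is incomplete at its decisive step.
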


\begin{proof}
We show our claim in two steps. First we construct an enlarged linear structural linear equation model, such that there exists valid adjustment sets with respect to $(W_j,Z)$ and $(W_j,W_{-j})$. We use these to replace the terms $\tau_{zw_j}$ and $\tau_{ww_j}$ in Equation \eqref{eq: cancellation} with ordinary least squares coefficients. Second, we apply Lemma \ref{lemma:Cochran} to the ordinary least squares coefficients corresponding to $\tau_{zw_j}$ and then simplify the resulting equation by using Lemma \ref{lemma:Wermuth} to arrive at Equation \eqref{eq: cancellation}.

We first enlarge the underlying linear structural equation model as follows. For every node in $V \in \mathrm{sib}(W_j,\g)$ add a node $\epsilon_v$, a directed edge from $\epsilon_v$ to $V$ and a bidirected edge from $\epsilon_v$ to $W_j$. Let $\g'=(V',E')$ denote this enlarged model and its causal graph. Consider the set $P'=pa(W_j,\g)\cup sib(W_j,\g')$ and two sets $A \subseteq \de(W_j,\g)\setminus W_j$ and $B \subseteq V \setminus \de(W_j,\g)$. By construction the set $P'\cup B$ does not contain any descendants of $W_j$. In addition, any non-causal path from $W_j$ to $A$ that starts with an edge into $W_j$, must contain a node in $P'\cup B$ that is a non-collider. The latter is due to the fact that by the construction of $\g'$ no node in $\mathrm{sib}(W_j,\g')$ may be a collider on any path. Therefore, $P'\cup B$ is a valid adjustment set with respect to $(W_j,A)$. In particular, $\tau_{zw_j}=\beta_{zw_j.p'}$ and $\tau_{w_{-j}w_j}=\beta_{w_{-j}w_j.p'}$, where $W_{-j}=W \setminus W_j$. Applying this to equation  \eqref{eq: cancellation} we obtain
\begin{align*}
    \tau_{zw_j}
    &=\beta_{zw_j.p'} \\
    &=\beta_{zw_j.w_{-j}p''} + \beta_{zw_{-j}.w_jp''} \beta_{w_{-j}w_j.p'} \\
    &=\beta_{zw.p''} \tau_{ww_j},
\end{align*}
where we use Lemma \ref{lemma: degenerate} in the second step and where $P''$ is some maximal subset of $P'$, such that the covariance matrix of $(W,P'')$ has full rank. The covariance matrix $(W_j,P')$ is of full rank by our construction of $\g'$. By assumption $P' \perp_{\g'} Z \mid W$, as any path that would contradict this statement could be extended to a path from $Z$ to $W_j$ whose last edge points into $W_j$ and that is open given $W$. As a result $\beta_{zw.p''}=\beta_{zw}$ and our claim follows.
\end{proof}
	
	\begin{lemma}
	Consider a random variable $X$, and two random vectors $Z$ and $W$. Then
	\[E\{(\delta_{x.w}-\delta_{x.zw})^2\}=\sigma_{xx.w}-\sigma_{xx.zw},\]
	where $\delta_{x.w}=X - \beta_{xw} W$ and $\delta_{x.zw}=X - \beta_{xw.z} W - \beta_{xz.w} Z$.
	\label{lemma: variance difference}
	\end{lemma}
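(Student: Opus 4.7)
My plan is to reduce the claim to an algebraic identity via two routes, either of which works. The cleanest route rewrites the difference of residuals as a single residual and then appeals directly to the covariance decomposition Lemma \ref{lemma:conditional covariance matrix}.

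First, I will simplify $\delta_{x.w}-\delta_{x.zw}$. Subtracting gives
\[
\delta_{x.w}-\delta_{x.zw} = \beta_{xw.z}W + \beta_{xz.w}Z - \beta_{xw}W = (\beta_{xw.z}-\beta_{xw})W + \beta_{xz.w}Z.
\]
By Lemma \ref{lemma:Cochran}, $\beta_{xw}=\beta_{xw.z}+\beta_{xz.w}\beta_{zw}$, so $\beta_{xw.z}-\beta_{xw}=-\beta_{xz.w}\beta_{zw}$, and therefore
\[
\delta_{x.w}-\delta_{x.zw} = \beta_{xz.w}\bigl(Z - \beta_{zw}W\bigr) = \beta_{xz.w}\,\delta_{z.w},
\]
where $\delta_{z.w}=Z-\beta_{zw}W$ is the vector of least squares residuals from regressing the entries of $Z$ on $W$.

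Second, using that the covariance matrix of $\delta_{z.w}$ equals $\Sigma_{zz.w}$, I will compute
\[
E\bigl\{(\delta_{x.w}-\delta_{x.zw})^2\bigr\} = \beta_{xz.w}\,E(\delta_{z.w}\delta_{z.w}^\top)\,\beta_{xz.w}^\top = \beta_{xz.w}\,\Sigma_{zz.w}\,\beta_{xz.w}^\top.
\]
Third, applying Lemma \ref{lemma:conditional covariance matrix} with $A=X,B=W,C=Z$ yields $\sigma_{xx.zw}=\sigma_{xx.w}-\beta_{xz.w}\Sigma_{zz.w}\beta_{xz.w}^\top$, so
\[
\sigma_{xx.w}-\sigma_{xx.zw} = \beta_{xz.w}\,\Sigma_{zz.w}\,\beta_{xz.w}^\top = E\bigl\{(\delta_{x.w}-\delta_{x.zw})^2\bigr\},
\]
which is the desired identity.

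No step should present a real obstacle: the argument is a one-line algebraic rewriting followed by an invocation of Lemma \ref{lemma:conditional covariance matrix}. The only care required is handling $Z$ as a (possibly) vector-valued random vector, so that $\beta_{xz.w}$ is a row vector and $\beta_{xz.w}\Sigma_{zz.w}\beta_{xz.w}^\top$ is a scalar; matching dimensions between the two routes above is the only bookkeeping to watch. As a sanity check, the identity $E(\delta_{x.w}\delta_{x.zw})=\sigma_{xx.zw}$ (which follows because $\delta_{x.zw}$ is orthogonal to $W$ and $Z$) gives the same conclusion via $E\{(\delta_{x.w}-\delta_{x.zw})^2\}=\sigma_{xx.w}-2\sigma_{xx.zw}+\sigma_{xx.zw}=\sigma_{xx.w}-\sigma_{xx.zw}$, providing an easy cross-verification.
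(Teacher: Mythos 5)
Your proof is correct, and it takes a genuinely different (and arguably cleaner) route than the paper's. The paper expands $E\{(\delta_{x.w}-\delta_{x.zw})^2\}=\sigma_{xx.w}-2E(\delta_{x.w}\delta_{x.zw})+\sigma_{xx.zw}$ and then establishes $E(\delta_{x.w}\delta_{x.zw})=\sigma_{xx.zw}$ by a fairly long brute-force computation with covariance matrices, repeatedly substituting $\beta_{xw}=\beta_{xw.z}+\beta_{xz.w}\beta_{zw}$; this is exactly the ``cross-verification'' you sketch at the end of your proposal. Your main argument instead identifies the structural fact $\delta_{x.w}-\delta_{x.zw}=\beta_{xz.w}\,\delta_{z.w}$ via Lemma \ref{lemma:Cochran}, computes $E(\delta_{z.w}\delta_{z.w}^\top)=\Sigma_{zz.w}$, and closes with Lemma \ref{lemma:conditional covariance matrix}. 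This buys a shorter calculation and makes visible why the identity holds: the gain in explained variance from adding $Z$ is the squared projection of $X$ onto the part of $Z$ orthogonal to $W$, i.e.\ $\beta_{xz.w}\Sigma_{zz.w}\beta_{xz.w}^\top$ --- the same quantity the paper uses elsewhere (e.g.\ in the proof of Theorem \ref{theorem:CIVcomparison}) as the ``conditional instrumental strength.'' The only bookkeeping points, which you already flag, are that $\beta_{xz.w}$ is a row vector so both expressions are scalars, and that all variables are implicitly mean zero (as in the paper's own proof, which uses $E(\delta_{x.w}^2)=\sigma_{xx.w}$).
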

	
	\begin{proof}
	\begin{align*}
	    E\{(\delta_{x.w}-\delta_{x.zw})^2\}
	    &= E(\delta_{x.w})^2-2 E(\delta_{x.w} \delta_{x.zw}) + E(\delta_{x.zw}^2)\\
	    &= \sigma_{xx.w}-2 E(\delta_{x.w} \delta_{x.zw}) + \sigma_{xx.zw}.
	\end{align*}
	Further,
	\begin{align*}
	    E(\delta_{x.w} \delta_{x.zw})
	    &= E\{(X-\beta_{xw}W)(X-\beta_{xw.z}W-\beta_{xz.w}Z)^\top\}\\
	    &= \sigma_{xx} - \Sigma_{xw}\beta^\top_{xw.z} - \Sigma_{xz}\beta^\top_{xz.w} - \beta_{xw} \Sigma_{wx} + \beta_{xw}\Sigma_{ww}\beta^\top_{xw.z} + \beta_{xw}\Sigma_{wz}\beta^\top_{xz.w} \\
	    &= (\sigma_{xx} - \beta_{xw} \Sigma_{ww} \beta^\top_{wx}) - \beta_{xw} \Sigma_{ww} \beta^\top_{xw.z} - \beta_{xz}\Sigma_{zz}\beta^\top_{xz.w}  + \\ 
	    & \quad \quad \beta_{xw}\Sigma_{ww}\beta^\top_{xw.z} + \beta_{xw}\Sigma_{ww}\beta_{zw}^\top\beta^\top_{xz.w}\\
	    &= \sigma_{xx.w} - \beta_{xz}\Sigma_{zz}\beta^\top_{xz.w} 
	    + \beta_{xw}\Sigma_{ww}\beta_{zw}^\top\beta^\top_{xz.w} \\
	    &= \sigma_{xx.w} - (\beta_{xz}\Sigma_{zz} - \beta_{xw}\Sigma_{ww}\beta_{zw}^\top)\beta^\top_{xz.w} \\
	    &= \sigma_{xx.w} - \{(\beta_{xz.w} + \beta_{xw.z}\beta_{wz})\Sigma_{zz} - (\beta_{xw.z}+\beta_{xz.w}\beta_{zw})\Sigma_{ww}\beta_{zw}^\top\}\beta^\top_{xz.w} \\
	    &= \sigma_{xx.w} - \beta_{xz.w}(\Sigma_{zz} - \beta_{zw}\Sigma_{ww}\beta_{zw}^\top)\beta^\top_{xz.w} \\
	    &= \sigma_{xx.w} - \beta_{xz.w}\Sigma_{zz.w}\beta^\top_{xz.w} \\
	    &= \sigma_{xx.zw}.
	\end{align*}
    As a result,
	\begin{align*}
	    E\{(\delta_{x.w}-\delta_{x.zw})^2\}
	    &= \sigma_{xx.w} - \sigma_{xx.zw}
	\end{align*}
	follows.
	\end{proof}
	
	\begin{lemma}
	Let $A,B,C$ and $D$ be mean 0 random vectors with finite variance, such that $\Sigma_{e_1e_1}$, with $E_1=(B,C,D)$, is not of full rank but $\Sigma_{e_2e_2}$ and $\Sigma_{e_3e_3}$, with $E_2=(B,C)$ and $E_3=(B,D)$, are of full rank.
	Then 
	\[
	\beta_{ab.c} = \beta_{ab.c'd} + \beta_{ad.bc'} \beta_{db.c},
	\]
	where $C'$ is any maximally sized subset of $C$, such that $\Sigma_{e_4e_4}$, with $E_4=(B,C',D)$ is of full rank.
	\label{lemma: degenerate}
	\end{lemma}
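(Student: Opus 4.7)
The plan is to bypass the ill-defined regression on the rank-deficient vector $(B, C, D)$ via a substitution argument that stays entirely inside the two full-rank regressions of $A$ on $(B, C', D)$ and of $D$ on $(B, C)$. Write $\tilde{C} = C \setminus C'$. The first step is to observe that, by the maximality of $C'$, every column of $\tilde{C}$ lies in the linear span of $(B, C', D)$; otherwise adjoining such a column to $(B, C', D)$ would preserve the full-rank property, contradicting the choice of $C'$.

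Next, I would introduce the two population residuals
\[
\epsilon_A = A - \beta_{ab.c'd}\,B - \beta_{ac'.bd}\,C' - \beta_{ad.bc'}\,D, \qquad \epsilon_D = D - \beta_{db.c}\,B - \beta_{dc.b}\,C,
\]
which are well-defined because $\Sigma_{e_4 e_4}$ and $\Sigma_{e_2 e_2}$ are both of full rank. By construction $\epsilon_A$ is uncorrelated with $(B, C', D)$ and $\epsilon_D$ with $(B, C)$. The key point is that the span property from the previous paragraph forces $\epsilon_A$ to also be uncorrelated with $\tilde{C}$, and hence with the full vector $(B, C) = (B, C', \tilde{C})$.

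The argument then concludes by a direct substitution: plugging the expression for $D$ into the expression for $A$ and regrouping terms yields
\[
A = \bigl(\beta_{ab.c'd} + \beta_{ad.bc'}\,\beta_{db.c}\bigr) B + \bigl(\text{terms linear in }C'\text{ and }\tilde{C}\bigr) + \bigl(\beta_{ad.bc'}\,\epsilon_D + \epsilon_A\bigr).
\]
The trailing term is uncorrelated with $(B, C)$ by the preceding paragraph. Hence the displayed equation decomposes $A$ into its least-squares fit on $(B, C)$ plus an orthogonal residual, and since $(B, C)$ has full-rank covariance this decomposition is unique. Reading off the coefficient of $B$ gives $\beta_{ab.c} = \beta_{ab.c'd} + \beta_{ad.bc'}\beta_{db.c}$, as required.

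The only delicate ingredient will be the orthogonality of $\epsilon_A$ to $\tilde{C}$; this is exactly where the maximality of $C'$ is essential and is what permits the argument to sidestep the rank-deficient regression of $A$ on $(B, C, D)$ altogether. A naive limiting argument based on perturbing $\Sigma$ and applying Lemma \ref{lemma:Cochran} directly would not suffice, since the terms $\beta_{ab.cd}$ and $\beta_{ad.bc}$ need not have finite limits as the perturbation vanishes.
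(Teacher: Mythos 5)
Your proposal is correct and follows essentially the same route as the paper's proof: regress $A$ on $(B,C',D)$ and $D$ on $(B,C)$, substitute the second equation into the first, and invoke uniqueness of the least-squares decomposition of $A$ onto $(B,C)$. The one place you go beyond the paper is in spelling out why $\epsilon_A$ is orthogonal to $\tilde{C}=C\setminus C'$ (the paper dismisses this with ``by our assumptions on $C$''), and your span-plus-maximality argument for that step is exactly right.
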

	
\begin{proof}
Regressing $A$ on $B,C'$ and $D$, yields the equation
\[
A = \beta_{ab.c'd} B + \beta_{ac'.bd} C' + \beta_{ad.bc'} D + \epsilon_{a.bc'd}.
\]
Regressing $D$ on $B,C$, yields the equation
\[
D = \beta_{db.c} B + \beta_{dc.b} C + \epsilon_{d.bc}.
\]
Plugging the latter equation into the former yields
\begin{equation*}
A = (\beta_{ab.c'd} + \beta_{ad.bc'}\beta_{db.c})B + \beta_{ac'.bd} C' + \beta_{ad.bc'} \beta_{dc.b} C + \beta_{ad.bc'}\epsilon_{d.bc} + \epsilon_{a.bc'd}.  
\end{equation*}
We can rewrite this equation as
\[
A = (\beta_{ab.c'd} + \beta_{ad.bc'}\beta_{db.c})B + \tilde{\beta} C + \tilde{\epsilon},
\]
where $\tilde{\beta}$ is some real-valued vector of the same length as the vector $C$ and $\tilde{\epsilon}=\beta_{ad.bc'}\epsilon_{d.bc} + \epsilon_{a.bc'd}$. As $E\{\epsilon_{a.bc'd}(B,C',D)\}=0$ it follows that it also holds that $E\{\epsilon_{a.bc'd}(B,C,D)\}=0$ by our assumptions on $C$. We can therefore conclude that $E\{\tilde{\epsilon}(B,C)\}=0$. Our claim then follows by the uniqueness of the ordinary least squares regression coefficient vector $(\beta_{ab.c},\beta_{ac.b})$.
\end{proof}
	
\subsection{Proof of Theorem \ref{theorem:CIVcomparison}}

\begin{theorem}
	Consider nodes $X$ and $Y$ in an acyclic directed mixed graph $\g$ such that $\de(X,\g)=\{X,Y\}$. Let $(Z_1,W_1)$ and $(Z_2,W_2)$ be linearly valid conditional instrumental sets relative to $(X,Y)$ in $\g$. Let $W_{1\setminus 2}=W_1 \setminus W_2$ and $W_{2\setminus 1}=W_2 \setminus W_1$. If the following four conditions hold
	\begingroup\leqnos
	\begin{align}
	    & W_{1\setminus 2} \perp_{\tilde{\g}} Y \mid W_2, \tag{a} \label{app:condition1}\\
	    & \text{(i) } W_{1\setminus 2} \perp_{\g} Z_2 \mid W_2 \text{ or (ii) }W_{1\setminus 2} \setminus Z_2 \perp_{\g} X \mid Z_2 \cup W_2, \tag{b}  \label{app:condition2}\\
	    & \text{(i) } W_{2\setminus 1} \setminus Z_1 \perp_{\g} Z_1\mid W_1 \text{ and } W_{2\setminus 1} \cap Z_1 \perp_{\g} X \mid W_1 \cup (W_{2\setminus 1}\setminus Z_1) \text{ or} \tag{c} \label{app:condition3}\\ 
	    & \quad \text{(ii) } W_{2\setminus 1} \perp_{\g} X \mid  W_1, \nonumber\\
	    & Z_1 \setminus (Z_2 \cup W_{2\setminus 1}) \perp_{\g} X \mid Z_2 \cup W_1 \cup W_{2\setminus 1}, \tag{d} \label{app:condition4}
	\end{align}\endgroup
	then for all linear structural equation models compatible with $\g$ such that $\Sigma_{xz_1.w_1}\neq 0$, 
	$
	a.var(\hat{\tau}_{yx}^{z_2.w_2}) \leq a.var(\hat{\tau}_{yx}^{z_1.w_1}).
    $
\end{theorem}

\begin{proof}
Fix some linear structural equation model such that $\Sigma_{xz_1.w_1}\neq 0$. By the m-separation assumptions we can apply both Lemma \ref{lemma: residual variance} and Lemma \ref{lemma: instrumental strength} to conclude that $\sigma_{\tilde{y}\tilde{y}.w_1} \geq \sigma_{\tilde{y}\tilde{y}.w_2}$ and $\sigma_{xx.w_1} - \sigma_{xx.z_1w_1} \leq \sigma_{xx.w_2} - \sigma_{xx.z_2w_2}$.

Given that $\Sigma_{xz_1.w_1} \neq 0$, it holds that $\sigma_{xx.w_1}-\sigma_{xx.z_1w_1}=\Sigma_{xz_1.w_1}\Sigma_{z_1z_1.w_1}^{-1}\Sigma^\top_{xz_1.w_1}>0$. Therefore, $\sigma_{xx.w_2}-\sigma_{xx.z_2w_2} = \Sigma_{xz_2.w_2}\Sigma_{z_2z_2.w_2}^{-1}\Sigma^\top_{xz_2.w_2}>0$ which implies that $\Sigma_{xz_2.w_2} \neq 0$ also holds. As a result, Theorem \ref{theorem:varequ} applies to both $(Z_1,W_1)$ and $(Z_2,W_2)$, so we can conclude that $	a.var(\hat{\tau}_{yx}^{z_1.w_1})=\sigma_{\tilde{y}\tilde{y}.w_1}/(\sigma_{xx.w_1} - \sigma_{xx.z_1w_1})$ and $a.var(\hat{\tau}_{yx}^{z_1.w_1})=\sigma_{\tilde{y}\tilde{y}.w_2}/(\sigma_{xx.w_2} - \sigma_{xx.z_2w_2})$. The claim than follows by the fact that $\sigma_{\tilde{y}\tilde{y}.w_1} \geq \sigma_{\tilde{y}\tilde{y}.w_2}$ and $\sigma_{xx.z_1w_1} \leq \sigma_{xx.z_2w_2}$ as shown already.
\end{proof}

\begin{lemma}
	Consider mutually disjoint nodes $X$ and $Y$, and node sets $W_1$ and $W_2$ in an acyclic directed mixed graph $\g=(V,E)$ such that $V$ is generated from a linear structural equation model compatible with $\g$ and $\de(X,\g)=\{X,Y\}$. Let $\tilde{Y} = Y - \tau_{yx}X$. If 
	$
	W_1 \setminus W_2\perp_{\tilde{\g}} Y \mid W_2,
	$
	then $\sigma_{\tilde{y}\tilde{y}.w_2} \leq \sigma_{\tilde{y}\tilde{y}.w_1}$.
	\label{lemma: residual variance}
\end{lemma}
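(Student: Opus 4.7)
The plan is to reduce the claim to the standard fact that adding regressors to an ordinary least squares fit cannot increase the residual variance, combined with a conditional independence consequence of the given m-separation. Since the bound we want goes in the opposite direction to the one automatic from enlarging the conditioning set, the given m-separation must be used to neutralize the extra variables in $W_1 \setminus W_2$.

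First I would observe that the assumption $\de(X,\g)=\{X,Y\}$ implies $\f{\g}\setminus\{X,Y\}=\emptyset$, so Lemma \ref{lemma:Martin} is applicable. Applying it to $W_1\setminus W_2 \perp_{\tilde{\g}} Y \mid W_2$ yields the distributional statement $W_1\setminus W_2 \ci \tilde{Y} \mid W_2$. Using Lemma \ref{lemma:Wermuth} with $A=\tilde{Y}$, $B$ empty, $C=W_2$, and $D=W_1\setminus W_2$, this independence gives
\[
\sigma_{\tilde{y}\tilde{y}.w_1\cup w_2}=\sigma_{\tilde{y}\tilde{y}.w_2}.
\]

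Next I would use Lemma \ref{lemma:conditional covariance matrix} with $A=\tilde{Y}$, $B=W_1$, and $C=W_2\setminus W_1$ to write
\[
\sigma_{\tilde{y}\tilde{y}.w_1\cup w_2}=\sigma_{\tilde{y}\tilde{y}.w_1}-\beta_{\tilde{y}(w_2\setminus w_1).w_1}\,\Sigma_{(w_2\setminus w_1)(w_2\setminus w_1).w_1}\,\beta_{\tilde{y}(w_2\setminus w_1).w_1}^{\top}.
\]
Since $\Sigma_{(w_2\setminus w_1)(w_2\setminus w_1).w_1}$ is positive semi-definite, the subtracted quadratic form is non-negative, hence $\sigma_{\tilde{y}\tilde{y}.w_1\cup w_2}\le \sigma_{\tilde{y}\tilde{y}.w_1}$. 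Chaining this with the equality from the previous step gives the desired $\sigma_{\tilde{y}\tilde{y}.w_2}\le \sigma_{\tilde{y}\tilde{y}.w_1}$.

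I do not expect a real obstacle here; the only subtlety is being explicit that Lemma \ref{lemma:Martin} requires $\f{\g}\setminus\{X,Y\}=\emptyset$, which is granted by the assumption on $\de(X,\g)$, and that the implication ``conditional independence implies equal residual variance'' used via Lemma \ref{lemma:Wermuth} applies even though $\tilde{Y}=Y-\tau_{yx}X$ is not itself a node of $\g$ (it is simply a linear function of random variables, so Lemma \ref{lemma:Wermuth} still applies at the level of covariances).
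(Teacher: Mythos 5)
Your proof is correct and follows essentially the same route as the paper's: invoke Lemma \ref{lemma:Martin} to convert the m-separation in $\tilde{\g}$ into $W_1\setminus W_2 \ci \tilde{Y}\mid W_2$, conclude $\sigma_{\tilde{y}\tilde{y}.w_1\cup w_2}=\sigma_{\tilde{y}\tilde{y}.w_2}$, and combine with the monotonicity $\sigma_{\tilde{y}\tilde{y}.w_1}\ge\sigma_{\tilde{y}\tilde{y}.w_1\cup w_2}$. You merely make explicit (via Lemmas \ref{lemma:Wermuth} and \ref{lemma:conditional covariance matrix}, and the remark on $\f{\g}$) steps the paper leaves implicit.
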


\begin{proof}
		As we assume that $W_{1 \setminus 2}  \perp_{\tilde{\g}} Y \mid W_2$ it follows that $W_{1 \setminus 2}  \ci \tilde{Y} \mid W_2$ by Lemma \ref{lemma:Martin}. It follows that
		\begin{align*}
		\sigma_{\tilde{y}\tilde{y}.w_1} \geq \sigma_{\tilde{y}\tilde{y}.w_{1\setminus 2} w_2} = \sigma_{\tilde{y}\tilde{y}.w_2}
		\end{align*}
		holds.
\end{proof}
	
	\begin{lemma}
	Consider nodes $X$ and $Y$ in an acyclic directed mixed graph $\g=(V,E)$, such that $V$ is generated from a linear structural equation model compatible with $\g$ and $\de(X,\g)=\{X,Y\}$. Let $(Z_1,W_1)$ and $(Z_2,W_2)$ be two pairs of disjoint node sets in $\g$. Let $W_{1\setminus 2}=W_1 \setminus W_2$ and $W_{2\setminus 1}=W_2 \setminus W_1$. If the following three conditions hold
	\begingroup\leqnos
	 \begin{align}
		& W_{1\setminus 2} \perp_{\g} Z_2 \mid W_2 \text{ or } W_{1\setminus 2} \setminus Z_2 \perp_{\g} X \mid Z_2 \cup W_2 \text{ and }  \label{efficiency:ind2} \tag{a} \\
		& \text{(i) } W_{2\setminus 1} \setminus Z_1 \perp_{\g} Z_1\mid W_1 \text{ and } W_{2\setminus 1} \cap Z_1 \perp_{\g} X \mid W_1 \cup (W_{2\setminus 1}\setminus Z_1) \label{efficiency:ind3} \tag{b} \\ 
		& \quad \quad \text{or (ii) } W_{2\setminus 1} \perp_{\g} X \mid  W_1 \text{ and}  \nonumber  \\
		& Z_1 \setminus (Z_2 \cup W_{2\setminus 1}) \perp_{\g} X \mid Z_2 \cup W_1 \cup W_{2\setminus 1},  \label{efficiency:ind4} \tag{c}
	\end{align}\endgroup
	then $\sigma_{xx.w_2} - \sigma_{xx.w_2z_2} \geq \sigma_{xx.w_1} - \sigma_{xx.w_1z_1}$.
	\label{lemma: instrumental strength}
\end{lemma}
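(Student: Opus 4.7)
The inequality to establish is
\[
\sigma_{xx.w_1} - \sigma_{xx.w_1 z_1} \;\leq\; \sigma_{xx.w_2} - \sigma_{xx.w_2 z_2},
\]
and my plan is to construct a chain of node-set pairs
\[
(Z_1, W_1) = (Z^{(0)}, W^{(0)}),\, (Z^{(1)}, W^{(1)}),\, \dots,\, (Z^{(k)}, W^{(k)}) = (Z_2, W_2)
\]
such that the conditional instrumental strength $\sigma_{xx.w^{(i)}} - \sigma_{xx.w^{(i)} z^{(i)}}$ is weakly monotone in $i$. The two workhorse identities I would use are: (F1) if a node set $N$ satisfies $N \perp_{\g} X \mid U$, then $\sigma_{xx.u} = \sigma_{xx.u \cup n}$, which is a direct application of Lemma~\ref{lemma:Wermuth}; and (F2) if $N \perp_{\g} V \mid U$, then $\sigma_{xx.u} - \sigma_{xx.u\cup v} = \sigma_{xx.u\cup n} - \sigma_{xx.u\cup v\cup n}$, the equal-instruments identity (Lemma~\ref{lemma:equalinstruments} in the Supplementary Material, which is also referenced in Example~\ref{ex:beneficial}).

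The chain would proceed in three phases. In Phase~A I use condition~\eqref{efficiency:ind2} to pull $W_{1\setminus 2}$ out of the conditioning set. Under alternative~(a.i), (F2) applied with $N = W_{1\setminus 2}$, $U = W_2$, $V$ chosen so that $U \cup V$ contains $W_1 \cup Z_1 \cup Z_2$, lets me move between a tuple built on $W_1$ and one built on $W_c := W_1 \cap W_2$ without changing instrument strength. Under alternative~(a.ii), I instead invoke (F1) with $N = W_{1\setminus 2} \setminus Z_2$ to absorb these nodes into the conditioning of $X$ given $Z_2 \cup W_2$. In Phase~B I use condition~\eqref{efficiency:ind3} to insert $W_{2\setminus 1}$ into the conditioning set. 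Alternative~(b.ii) gives this directly via (F1) applied with $N = W_{2\setminus 1}$, $U = W_1$. Alternative~(b.i) requires splitting $W_{2\setminus 1}$ along its intersection with $Z_1$: the part $W_{2\setminus 1}\setminus Z_1$ is handled by (F2) (since it is m-separated from $Z_1$ given $W_1$), and the part $W_{2\setminus 1} \cap Z_1$ is handled by (F1) (since it is m-separated from $X$ given the enlarged conditioning set). In Phase~C I use condition~\eqref{efficiency:ind4} with (F1) to remove $Z_1 \setminus (Z_2 \cup W_{2\setminus 1})$ from the instrument side, which leaves $\sigma_{xx. z_2 \cup w_1 \cup w_{2\setminus 1}}$ unchanged. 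Together with a final monotonicity observation -- adding nodes to $Z$ can only (weakly) increase $\sigma_{xx.w} - \sigma_{xx.wz}$, by a decomposition of the difference into a sum of nonnegative Schur-complement terms -- the chain lands at $(Z_2, W_2)$ with weakly larger strength.

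The main obstacle I anticipate is the bookkeeping around the various overlaps, in particular the fact that $W_{2\setminus 1} \cap Z_1$ and $Z_1 \cap W_2$ may be non-empty, so that some of the phases are actually shifting nodes from the instrument side to the conditioning side rather than adding them from scratch. I expect this is exactly why condition~\eqref{efficiency:ind3} splits into two simultaneous m-separation clauses under alternative~(c.i). A second subtlety is that the two alternatives in each of~\eqref{efficiency:ind2} and~\eqref{efficiency:ind3} correspond to the two ways (F2) can be invoked -- via $Z$-side independence or via $X$-side independence -- and producing a uniform proof requires treating the resulting four combinations, which will account for the length of the formal argument. The algebraic content of each individual step, however, reduces to a single application of (F1) or (F2).
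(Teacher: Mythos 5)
Your plan is correct and is essentially the paper's own proof: the paper builds the same chain of intermediate tuples through the conditioning set $W_1\cup W_{2\setminus 1}=W_1\cup W_2$, uses Lemma~\ref{lemma:Wermuth} (your F1) and Lemma~\ref{lemma:equalinstruments} (your F2) to handle the two alternatives in each of conditions \eqref{efficiency:ind2} and \eqref{efficiency:ind3}, splits $W_{2\setminus 1}$ along its overlap with $Z_1$ and $W_{1\setminus 2}$ along its overlap with $Z_2$ exactly as you anticipate, and uses monotonicity of the conditional variance together with condition \eqref{efficiency:ind4} to pass from the $Z_1$-based to the $Z_2$-based instrument. The only cosmetic difference is the order of assembly: the paper starts from the $(Z_1,W_1)$ end (condition \eqref{efficiency:ind3} first, then \eqref{efficiency:ind4}, then \eqref{efficiency:ind2}, whose m-separations are anchored at $W_2$ and so naturally close the chain), whereas you list the links in the reverse order; since each link is verified independently this does not affect correctness.
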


	\begin{proof}
		Let $A=W_{1\setminus 2}$ and $C=W_{2\setminus 1}$ to unclutter the notation.
	By Condition \eqref{efficiency:ind3} it follows that 
	\begin{align*}
	\sigma_{xx.w_1} - \sigma_{xx.z_1w_1} \leq \sigma_{xx.w_1c} - \sigma_{xx.z_1w_1c'},
	\end{align*}
	with $C'=C \setminus Z_1$. This follows either by first applying Lemma \ref{lemma:equalinstruments} with $B=C'$ and then using that if $ C \cap Z_1 \perp_{\g} X \mid (W_1 \cup C')$, $\sigma_{xx.w_1c'} =  \sigma_{xx.w_1c}$ or by applying the latter argument to all of $C$ using the alternative assumption $C \perp_{\g} X \mid W_1$. By Condition \eqref{efficiency:ind4}, it holds that $\sigma_{xx.z_1w_1c'} \geq \sigma_{xx.z_1z''_2w_1c'} = \sigma_{xx.z'_2w_1c}$, where $Z'_2 = Z_2 \setminus W_1$ and $Z''_2 = Z_2 \setminus (W_1 \cup Z_1)$.
	Hence,
	\begin{align*}
	\sigma_{xx.w_1c} - \sigma_{xx.z_1w_1c'} \leq \sigma_{xx.w_1c} - \sigma_{xx.z'_2w_1c}.
	\end{align*}
	Finally, by Condition \eqref{efficiency:ind2} it follows that 
	\begin{align*}
	\sigma_{xx.w_1c} - \sigma_{xx.z'_2w_1c} = \sigma_{xx.aw_2} - \sigma_{xx.z'_2aw_2} \leq \sigma_{xx.a'w_2} - \sigma_{xx.z_2w_2} \leq \sigma_{xx.w_2} - \sigma_{xx.z_2w_2},
	\end{align*}
	with $A'=A \cap Z_2$.
	This follows either by applying Lemma \ref{lemma:equalinstruments} with $B=A\setminus Z_2$ or by the fact that if $(A \setminus Z_2) \perp_{\g} X \mid Z_2 \cup W_2$, then $\sigma_{xx.z'_2w_2a} =  \sigma_{xx.z_2w_2}$ holds.
	\end{proof}

\begin{lemma}
	Consider a node $X$ and two disjoint node sets $Z$ and $W$ in an acyclic directed mixed graph $\g=(V,E)$ such that the distribution of $V$ is compatible with $\g$. Let $ W = A \cup B$ be a partition of $W$, i.e. $A \cap B = \emptyset$, and suppose that $A \perp_{\g} Z \mid B$. Then 
	\[
	\sigma_{xx.ab} - \sigma_{xx.zab} = \sigma_{xx.b} - \sigma_{xx.zb}.
	\]
	
	\label{lemma:equalinstruments}
\end{lemma}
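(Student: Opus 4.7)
The plan is to rewrite each side of the equation as a quadratic form in $\beta_{xz.w}$ and $\Sigma_{zz.w}$, and then show that both $\beta_{xz.w}$ and $\Sigma_{zz.w}$ take the same value whether $W=B$ or $W=A \cup B$. The m-separation statement $A \perp_{\g} Z \mid B$, combined with the fact that the distribution of $V$ is Markov with respect to $\g$, will be the only graphical input needed; after that the argument is purely algebraic and relies on the identities recorded in Lemmas \ref{lemma:conditional covariance matrix} and \ref{lemma:Wermuth}.

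First I would invoke Lemma \ref{lemma:conditional covariance matrix} twice, once with the conditioning set $B$ and once with $A \cup B$, to obtain
\[
\sigma_{xx.b} - \sigma_{xx.zb} = \beta_{xz.b}\,\Sigma_{zz.b}\,\beta_{xz.b}^{\top}, \qquad
\sigma_{xx.ab} - \sigma_{xx.zab} = \beta_{xz.ab}\,\Sigma_{zz.ab}\,\beta_{xz.ab}^{\top}.
\]
It therefore suffices to establish the two identities $\beta_{xz.b} = \beta_{xz.ab}$ and $\Sigma_{zz.b} = \Sigma_{zz.ab}$.

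For the first identity, I translate the m-separation $A \perp_{\g} Z \mid B$ into the conditional independence $A \ci Z \mid B$ using the Markov property, and then apply Lemma \ref{lemma:Wermuth} in the form ``if $Z \ci A \mid B$ then $\beta_{xz.b} = \beta_{xz.ab}$''. For the second identity, the same independence gives $\beta_{za.b}=0$, after which Lemma \ref{lemma:conditional covariance matrix} applied to the pair $(Z,Z)$ with conditioning $B$ augmented by $A$ yields $\Sigma_{zz.ab} = \Sigma_{zz.b} - \beta_{za.b}\Sigma_{aa.b}\beta_{za.b}^{\top} = \Sigma_{zz.b}$. Substituting both identities into the quadratic forms gives the claim.

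No step looks genuinely hard here; the only thing to be careful about is making sure that the independence $A \ci Z \mid B$ is the right hypothesis to trigger Lemma \ref{lemma:Wermuth} (checking the variable roles carefully), and noting that this independence, which is what the graphical assumption directly delivers, is simultaneously strong enough to kill the correction term in the covariance identity.
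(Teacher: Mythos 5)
Your proposal is correct and follows essentially the same route as the paper: both reduce each side to the quadratic form $\beta_{xz.\cdot}\,\Sigma_{zz.\cdot}\,\beta_{xz.\cdot}^{\top}$ via Lemma \ref{lemma:conditional covariance matrix} and then use the (Markov-translated) independence $Z \ci A \mid B$ with Lemma \ref{lemma:Wermuth} to equate $\beta_{xz.b}=\beta_{xz.ab}$ and $\Sigma_{zz.b}=\Sigma_{zz.ab}$. The only cosmetic difference is that you re-derive $\Sigma_{zz.ab}=\Sigma_{zz.b}$ from $\beta_{za.b}=0$ rather than citing the second clause of Lemma \ref{lemma:Wermuth} directly, which is equivalent.
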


\begin{proof}
	Using Lemma \ref{lemma:Wermuth} we can conclude that
	\[\sigma_{xx.ab} - \sigma_{xx.zab} = \beta_{xz.ab} \sigma_{zz.ab} \beta^\top_{xz.ab} = 
	\beta_{xz.b} \sigma_{zz.b} \beta^\top_{xz.b} = \sigma_{xx.b} - \sigma_{xx.zb}\]
	holds.
\end{proof}

\subsection{Proof of Proposition \ref{prop:moreW}}

\begin{proposition}
	Consider nodes $X,Y$ and $N$ in an acyclic directed mixed graph $\g$ such that $\de(X,\g) = \{X,Y\}$. Let $(Z,W)$ and $(Z,W\cup N)$ be distinct linearly valid conditional instrumental sets relative to $(X,Y)$ in $\g$. 
	If $(Z\cup N,W)$ is not a linearly valid conditional instrumental set relative to $(X,Y)$ in $\g$, then for all linear structural equation models compatible with $\g$ such that $\Sigma_{xz.w}\neq 0$,
	$a.var(\hat{\tau}_{yx}^{z.wn}) \leq a.var(\hat{\tau}_{yx}^{z.w}).$
\end{proposition}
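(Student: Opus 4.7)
The plan is to apply Theorem~\ref{theorem:CIVcomparison} with $(Z_1,W_1)=(Z,W)$ and $(Z_2,W_2)=(Z,W\cup N)$. Since $W_{1\setminus 2}=\emptyset$ and $Z_1\setminus(Z_2\cup W_{2\setminus 1})=\emptyset$, conditions \eqref{condition1}, \eqref{condition2} and \eqref{condition4} are vacuous. With $W_{2\setminus 1}=\{N\}$ and $N\notin Z$ we also have $W_{2\setminus 1}\cap Z_1=\emptyset$, so condition \eqref{condition3} reduces to \emph{either} $N\perp_{\g}Z\mid W$ \emph{or} $N\perp_{\g}X\mid W$. I would prove the first disjunct by contradiction.

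The first step is to observe that under the standing hypotheses the non-validity of $(Z\cup N,W)$ is equivalent to $N\not\perp_{\tilde{\g}}Y\mid W$: conditions (i) and (ii) of Theorem~\ref{theorem:graphical valid CIS} for $(Z\cup N,W)$ are inherited from the validity of $(Z,W\cup N)$ and $(Z,W)$, respectively, so only condition (iii) can fail, and given $Z\perp_{\tilde{\g}}Y\mid W$ this failure amounts precisely to $N\not\perp_{\tilde{\g}}Y\mid W$. Assuming $N\not\perp_{\g}Z\mid W$ for contradiction, I would then pick an open path $p$ from $N$ to $Y$ in $\tilde{\g}$ given $W$ and an open path $q$ from $N$ to some $A\in Z$ in $\g$ given $W$. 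The standard argument that $Y$ cannot appear as a middle node of $q$ (it would be a collider with $\de(Y,\g)=\{Y\}$ disjoint from $W$) shows that $q$ is also a path in $\tilde{\g}$. Let $I$ be the first node on $-q$, starting from $A$, that also lies on $p$, and set $q'=-q(A,I)\oplus p(I,Y)$; by the choice of $I$ this is a path from $A$ to $Y$ in $\tilde{\g}$ whose middle nodes other than $I$ inherit openness given $W$ from $p$ and $q$.

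It then remains to show that $q'$ is open given either $W$ or $W\cup N$, contradicting $Z\perp_{\tilde{\g}}Y\mid W$ or $Z\perp_{\tilde{\g}}Y\mid W\cup N$, respectively. This is routine whenever $I\in W$, whenever $I\notin W$ with $\de(I,\tilde{\g})\cap W\neq\emptyset$, or whenever at least one of the two edges at $I$ on $q'$ has a tail at $I$: inspecting the collider/non-collider status at $I$ using that $I$ is open on both $p$ and $q$ given $W$ yields openness of $q'$ given $W$. The main obstacle is the residual subcase in which $I\notin W$, $\de(I,\tilde{\g})\cap W=\emptyset$, $I$ is a non-collider on both $p$ and $q$, yet a collider on $q'$. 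Here I would argue by induction along the subpath $q(I,N)$ that it must be a directed path from $I$ to $N$: the non-collider status of $I$ on $q$ combined with its collider status on $q'$ forces the edge from $I$ toward $N$ on $q$ to leave $I$ as $I\to V_{j-1}$, and any node $V$ further along $q(I,N)$ that were a collider would need a descendant in $W$, which is impossible since $V\in\de(I)$ and $\de(I)\cap W=\emptyset$. Hence $N\in\de(I,\tilde{\g})$, so the collider $I$ on $q'$ has $N$ as a descendant in $W\cup N$, $q'$ is open given $W\cup N$, and we obtain the desired contradiction with the validity of $(Z,W\cup N)$.
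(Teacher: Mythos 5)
Your proposal is correct and follows essentially the same route as the paper's proof: reduce the problem to verifying condition \eqref{condition3} of Theorem \ref{theorem:CIVcomparison} (which here amounts to $N \perp_{\g} Z \mid W$), derive $N \not\perp_{\tilde{\g}} Y \mid W$ from the non-validity of $(Z\cup N, W)$, and then obtain a contradiction by splicing the two open paths at their first intersection point, with the key observation in the problematic collider case being that $N$ must be a descendant of the splice node, so the concatenated path is open given $W \cup N$. The only cosmetic difference is that you run the directed-subpath argument along the instrument-side path $q(I,N)$ whereas the paper runs it along the outcome-side path $p(I,N)$; both yield $N \in \de(I,\g)$ and the same contradiction.
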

\begin{proof}
	We will show our claim by showing that $(Z,W)$ and $(Z,W\cup N)$ being linearly valid conditional instrumental sets while $(Z\cup N,W)$ is not, implies that $N \perp_{\g} Z \mid W$. Using this we can apply Theorem \ref{theorem:CIVcomparison} with $Z_1=Z_2=Z,W_1=W$ and $W_2=W \cup  N$ to conclude the proof. 
	
	We first show that $N \not\perp_{\tilde{\g}} Y \mid W$, as we use this later. The fact that $(Z,W\cup N)$ is a linearly valid conditional instrumental set implies that $(Z\cup N,W)$ fulfills Condition (i) of Theorem \ref{theorem:graphical valid CIS}. Similarly, the fact that $(Z,W)$ is a linearly valid conditional instrumental set, implies that $(Z\cup N,W)$ also fulfills Condition (ii). Therefore $(Z\cup N,W)$ not being a linearly valid conditional instrumental set, requires a violation of Condition (iii), i.e., a path $p$ from $N$ to $Y$ in $\tilde{\g}$ that is open given $W$.

	We now show that $N \perp_{\g} Z \mid W$ by contradiction, so suppose that $N \not\perp_{\g} Z \mid W$, i.e., there exists a path $p'$ from some node $A \in Z$ to $N$ open given $W$ in $\g$. By Lemma \ref{lemma:always blocked}, $p'$ may not contain the edge $X \rightarrow Y$ and be open given $W$. Therefore $p'$ is also a path in $\tilde{\g}$. Further, $\de(M,\g) \setminus \de(M,\tilde{\g})=Y$ for any node $M$ and as a result $p'$ must also be open given $W$ in $\tilde{\g}$. We have already shown that there exists a path $p$ from $N$ to $Y$ open given $W$ in $\tilde{\g}$. Let $I$ be the first node on $p'$ where these two paths intersect and consider the concatenated path $q = p'(A,I) \oplus p(I,Y)$. We will now sequentially discuss the cases that $I=A,I=Y,I=N,I\neq N$ is a non-collider on $q$ and $I\neq N$ is a collider on $q$. We will show that in all these cases $q$ is open given $W$ or $W \cup  N$ and that the existence of the paths $p$ and $p'$ therefore contradicts our assumption that both $(Z,W)$ and $(Z,W\cup N)$ are linearly valid conditional instrumental sets.

If $I=A$ or $I=Y$, then $q$ is a subpath of $p'$ or $p$, respectively, and as a result is trivially open given $W$. Suppose now that $I=N$. Then if $N$ is a collider, $q$ is open given $W \cup N$. If not, $q$ is open given $W$. Suppose now that $I\neq N$ is a non-collider on $q$. Then it also has to be a non-collider on either $p$ or $p'$. Therefore, $I \notin W$ and $q$ is open given $W$. Suppose finally that $I\neq N$ is a collider on $q$. Then $q$ is open given $W$ if $\de(I, \g) \cap W \neq \emptyset$. If $\de(I, \g) \cap W = \emptyset$, on the other hand, then $p(I,N)$ must be of the form $I \rightarrow \dots \rightarrow N$. Otherwise some node in $\de(I,\g)$ would have to be a collider $C$ on $p(I,N)$ with $\de(C,\g) \subset \de(I,\g)$, which would contradict our assumption that $p$ is open given $W$. Thus, $N \in \de(I,\g)$ and $q$ is an open path given $W \cup  N$ in this case.

\end{proof}

\begin{lemma}\label{lemma:always blocked}
    Consider a node $X$ in an acyclic directed mixed graph $\g$ such that $\de(X,\g)=X$. Let $W$ be a node set in $\g$ such that $X \notin W$. Then any path in $\g$ that contains $X$ as a non-endpoint node is blocked by $W$.
\end{lemma}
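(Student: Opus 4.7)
The plan is to observe that the hypothesis $\de(X,\g)=X$ forces $X$ to have no outgoing directed edges: if $X \rightarrow V$ were an edge, then $V \in \de(X,\g)$ contradicting $\de(X,\g)=\{X\}$. Consequently, every edge adjacent to $X$ in $\g$ is of the form $\cdot \rightarrow X$ or $\cdot \leftrightarrow X$, that is, has an arrowhead at $X$.

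Now fix any path $p$ in $\g$ containing $X$ as a non-endpoint. Then there are exactly two edges on $p$ incident to $X$, and by the previous paragraph both of these edges have arrowheads at $X$. By the definition of a collider, this means $X$ is a collider on $p$.

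To conclude that $W$ blocks $p$, I will verify the collider condition: no descendant of $X$ is in $W$. This is immediate since $\de(X,\g) = \{X\}$ and $X \notin W$ by hypothesis. Hence $X$ is a collider on $p$ with no descendant in $W$, so $p$ is blocked by $W$.

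There is no real obstacle here: the proof is a two-line consequence of the definitions of descendant, collider, and m-separation, together with the observation that $\de(X,\g)=\{X\}$ rules out any outgoing edge from $X$. The only mild care needed is to remember that in an acyclic directed mixed graph, edges incident to $X$ come in two types (directed or bidirected), and to check both types end with an arrowhead at $X$ when $X$ has no proper descendants.
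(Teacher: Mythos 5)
Your proof is correct and follows exactly the paper's argument: $\de(X,\g)=\{X\}$ rules out edges out of $X$, so $X$ must be a collider on any path through it, and since $W\cap\de(X,\g)=\emptyset$ the path is blocked. No issues.
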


\begin{proof}
    Consider a path $p$ in $\g$ that contains $X$ as a non-endpoint node. Since $\de(X,\g)=X$ there is no edge of the form $X \rightarrow N$ in $\g$. Therefore, $X$ must be a collider on $p$. Since, $W \cap \de(X,\g) = \emptyset$ it follows that $p$ is blocked by $W$.
\end{proof}

\subsection{Proof of soundness of Algorithm \ref{algorithm}}
\label{sec:algo prelim}

\begin{proposition}\label{proposition:algo}
Consider nodes $X$ and $Y$ in an acyclic directed mixed graph $\g$ such that $\de(X,\g) = \{X,Y\}$. Let $(Z,W)$ be a linearly valid conditional instrumental set relative to $(X,Y)$ in $\g$. Then applying Algorithm \ref{algorithm} to $(\g,X,Y,Z,W)$ minimizes $a.var(\hat{\tau}_{yx}^{z'.w'})$ at each step of either phase. Further, the first phase of Algorithm \ref{algorithm} is order independent.
\end{proposition}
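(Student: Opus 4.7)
The plan is to establish the claim by induction on the iterations of the for-loop in Algorithm~\ref{algorithm}, maintaining the invariant that after each iteration the current tuple $(Z', W')$ is a valid conditional instrumental set relative to $(X,Y)$ in $\g$ and that $a.var(\hat{\tau}_{yx}^{z'.w'})$ has not increased during that iteration. The base case is immediate because the input $(Z, W)$ is assumed to be a valid CIS.

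For the inductive step, I would fix an iteration with current tuple $(Z', W')$ and candidate node $N$, and consider the three possibilities. If the algorithm takes the line~5 branch and adds $N$ to $Z'$, then the validity check ensures $(Z' \cup N, W')$ is a valid CIS; applying Corollary~\ref{corollary:more instruments} with $Z_1 = Z'$, $Z_2 = Z' \cup N$ and common conditioning set $W'$ then yields $a.var(\hat{\tau}_{yx}^{(z' \cup n).w'}) \leq a.var(\hat{\tau}_{yx}^{z'.w'})$. If the algorithm takes the line~7 branch and adds $N$ to $W'$, then $(Z', W' \cup N)$ is valid by the second check, and I would invoke Proposition~\ref{prop:moreW} to conclude $a.var(\hat{\tau}_{yx}^{z'.(w' \cup n)}) \leq a.var(\hat{\tau}_{yx}^{z'.w'})$. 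If neither branch fires, the tuple is unchanged and the invariant is preserved trivially. The greedy flavor of the statement, namely that the algorithm favors adding $N$ to $Z'$ over adding it to $W'$ whenever both placements are legal, is justified by Corollary~\ref{corollary:always instrument}.

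The main obstacle is that Proposition~\ref{prop:moreW} requires $(Z' \cup N, W')$ to be \emph{not} a valid CIS, whereas the algorithm's line~7 branch is reached as soon as the conjunction on line~4 fails, which can happen either because $(Z' \cup N, W')$ is invalid or because the m-separation check $N \perp_{\g} X \mid W \cup Z$ holds while $(Z' \cup N, W')$ is valid. In the first sub-case Proposition~\ref{prop:moreW} applies verbatim. In the second, I plan to argue directly from the asymptotic variance formula in Theorem~\ref{theorem:varequ}: the m-separation check holding implies that the conditional instrumental strength $\sigma_{xx.w'} - \sigma_{xx.z'w'}$ is unaffected when $N$ is appended to either side, so the change in $a.var$ is governed entirely by the residual variance $\sigma_{\tilde{y}\tilde{y}.w'}$, which is weakly non-increasing under additional conditioning. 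Translating the check against the original $W \cup Z$ into the corresponding m-separation against the current $W' \cup Z'$ is where I expect the bulk of the technical work to lie, and it will involve a path-tracing argument leveraging the validity of all intermediate tuples produced by earlier iterations together with the invariance of the forbidden node set under the algorithm's additions.
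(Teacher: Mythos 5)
Your overall strategy (a per-step case analysis invoking Corollary~\ref{corollary:more instruments} for the add-to-$Z'$ branch, Proposition~\ref{prop:moreW} for the add-to-$W'$ branch, and Corollary~\ref{corollary:always instrument} to justify the preference for $Z'$) is the same as the paper's. But there is a genuine gap in how you treat the case where neither branch fires. The proposition claims the algorithm \emph{minimizes} the asymptotic variance at each step, not merely that the variance never increases; so when the algorithm discards $N$ even though $(Z'\cup N,W')$ is a valid tuple (because the check $N\perp_{\g}X\mid W\cup Z$ holds), you must show that adding $N$ would not have done strictly better. "The tuple is unchanged and the invariant is preserved trivially" does not do this. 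The paper closes this by applying Theorem~\ref{theorem:CIVcomparison} with $(Z_1,W_1)=(Z'\cup N,W')$ and $(Z_2,W_2)=(Z',W')$: conditions \eqref{condition1}--\eqref{condition3} are void and condition \eqref{condition4} is exactly the m-separation that triggered the discard, yielding $a.var(\hat{\tau}_{yx}^{z'.w'})\leq a.var(\hat{\tau}_{yx}^{(z'\cup n).w'})$; combined with Corollary~\ref{corollary:always instrument} this also dominates $(Z',W'\cup N)$ when that tuple is valid.

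Your planned workaround for the sub-case "$(Z'\cup N,W')$ valid but $N\perp_{\g}X\mid W\cup Z$, and the ElseIf fires" also has two problems. First, that sub-case cannot occur: validity of $(Z'\cup N,W')$ forces $N\perp_{\tilde{\g}}Y\mid W'$ by condition (iii) of Theorem~\ref{theorem:graphical valid CIS}, so the second conjunct of the ElseIf fails and $N$ is discarded (this is how the paper disposes of it, so Proposition~\ref{prop:moreW} is only ever invoked when its hypothesis that $(Z'\cup N,W')$ is invalid actually holds). Second, your claim that the m-separation check makes the conditional instrumental strength "unaffected when $N$ is appended to either side" is wrong as stated: $N\perp_{\g}X\mid Z'\cup W'$ gives $\sigma_{xx.z'w'n}=\sigma_{xx.z'w'}$ but says nothing about $\sigma_{xx.w'n}$ versus $\sigma_{xx.w'}$, so moving $N$ into the conditioning set can strictly shrink $\sigma_{xx.w'}-\sigma_{xx.z'w'}$ and strictly increase the variance (this is precisely the "harmful conditioning" phenomenon of Example~\ref{ex:harmful}, and is the reason the algorithm discards such an $N$). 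The anticipated "path-tracing argument" translating the check from $W\cup Z$ to $W'\cup Z'$ is therefore aimed at the wrong target; what is actually needed is the missing comparison between discarding and adding, via Theorem~\ref{theorem:CIVcomparison}.
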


\begin{proof}
Consider a step of the first phase of Algorithm \ref{algorithm} given a linearly valid tuple $(Z',W')$ and node $N$. Suppose $(Z' \cup \{N\},W'\setminus \{N\})$ is not valid. If $(Z',W'\setminus \{N\})$ is valid, then it follows by Proposition \ref{prop:moreW} that $(Z',W')$ is more efficient and therefore continuing with $(Z',W')$ greedily minimizes the asymptotic variance. Suppose $(Z' \cup \{N\},W'\setminus \{N\})$ is valid. Then $(Z' \cup \{N\},W'\setminus \{N\})$ is more efficient than $(Z',W')$ by Corollary \ref{corollary:always instrument} and than $(Z',W'\setminus \{N\})$, if it is valid, by Corollary \ref{corollary:more instruments}. Therefore adding $N$ to $Z'$ greedily minimizes the variance. Finally, $(Z' \cup \{N\},W'\setminus \{N\})$ is valid if and only if $N \perp_{\tilde{G}} Y \mid W'$. By the weak union property of d-separation it follows that the first phase of Algorithm \ref{algorithm} is order independent and always returns $(Z \cup S, W \setminus S)$ where $S \subseteq W$ is the largest set such that $S \perp_{\tilde{G}} Y \mid W$ \citep[cf. Lemma D.2.][]{henckel2019graphical}.

Consider a step of Algorithm \ref{algorithm} given a linearly valid tuple $(Z',W')$ and node $N$. We will now show that in all possible cases Algorithm \ref{algorithm} uses $N$ in a manner that greedily minimizes the asymptotic variance. Suppose that $(Z'\cup N,W')$ is valid and $N \not\perp_{\g} X \mid W' \cup Z'$. Then by Corollary \ref{corollary:more instruments}, $(Z'\cup N,W')$ is more efficient than $(Z',W')$. Further, if $(Z',W'\cup N)$ is valid, it is less efficient than $(Z'\cup N,W')$ by Corollary \ref{corollary:always instrument}. Therefore, adding $N$ to $Z'$ greedily minimizes the asymptotic variance. Suppose now that $(Z'\cup N,W')$ is not valid, $(Z',W'\cup N)$ is valid and $N \not\perp_{\tilde{\g}} Y \mid W'$. In this case, $(Z',W'\cup N)$ is more efficient than $(Z',W')$ by Proposition \ref{prop:moreW}. Therefore, adding $N$ to $W'$ greedily minimizes the asymptotic variance. Suppose now that $(Z'\cup N,W')$ is valid, $N \perp_{\g} X \mid W' \cup Z'$, $(Z',W'\cup N)$ is valid and $N \not\perp_{\tilde{\g}} Y \mid W'$. Since $(Z\cup N,W')$ being valid requires that $N \perp_{\tilde{\g}} Y \mid Z' \cup W'$ this case cannot occur by the contraction property of d-separation.

It remains to show that in all other cases, discarding $N$ greedily minimizes the asymptotic variance. Suppose that neither $(Z'\cup N,W')$ nor $(Z',W'\cup N)$ are valid. Then it is trivial that discarding $N$ greedily minimizes the asymptotic variance. Suppose that $(Z'\cup N,W')$ is not valid, $(Z',W'\cup N)$ is valid and $N \perp_{\tilde{\g}} Y \mid W'$. The fact that $(Z'\cup N,W')$ is not valid and $(Z',W')$ is, implies that $N \not\perp_{\tilde{\g}} Y \mid W'$ and therefore this case cannot occur. Suppose that $(Z'\cup N,W')$ is valid, $N \perp X \mid W' \cup Z'$ and $(Z',W'\cup N)$ is not valid. We can apply Theorem \ref{theorem:CIVcomparison} with $(Z_1,W_1)=(Z'\cup N,W')$ and $(Z_2,W_2)=(Z',W')$, to conclude that $(Z',W')$ is as efficient as $(Z'\cup N,W')$. Therefore, discarding $N$ greedily minimizes the asymptotic variance. Suppose that $(Z'\cup N,W')$ is valid, $N \perp X \mid W' \cup Z', (Z',W'\cup N)$ is valid and $N \perp_{\tilde{\g}} Y \mid W'$. By Corollary \ref{corollary:always instrument}, $(Z' \cup N,W')$ is more efficient than $(Z',W'\cup N)$. We can apply Theorem \ref{theorem:CIVcomparison} with $(Z_1,W_1)=(Z'\cup N,W')$ and $(Z_2,W_2)=(Z',W')$, to conclude that $(Z',W')$ is as efficient as $(Z'\cup N,W')$. Therefore, discarding $N$ greedily minimizes the asymptotic variance and this concludes all possible cases.
\end{proof}

\subsection{Proof of Theorem \ref{theorem:optimalCIS}}

\begin{theorem}
	Consider nodes $X$ and $Y$ in an acyclic directed mixed graph $\g$ such that $\de(X,\g) = \{X,Y\}$. Let $W^o= \mathrm{dis}^+_{X}(Y,\g) \setminus \{X,Y\}$ and $Z^o=\mathrm{dis}^+_Y(X,\g) \setminus (\{X,Y\} \cup W^o)$. Then the following two statements hold: (i) if $Z^o \neq \emptyset$  then $(Z^o, W^o)$ is a linearly valid conditional instrumental set relative to $(X,Y)$ in $\g$; (ii) if $Z^o \cap \{\pa(X,\g) \cup \mathrm{sib}(X,\g)\} \neq \emptyset$ then $(Z^o, W^o)$ is also graphically optimal relative to $(X,Y)$ in $\g$.
\end{theorem}

\begin{proof}
	We first show that if $Z^o\neq \emptyset$ then $(Z^o,W^o)$ is a linearly valid conditional instrumental set. By Lemma 
	\ref{lemma:Wo max info}, $Z^o \perp_{\tilde{\g}} Y \mid W^o$. Further, by construction of $Z^o$ and our assumption that $Z^o=\emptyset$ there exists a path $p$ from any node in $Z^o$ to $X$ consisting entirely of colliders in $Z^o\cup W^o$. Hence, $Z^o \not\perp_{\g} X \mid W^o$.
	
	We now consider the second statement of the theorem. The claim that $(Z^o,W^o)$ is graphically optimal is equivalent to saying that for all linearly valid conditional instrumental sets $(Z,W)$ relative to $(X,Y)$ in $\g$ such that there exists a linear structural equation model $\mathcal{M}_1$ compatible with $\g$ for which $a.var(\hat{\tau}_{yx}^{z.w},\mathcal{M}_1) < a.var(\hat{\tau}_{yx}^{z^o.w^o},\mathcal{M}_1)$, there also exists a linear structural equation model $\mathcal{M}_2$ compatible with $\g$ for which $a.var(\hat{\tau}_{yx}^{z.w},\mathcal{M}_2) > a.var(\hat{\tau}_{yx}^{z^o.w^o},\mathcal{M}_2)$, where $a.var(\hat{\tau}_{yx}^{z.w},\mathcal{M})$ denotes the asymptotic variance of the estimator $\hat{\tau}_{yx}^{z.w}$ in the linear structural equation model $\mathcal{M}$. We will now show this characterization of graphical optimality holds for $(Z^o,W^o)$.

 First we will show that for any linearly valid conditional instrumental set $(Z,W)$ such that $W^o \setminus W =\emptyset$, we can use Theorem \ref{theorem:CIVcomparison} with $(Z_1,W_1)=(Z,W)$ and $(Z_2,W_2)=(Z^o,W^o)$ to conclude that 
	\[
	a.var(\hat{\tau}_{yx}^{z^o.w^o}) \leq a.var(\hat{\tau}_{yx}^{z.w}).
	\]
	By Lemma \ref{lemma:Wo max info}, $W \setminus W^o \perp_{\tilde{\g}} Y \mid W^o$ and as a result Condition \eqref{condition1} of Theorem \ref{theorem:CIVcomparison} holds. By Lemma \ref{lemma:WoZo max info}, $W \setminus (W^o \cup Z^o) \perp_{\g} X \mid Z^o \cup W^o$ and therefore Condition \eqref{condition2} of Theorem \ref{theorem:CIVcomparison} holds. By Lemma \ref{lemma:WoZo max info}, $W \setminus (W^o \cup Z^o) \perp_{\g} X \mid Z^o \cup W^o$ and $Z \setminus Z^o \perp_{\g} X \mid Z^o \cup W^o$, where we use that $Z \cap W^o=\emptyset$ since $Z \cap W=\emptyset$. By the weak union property it follows that $Z \setminus Z^o \perp_{\g} X \mid Z^o \cup W$ and therefore Condition \eqref{condition4} of Theorem \ref{theorem:CIVcomparison} holds. Finally, Condition \eqref{condition3} is void by the fact that $W^o\setminus W = \emptyset$. We can therefore suppose that $W^o\setminus W \neq \emptyset$ for the remainder of the proof.
	
 Let $(Z,W)$ be a linearly valid conditional instrumental set in $\g$, such that $W^o \setminus W \neq \emptyset$ and consider the ratio of asymptotic variances
 \[
\frac{a.var(\hat{\tau}_{yx}^{z^o.w^o})}{a.var(\hat{\tau}_{yx}^{z.w})}
=\frac{\sigma_{\tilde{y}\tilde{y}.w^o}(\sigma_{xx.w} -\sigma_{xx.wz})}{\sigma_{\tilde{y}\tilde{y}.w}(\sigma_{xx.w^o} -\sigma_{xx.w^oz^o})}.
 \]
We will now construct a Gaussian linear structural equation model such that this ratio is smaller than 1. To keep our notation readable we make the dependence of our conditional variances on the underlying Gaussian linear structural equation model explicit only when necessary.

Let $F'$ be the set of all edges that are the first on any proper path from $W^o$ to $X$ that is open given $Z^o$. 
Similarly, let $F''$ be the set of all edges that are first on any proper path from $W^o$ to $X$ that is open given the empty set. 
We then define the following class of Gaussian linear structural equation models. Let $(\mathcal{A},\Omega)$ be a Gaussian linear structural equation model compatible with $\g$ such that$\Omega$ is strictly diagonally dominant. Let $(\mathcal{A}_{F}(\epsilon),\Omega_{F}(\epsilon))$ be the Gaussian linear structural equation model, such that the edge coefficients and error covariances corresponding to edges in $F=F'\cup F''$ are replaced with the value $\epsilon > 0$. 
Recall that by Lemma \ref{lemma:convergence} the limit for any conditional covariance as a function of $\epsilon$, with $\epsilon \rightarrow 0$ exists and is the corresponding conditional covariance in the model $(\mathcal{A}_{F}(0),\Omega_{F}(0))$. Clearly, $(\mathcal{A}_{F}(0),\Omega_{F}(0))$ is also compatible with the truncated acyclic directed mixed graph $\mathcal{G}'$ with the edges in $F$ removed and we can therefore invoke additional m-separation statements as implied by $\mathcal{G}'$.
Choose $(\mathcal{A},\Omega)$ such that the model is faithful to $\g$, the truncated model with $\tilde{Y}$ instead of $Y$ to $\tilde{\g}$ and the same is true for the model $(\mathcal{A}_{F}(0),\Omega_{F}(0))$ with respect to $\g'$ and $\tilde{\g}'$. Such a model exists by the fact that a faithfulness violation corresponds to a non-trivial polynomial in the non-zero parameters of $(\mathcal{A},\Omega)$ evaluating to zero \citep{spirtes2000causation}. As a long as we impose a finite number of faithfulness constraints the resulting set of parameters for which the resulting model violates one of our constraints therefore has Lebesgue measure $0$.

 We first make two observations that hold for any Gaussian linear structural equation model compatible with $\mathcal{G}$. Let $A = W \setminus W^o$ and $C = W^o \setminus W$. By Lemma \ref{lemma:Wo max info}, $\sigma_{\tilde{y}\tilde{y}.w^o}=\sigma_{\tilde{y}\tilde{y}.aw^o}=\sigma_{\tilde{y}\tilde{y}.wc}$. Therefore,
\begin{equation}
0 < \frac{\sigma_{\tilde{y}\tilde{y}.w^o}}{\sigma_{\tilde{y}\tilde{y}.w}}=\frac{\sigma_{\tilde{y}\tilde{y}.wc}}{\sigma_{\tilde{y}\tilde{y}.w}}
=\frac{\sigma_{\tilde{y}\tilde{y}.w}-\sigma_{\tilde{y}c.w} \Sigma^{-1}_{cc.w} \sigma^\top_{\tilde{y}c.w}}{\sigma_{\tilde{y}\tilde{y}.w}} = 1 - q, \quad q \geq 0.
\label{eq:residual variance}
\end{equation}
Second, 
\begin{equation}
\frac{\sigma_{xx.w} -\sigma_{xx.wz}}{\sigma_{xx.w^o} -\sigma_{xx.w^oz^o}}
\leq \frac{\sigma_{xx.w} -\sigma_{xx.w'z'z^o}}{\sigma_{xx.w^o} -\sigma_{xx.w^oz^o}}
\leq \frac{\sigma_{xx} -\sigma_{xx.w'z'z^o}}{\sigma_{xx.w^o} -\sigma_{xx.w^oz^o}},
\label{eq:instrumental strength}
\end{equation}
with $W'=W \setminus Z^o$ and $Z'=Z \setminus Z^o$.

By assumption on $Z^o$, there exists a path from some node in $Z^o$ to $X$ consisting entirely of colliders in $Z^o$. Clearly no edge in this path is adjacent to any node in $W^o$ and therefore no edge is in $F$. Thus, $Z^o \not\perp_{\mathcal{G}'} X \mid W^o$. By construction $(\mathcal{A}_{F}(0),\Omega_{F}(0))$ is faithful to $\mathcal{G}'$. Therefore, 
\begin{align*}
\lim_{\epsilon \rightarrow 0} \sigma_{xx.w^o}\{\mathcal{A}_{F}(\epsilon),\Omega_{F}(\epsilon)\} &- \sigma_{xx.w^oz^o}\{\mathcal{A}_{F}(\epsilon),\Omega_{F}(\epsilon)\} \\  &=\sigma_{xx.w^o}\{\mathcal{A}_{F}(0),\Omega_{F}(0)\}- \sigma_{xx.w^oz^o}\{\mathcal{A}_{F}(0),\Omega_{F}(0)\}> 0.
\end{align*}
This allows us to conclude that 
\begin{align*}
&\lim_{\epsilon \rightarrow 0} \frac{a.var[\hat{\tau}_{yx}^{z^o.w^o}\{\mathcal{A}_{F}(\epsilon),\Omega_{F}(0)\}]}{a.var[\hat{\tau}_{yx}^{z.w}(\{\mathcal{A}_{F}(\epsilon),\Omega_{F}(\epsilon)\}]}
=  \\
& \quad \quad \quad \frac{\sigma_{\tilde{y}\tilde{y}.w^o}\{\mathcal{A}_{F}(0),\Omega_{F}(0)\}}{  \sigma_{\tilde{y}\tilde{y}.w}\{\mathcal{A}_{F}(0),\Omega_{F}(0)\}}
\frac{[\sigma_{xx.w}\{\mathcal{A}_{F}(0),\Omega_{F}(0)\} -\sigma_{xx.wz}\{\mathcal{A}_{F}(0),\Omega_{F}(0)\}]}{[\sigma_{xx.w^o}\{\mathcal{A}_{F}(0),\Omega_{F}(0)\} -\sigma_{xx.w^oz^o}\{\mathcal{A}_{F}(0),\Omega_{F}(0)\}]}.
\end{align*}
By a similar argument $\sigma_{\tilde{y}c.w}\{\mathcal{A}_{F}(0),\Omega_{F}(0)\}  \neq 0$ and therefore by Equation \eqref{eq:residual variance} 
\begin{align*}
\frac{\sigma_{\tilde{y}\tilde{y}.w^o}\{\mathcal{A}_{F}(0),\Omega_{F}(0)\}}{  \sigma_{\tilde{y}\tilde{y}.w}\{\mathcal{A}_{F}(0),\Omega_{F}(0)\}} &= 1-\frac{ \sigma_{\tilde{y}c.w}\{\mathcal{A}_{F}(0),\Omega_{F}(0)\} \Sigma^{-1}_{cc.w}\{\mathcal{A}_{F}(0),\Omega_{F}(0)\} \sigma^\top_{\tilde{y}c.w}\{\mathcal{A}_{F}(0),\Omega_{F}(0)\}}{\sigma_{\tilde{y}\tilde{y}.w}\{\mathcal{A}_{F}(0),\Omega_{F}(0)\}} \\ 
&= 1-q^0 < 1.
\end{align*}
 Combined with Equation \eqref{eq:instrumental strength} we have
\begin{align}
\lim_{\epsilon \rightarrow 0} \frac{a.var(\hat{\tau}_{yx}^{z^o.w^o})}{a.var(\hat{\tau}_{yx}^{z.w})}
&= (1-q^0)
\frac{\sigma_{xx.w}\{\mathcal{A}_{F}(0),\Omega_{F}(0)\} -\sigma_{xx.wz}\{\mathcal{A}_{F}(0),\Omega_{F}(0)\}}{\sigma_{xx.w^o}\{\mathcal{A}_{F}(0),\Omega_{F}(0)\} -\sigma_{xx.w^oz^o}\{\mathcal{A}_{F}(0),\Omega_{F}(0)\}}
\nonumber \\
&\leq (1-q^0)
\frac{\sigma_{xx}\{\mathcal{A}_{F}(0),\Omega_{F}(0)\} -\sigma_{xx.w'z'z^o}\{\mathcal{A}_{F}(0),\Omega_{F}(0)\}}{\sigma_{xx.w^o}\{\mathcal{A}_{F}(0),\Omega_{F}(0)\} -\sigma_{xx.w^oz^o}\{\mathcal{A}_{F}(0),\Omega_{F}(0)\}} \label{eq: fraction}.
\end{align}

We now show that by construction of $F$, $W^o \perp_{\mathcal{G}'} X$ and $S \perp_{\mathcal{G}'} X \mid Z^o$ for any $S \subseteq V \setminus (Z^o \cup \{X,Y\})$ in order to further simplify Equation \eqref{eq: fraction}.

Consider first the claim that $W^o \perp_{\mathcal{G}'} X$. Let $p$ be a path in $\g'$ from $W^o$ to $X$. Clearly, $p$ is also a path in $\g$ that does not contain any edges in $F$. By the fact that $F''$ contains all first edges on paths from $W^o$ to $X$ that are open given the empty set it follows that $p$ is blocked in $\g$. By Lemma \ref{lemma: removal}, it follows that $p$ is also blocked in $\g'$ and our claim follows.  

Consider now the claim that $S \perp_{\mathcal{G}'} X \mid Z^o$ for any $S \subseteq V \setminus (Z^o \cup \{X,Y\})$. 
To do so we show that any proper path from some node $N \in S$ to $X$ that is open given $Z^o$ in $\g$ contains an edge in $F$. This suffices by the arguments given in the previous paragraph.

Let $p$ be proper path from some node $N \in S$ to $X$ that is open given $Z^o$ in $\g$. By Lemma \ref{lemma:always blocked}, $p$ may not contain $Y$ and be open given $Z^o$, since $\de(Y,\g)=Y$ and $Y \notin Z^o$. Further, any other node adjacent to $X$ in $\g$ is either in $Z^o$ or $W^o$. If the latter is the case we are done, so suppose that $p$ contains a subsegment $p(Z,X)$ chosen to be of maximal possible length, such that every node in $p(Z,X)$, except for $X$ is in $Z^o$. Let $M$ be the node before $Z$ on $p$ and consider $p(M,X)$. By assumption, $p(M,X)$ may not contain a non-collider and thus must be of the form 
\[ 
M \rightarrow Z \leftrightarrow \dots \leftrightarrow X \text{ or } M \leftrightarrow Z \leftrightarrow \dots \leftrightarrow X.
\]
In either case, to not contradict our assumption that $p(Z,X)$ is of maximal length, $M \in W^o$ must hold. Thus the edge $M \rightarrow Z$, respectively $M \leftrightarrow Z$, is in $F''$ and our claim follows.

Since $W^o \perp_{\mathcal{G}'} X$, 
\[\lim_{\epsilon \rightarrow 0}\sigma_{xx.w^o}\{\mathcal{A}_{F}(\epsilon),\Omega_{F}(\epsilon)\} = \sigma_{xx.w^o}\{\mathcal{A}_{F}(0),\Omega_{F}(0)\}=\sigma_{xx}\{\mathcal{A}_{F}(0),\Omega_{F}(0)\}.\] 

Since $S \perp_{\mathcal{G}'} X \mid Z^o$ for any $S \subseteq V \setminus (Z^o \cup \{X,Y\})$,
\[\lim_{\epsilon \rightarrow 0}\sigma_{xx.w'z'z^o}\{\mathcal{A}_{F}(\epsilon),\Omega_{F}(\epsilon)\}=\sigma_{xx.w'z'z^o}\{\mathcal{A}_{F}(0),\Omega_{F}(0)\}=\sigma_{xx.z^o}\{\mathcal{A}_{F}(0),\Omega_{F}(0)\}\]
and
\[\lim_{\epsilon \rightarrow 0}\sigma_{xx.w^oz^o}\{\mathcal{A}_{F}(\epsilon),\Omega_{F}(\epsilon)\}= \sigma_{xx.w^oz^o}\{\mathcal{A}_{F}(0),\Omega_{F}(0)\}=\sigma_{xx.z^o}\{\mathcal{A}_{F}(0),\Omega_{F}(0)\}.\]

Using these limit statements, we can conclude that
\begin{align*}
\lim_{\epsilon \rightarrow 0} \frac{a.var[\hat{\tau}_{yx}^{z^o.w^o}\{\mathcal{A}_{F}(\epsilon),\Omega_{F}(\epsilon)\}]}{a.var[\hat{\tau}_{yx}^{z.w}\{\mathcal{A}_{F}(\epsilon),\Omega_{F}(\epsilon)\}]}
&\leq (1-q^0) < 1.
\end{align*}

Therefore, there exists an $\epsilon > 0$, such that the Gaussian linear structural equation model $(\mathcal{A}_{F}(\epsilon),\Omega_{F}(\epsilon))$ is compatible with $\g$ and the asymptotic variance provided by $(Z,W)$ is larger than the one provided by $(Z^o,W^o)$.
\end{proof}

\begin{lemma}
	Consider nodes $X$ and $Y$ in an acyclic directed mixed graph $\g$ with node set $V$ such that $\de(X,\g) =\{X,Y\}$. Let $W^o = \mathrm{dis}^+_{X}(Y,\g) \setminus \{X,Y\}$, then for any $S \subseteq V \setminus (W^o \cup \{X,Y\})$,
	\[
	S \perp_{\tilde{\g}} Y \mid W^o.
	\]
	\label{lemma:Wo max info}
\end{lemma}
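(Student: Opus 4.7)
I would prove the contrapositive: assume $p$ is a path from $S \in V \setminus \{X,Y\}$ to $Y$ in $\tilde{\g}$ that is open given $W^o$, and show $S \in W^o$. Write $D = \mathrm{dis}_X(Y,\g)$ and $P = \pa(D,\g)$, so $W^o = (D \cup P) \setminus \{X,Y\}$; the standing assumption $\de(X,\g)=\{X,Y\}$ also gives $\de(Y,\g)=\{Y\}$. The plan is to walk backwards along $p$ from $Y$ and argue inductively that every interior node encountered lies in $D \cup P \cup \{X\}$; the same analysis applied at the endpoint will force $S \in W^o$.

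The base step uses that the edge of $p$ incident to $Y$ cannot point out of $Y$ (since $\de(Y,\g)=\{Y\}$) and cannot be $X \to Y$ (removed in $\tilde{\g}$), so the neighbor $V_{k-1}$ of $Y$ on $p$ is a parent of $Y$ different from $X$ (hence in $P\setminus\{X\}\subseteq W^o$, and a non-collider) or a sibling of $Y$ (hence in $D$, and in $W^o$ unless it equals $X$). The inductive step says: if the walk has reached a collider $V_j \in D\setminus\{X\} \subseteq W^o$ on $p$, then the edge from $V_{j-1}$ into $V_j$ is either $V_{j-1}\to V_j$, forcing $V_{j-1}\in\pa(D,\g)\subseteq D\cup P$, or $V_{j-1}\leftrightarrow V_j$, in which case appending this edge to the $\leftrightarrow$-chain witnessing $V_j\in D$ (valid because $V_j\neq X$) gives $V_{j-1}\in D$. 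Either way $V_{j-1}\in D\cup P\cup\{X\}$; the value $Y$ is excluded because $Y$ occurs only once on $p$. If $V_{j-1}\neq X$ then $V_{j-1}\in W^o$, and either $p$ is blocked at $V_{j-1}$ (if $V_{j-1}$ is a non-collider) or the induction continues (if $V_{j-1}$ is a collider).

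The main obstacle, and the reason $W^o$ excludes $X$, is the repeated special case $V_{j-1}=X$. I would handle it by showing that $X$ cannot occur as a non-collider on $p$: one of its adjacent edges on $p$ would then have a tail at $X$, forcing a child of $X$ onto $p$, but $\de(X,\g)=\{X,Y\}$ and both $X, Y$ already occur on $p$, so this is impossible. Hence $X$ must be a collider on $p$, and openness at $X$ then requires $\de(X,\g)\cap W^o\neq\emptyset$; since $\de(X,\g)=\{X,Y\}$ is disjoint from $W^o$, the path $p$ is blocked at $X$, contradicting openness. As $p$ is finite the walk terminates at $V_0 = S$, and the step analysis applied there (treating $V_0$ as an endpoint rather than a collider) forces $S\in (D\cup P)\setminus\{X,Y\}=W^o$, completing the proof.
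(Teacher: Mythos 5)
Your proof is correct and follows essentially the same route as the paper's: both walk the path back from $Y$ through the bidirected district $\mathrm{dis}_X(Y,\g)$, using that every neighbour of $Y$ in $\tilde{\g}$ lies in $W^o\cup\{X\}$, that an all-collider segment ending at $Y$ forces each preceding node into the district or its parent set (hence into $W^o$ or $\{X\}$), and that $X$ blocks any path containing it since $\de(X,\tilde{\g})=\{X\}$. The paper packages the induction as a maximal-suffix-in-$W^o$ argument, but the substance is identical.
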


\begin{proof}
	Let $p$ be a proper path from some node $A \in S$ to $Y$ in $\tilde{\g}$.  We now show that $p$ is blocked by $W^o$.
 
    By Lemma \ref{lemma:always blocked}, $p$ is blocked if it contains $X$ so assume this is not the case. Every node adjacent to $Y$ in $\tilde{\g}$ is either in $W^o$ or $X$. Therefore $p$ contains at least one node $W \in W^o$, such that $p(W,Y)$ consists, except for $Y$, of nodes in $W^o$. Choose $W \in W^o$, such that $p(W,Y)$ is of maximal possible length. Let $N \notin W^o$ be the node adjacent to $W$ on $p$. We will now show that $p(N,Y)$ contains at least one non-collider by contradiction. Suppose that all nodes on $p(N,Y)$ are colliders, i.e., $p(N,Y)$ is of the form
	\[
	N \leftrightarrow W \leftrightarrow \dots \leftrightarrow Y  \text{ or } 
	N \rightarrow W \leftrightarrow \dots \leftrightarrow Y.
	\]
  Since $p$ does not contain $X$, $N \neq X$. By the existence of $p(N,Y)$ it follows that $N \in W^o$ which contradicts our assumption that $p(W,Y)$ is maximal. Therefore, $p(W,Y)$ contains a non-collider. As all nodes on $p(W,Y)$ are in $W^o$, it follows that $p(W,Y)$ and therefore $p$ are blocked by $W^o$. Since this is true for any proper path it follows that $S \perp_{\tilde{\g}} Y \mid W^o$. 
\end{proof}

\begin{lemma}
	Consider nodes $X$ and $Y$ in an acyclic directed mixed graph $\g$ with node set $V$ such that $\de(X,\g)=\{X,Y\}$. Let $W^o = \mathrm{dis}^+_{X}(Y,\g)\setminus \{X,Y\}$ and $Z^o=\mathrm{dis}^+_Y(X,\g) \setminus (\{X,Y\} \cup W^o)$, then for any $S \subseteq V \setminus (W^o \cup Z^o \cup \{X,Y\})$
	\[
	S \perp_{\g} X \mid W^o \cup Z^o.
	\]
	\label{lemma:WoZo max info}
\end{lemma}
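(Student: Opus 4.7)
The plan is to mirror the proof of Lemma~\ref{lemma:Wo max info}, exploiting the fact that $W^o \cup Z^o$ plays a role around $X$ that is symmetric to the role $W^o$ alone plays around $Y$ in that earlier proof. Write $U := W^o \cup Z^o$. I would fix an arbitrary proper path $p$ from some $A \in S$ to $X$ and show that $p$ is blocked by $U$. First I would dispose of the case $Y \in p$: then $Y$ is a non-endpoint of $p$, and $\de(Y,\g) = \{Y\}$ together with $Y \notin U$ means Lemma~\ref{lemma:always blocked} blocks $p$. From here on, assume $Y \notin p$.

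The central structural observation is that every neighbour of $X$ in $\g$ is either $Y$ or lies in $U$: parents and siblings of $X$ belong to $\mathrm{dis}^+_Y(X,\g)$, since $X \in \mathrm{dis}_Y(X,\g)$ trivially, while children of $X$ belong to $\de(X,\g) = \{X,Y\}$. Hence the node on $p$ adjacent to $X$ lies in $U$. I would then pick $V^* \in U \cap p$ that maximises the length of the subpath $p(V^*,X)$ whose interior lies wholly in $U$, and let $M$ be the node adjacent to $V^*$ on $p$ on the side of $A$; by maximality and $A \notin U$, we have $M \notin U$. If $V^*$ is a non-collider on $p$, we are done, since $p$ is then blocked at $V^* \in U$.

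Otherwise $V^*$ is a collider on $p$, and the remaining task is to produce a non-collider somewhere in the interior of $p(V^*,X)$, which would automatically lie in $U$ and block $p$. Suppose for contradiction every interior node of $p(V^*,X)$ is also a collider on $p$. Then every edge on $p(V^*,X)$ has arrowheads at all non-endpoint marks, so $p(V^*,X)$ takes the form $V^* \leftrightarrow V_1 \leftrightarrow \dots \leftrightarrow V_k$ followed by an edge from $V_k$ to $X$ with an arrowhead at $V_k$. The option $V_k \leftarrow X$ is impossible, because $V_k \in U$ yet $\de(X,\g) = \{X,Y\}$, so the final edge is $V_k \leftrightarrow X$. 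Since $Y \notin p$, this bidirected chain witnesses $V^* \in \mathrm{dis}_Y(X,\g)$.

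Finally I would inspect the edge joining $V^*$ and $M$, whose mark at $V^*$ is an arrowhead because $V^*$ is a collider. If the edge is bidirected, the chain extends to $M \leftrightarrow V^* \leftrightarrow \dots \leftrightarrow X$, placing $M$ in $\mathrm{dis}_Y(X,\g)$; if instead the edge is $V^* \leftarrow M$, then $M \in \pa(\mathrm{dis}_Y(X,\g),\g) \subseteq \mathrm{dis}^+_Y(X,\g)$. Since $M \neq X$ and $M \neq Y$ (because $Y \notin p$), either possibility forces $M \in U$, contradicting $M \notin U$. The main obstacle is exactly this edge analysis at $V^*$: recognising that the ``$+$'' in $\mathrm{dis}^+_Y(X,\g)$ is precisely what absorbs the $V^* \leftarrow M$ case, so that whichever mark appears at $V^*$ on the $M$-side, $M$ is trapped inside $U$.
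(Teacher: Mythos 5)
Your argument is correct and is exactly the intended one: the paper proves this lemma by saying it ``follows by the same arguments as given for Lemma \ref{lemma:Wo max info}'', and your proof is precisely that mirrored argument, carried out around $X$ with $U = W^o \cup Z^o$ in place of $W^o$. The key steps — disposing of $Y$ via Lemma \ref{lemma:always blocked}, noting every neighbour of $X$ other than $Y$ lies in $U$, choosing the maximal all-$U$ terminal segment, and using the parent clause of $\mathrm{dis}^+$ to trap $M$ in $U$ in the all-collider case — all match the template of the paper's proof of Lemma \ref{lemma:Wo max info}.
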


\begin{proof}
Follows by the same arguments as given for Lemma \ref{lemma:Wo max info}.
\end{proof}

\begin{lemma}\label{lemma: removal}
Consider a node set $W$ and a path $p$ in an acyclic directed mixed graph $\g$, such that $p$ is blocked by $W$ in $\g$. Let $\g'$ be a graph obtained by removing edges from $\g'$ such that $p$ is also a path in $\g'$. Then $p$ is blocked by $W$ in $\g'$. 
\end{lemma}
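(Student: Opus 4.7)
The plan is to split on the two possible reasons $p$ is blocked by $W$ in $\g$ (per the definition of blocking in Section~\ref{sec:prelim:graph}) and show each reason survives the removal of edges that leaves $p$ intact.

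First I would observe that since $p$ is a path in both $\g$ and $\g'$, the sequence of edges making up $p$ is the same in the two graphs. Consequently, for every non-endpoint node $V$ on $p$, the two edges of $p$ adjacent to $V$ are the same in $\g$ and $\g'$, so $V$ is a collider on $p$ in $\g$ if and only if $V$ is a collider on $p$ in $\g'$ (and similarly for being a non-collider). This reduces the argument to checking the two blocking conditions.

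In the first case, $p$ contains a non-collider $V\in W$ in $\g$. By the observation above, $V$ is still a non-collider on $p$ in $\g'$, and $V\in W$ of course still holds, so $p$ is blocked in $\g'$. In the second case, $p$ contains a collider $C$ with $\de(C,\g)\cap W=\emptyset$. By the same observation, $C$ is still a collider on $p$ in $\g'$. Since $\g'$ is obtained by removing edges from $\g$, any causal path from $C$ to a node $D$ in $\g'$ is also a causal path in $\g$, giving $\de(C,\g')\subseteq\de(C,\g)$; hence $\de(C,\g')\cap W=\emptyset$, so $p$ is again blocked in $\g'$.

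There is no real obstacle here: the only subtlety is being explicit that (i) the collider/non-collider status of nodes on a path is intrinsic to the edges of the path and therefore invariant under removing edges not on $p$, and (ii) removing edges can only shrink descendant sets.
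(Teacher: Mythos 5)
Your proof is correct and follows essentially the same route as the paper's: case split on the two blocking conditions, with the observations that collider status on $p$ is preserved and that $\de(C,\g')\subseteq\de(C,\g)$. The paper's version is just terser; your explicit remark that the edges of $p$ (and hence collider/non-collider status) are unchanged is a fine elaboration of what the paper leaves implicit.
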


\begin{proof}
Since $p$ is blocked by $W$ in $\g$, it must (i) contain a non-collider that is in $W$ or (ii) a collider $C$ such that $W \cap \de(C,\g) =\emptyset$. If (i) holds in $\g$ then it clearly also holds in $\g'$. The same is true for (ii), by the fact that $\de(C,\g') \subseteq \de(C,\g)$.
\end{proof}

\section{Additional examples and results}

\subsection{Additional example for Section \ref{sec:prep results}}

\begin{example}[Linearly valid conditional instrumental sets]
	\label{ex:CIS}
	To illustrate Theorem \ref{theorem:graphical valid CIS} and Lemma \ref{prop:non forb} we consider the graph $\g$ from Fig. \ref{fig:CIS example}. We are interested in classifying all linearly valid conditional instrumental sets relative to $(V_4,V_6)$ in $\g$. By Condition (i) of Theorem \ref{theorem:graphical valid CIS}, no linearly valid conditional instrumental set $(Z,W)$ may contain any nodes in $\mathrm{forb}(V_4,V_6,\g)=\{V_4,V_5,V_6,V_7\}$. Therefore, we only need to consider sets $Z,W \subseteq \{V_1,V_2,V_3\}$. 
	For Condition (ii) to hold, there must exist a path from $Z$ to $X$ that is open given $W$ in $\g$. For any $V_i$ with $i\in\{1,2,3\}$ there exist three paths to $X$ in $\g$: the path ending with the edge $V_1 \rightarrow V_4$, the one ending with the edge $V_5 \leftarrow V_4$ and the one ending with the edge $V_5 \leftrightarrow V_6$. The latter two always include $V_6$ as a collider and as $\de(V_6,\g)=\{V_6,V_7\}$ this implies that they are both always blocked by any $W \subset \{V_1,V_2,V_3\}$. The first path on the other hand does not contain a collider and is therefore only closed if there exists a node $V_j \in W$ with $j< i$. Therefore, Condition (ii) holds if and only if there exists an $i \in \{1,2,3\}$, such that $V_i \in Z$ and for all $j < i, V_j \notin W$. Finally, for Condition (iii) to hold there must not be a path from $Z$ to $Y$ that is open given $W$ in the graph $\tilde{\g}$, which is the graph $\g$ but with the edge $V_4 \rightarrow V_5$ removed. For any $V_i$ with $i\in\{1,2,3\}$ there exist two paths to $Y$ in $\tilde{\g}$: the path ending with the edge $V_3 \rightarrow V_6$ and the one ending with the edge $V_4 \leftrightarrow V_6$. The latter always includes $V_4$ as a collider and as $\de(V_4,\tilde{\g})=\emptyset$ it is always blocked by any $W \in \{V_1,V_2,V_3\}$. The first path on the other hand does not contain a collider and is therefore only closed if there exists a node $V_j \in W$ with $j > i$. Therefore, Condition (iii) holds if and only if for all $i \in \{1,2,3\}$, such that $V_i \in Z$ there exists a $j > i$ such that $V_j \in W$. 
	It follows that there are five possible linearly valid tuples relative to $(V_4,V_6)$ in $\g$: $(V_1,V_2)$, $(V_1,V_3)$, $(\{V_1,V_2\},V_3)$, $(V_1,\{V_2,V_3\})$ and $(V_2,V_3)$. 
\end{example}

\subsection{Additional results for Section \ref{subsec:asymptotic variance}}
\label{sec:asy formula app}

The asymptotic variance formula for the two-stage least squares estimator from Theorem \ref{theorem:varequ} elegantly mirrors the asymptotic variance formula for the ordinary least squares estimator, which is
\begin{equation*}
a.var(\hat{\beta}_{yx.w})=\frac{\sigma_{yy.xw}}{\sigma_{xx.w}}.
\end{equation*}
We can use this to derive the following simple proof for the known result that, in cases where it is possible to do either, it is more efficient to estimate a total effect with the ordinary least squares estimator than with the two-stage least squares estimator. Interestingly, the proof shows that this gain in efficiency depends primarily on the instrumental strength $\sigma_{xx.w}- \sigma_{xx.zw}$.

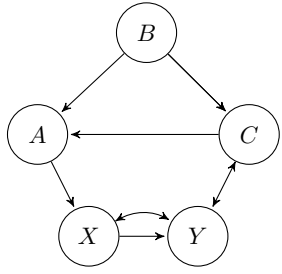
\begin{figure}[t]
	\centering
		\begin{tikzpicture}[>=stealth',shorten >=1pt,auto,node distance=0.7cm,scale=.9, transform shape,align=center,minimum size=3em]
		\node[state] (x) at (0,0) {$X$};
		\node[state] (y) [right =of x] {$Y$}; 
		\node[state] (w2) at ($(y)+(.75,1.5)$)   {$C$};  
		\node[state] (w3) at ($(w2)+(-1.5,1.5)$)   {$B$};  
		\node[state] (w4) at ($(x)+(-.75,1.5)$)   {$A$};  
		\path[->]   (x) edge    (y);
		\path[<->]   (x) edge  [bend left]  (y);
		\path[->]   (w4) edge     (x);
		\path[<->]   (w2) edge     (y);
		\path[->]   (w3) edge     (w4);
		\path[->]   (w3) edge     (w2);
		\path[->]   (w3) edge     (w2);
		\path[->]   (w2) edge     (w4);
		\end{tikzpicture}
	\caption{Acyclic directed mixed graph for Example \ref{ex:all opt}.}
	\label{fig:all opt}
\end{figure}

\begin{lemma} 
	\label{lemma:OLS better}
\cite[e.g. Chapter 5.2.3][]{wooldridge2010econometric}
	Let $X,Y$ be nodes in an ADMG $\g=(V,E)$, such that $V$ is generated from a linear structural equation model compatible with $\g$. Let $(Z,W)$ be a conditional instrumental set relative to $(X,Y)$ in $\g$ and let $W$ be an adjustment set relative to $(X,Y)$ in $\g$. Then 
	\[
	a.var(\hat{\tau}_{yx}^{z.w}) \geq 	a.var(\hat{\beta}_{yx.w}).
	\]
\end{lemma}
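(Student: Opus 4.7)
The plan is to compare the two asymptotic variance formulas term-by-term, using the fact that a valid adjustment set lets us equate the numerators, while the denominators differ only by the subtraction of $\sigma_{xx.zw}$.

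First, I would establish that the numerators agree, that is, $\sigma_{\tilde{y}\tilde{y}.w} = \sigma_{yy.xw}$. Since $W$ is a valid adjustment set relative to $(X,Y)$ in $\g$, we have $\beta_{yx.w} = \tau_{yx}$. Writing the population regression decomposition
\[
Y = \tau_{yx} X + \beta_{yw.x} W + \eta,
\]
with $\eta$ uncorrelated with $(X,W)$ and $\mathrm{var}(\eta) = \sigma_{yy.xw}$, substitution yields $\tilde{Y} = Y - \tau_{yx} X = \beta_{yw.x} W + \eta$. Because $\eta$ is uncorrelated with $W$, the population least squares coefficient satisfies $\beta_{\tilde{y}w} = \beta_{yw.x}$, so the residual of $\tilde{Y}$ on $W$ coincides with $\eta$, and hence $\sigma_{\tilde{y}\tilde{y}.w} = \mathrm{var}(\eta) = \sigma_{yy.xw}$.

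Second, I would compare the denominators. Adding the vector $Z$ to the conditioning set of $X$ can only weakly decrease the residual variance, so $\sigma_{xx.zw} \leq \sigma_{xx.w}$, and hence $\sigma_{xx.w} - \sigma_{xx.zw} \leq \sigma_{xx.w}$. Combining this with the equality of the numerators, and applying Theorem~\ref{theorem:varequ} for the 2SLS asymptotic variance together with the standard OLS formula, gives
\[
a.var(\hat{\tau}_{yx}^{z.w}) \;=\; \frac{\sigma_{\tilde{y}\tilde{y}.w}}{\sigma_{xx.w} - \sigma_{xx.zw}} \;\geq\; \frac{\sigma_{yy.xw}}{\sigma_{xx.w}} \;=\; a.var(\hat{\beta}_{yx.w}),
\]
which is the desired inequality.

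There is no serious obstacle here; the only subtlety is that Theorem~\ref{theorem:varequ} requires $\Sigma_{xz.w} \neq 0$ and $\de(X,\g) = \{X,Y\}$. The former is implicit, since otherwise the 2SLS estimator lacks the standard asymptotic theory and the statement is vacuous (or one reads the right-hand side as $+\infty$). The latter can be arranged without loss of generality by applying the latent projection to marginalize out the remaining descendants of $X$, as argued after Proposition~\ref{prop:descX}, since $W$ already being a valid adjustment set ensures consistency of OLS is preserved under such a projection.
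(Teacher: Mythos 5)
Your proposal is correct and follows essentially the same route as the paper's proof: establish $\sigma_{\tilde{y}\tilde{y}.w}=\sigma_{yy.xw}$ from $\beta_{yx.w}=\tau_{yx}$ (your regression-decomposition argument is just an unpacked version of the paper's appeal to Lemma~\ref{lemma:Cochran}, giving $\beta_{\tilde{y}w}=\beta_{yw.x}$), then bound the denominator via the non-negativity of $\sigma_{xx.zw}$. Your closing remark on the hypotheses of Theorem~\ref{theorem:varequ} is a reasonable extra precaution but does not change the argument.
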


\begin{proof}
	By the assumption that $W$ is a valid adjustment set it follows that $\beta_{yx.w}=\tau_{yx}$. Therefore, $\beta_{\tilde{y}w}=\beta_{yw}-\tau_{yx}\beta_{xw}=\beta_{yw.x}$ by Lemma \ref{lemma:Cochran}. As a results $\sigma_{\tilde{y}\tilde{y}.w} = \sigma_{yy.xw}$. Further, by the non-negativity of conditional variances $\sigma_{xx.w}\geq \sigma_{xx.w}- \sigma_{xx.zw}$. Combined this implies that
	\[
	\frac{a.var(\hat{\beta}_{yx.w})}{a.var(\hat{\tau}_{yx}^{z.w})} = 
	\frac{\sigma_{xx.w}-\sigma_{xx.zw}}{\sigma_{xx.w}} = 1 - \frac{\sigma_{xx.zw}}{\sigma_{xx.w}} \leq 1.
	\]
	
\end{proof}

\subsection{Additional examples for Section \ref{subsec:efficiency}}
\label{sec: ex+}

  \begin{table}[t]
  \begin{center}
		\def~{\hphantom{0}}
		\caption{List of all linearly valid conditional instrumental sets in the graph from Fig. \ref{subfig:weird forb} of the Main}
		\begin{tabular}{ll}
			\def~{\hphantom{0}}
			$Z$ & $W$ \\
			$A$ & $\{B,D\},\{C,D\},\{B,C,D\}$\\
			$B$ & $C,\{A,C\},\{C,D\},\{A,C,D\}$\\
			$D$ & $B,C,\{A,B\},\{A,C\},\{B,C\},\{A,B,C\}$\\
			$\{A,B\}$ & $C,\{C,D\}$\\
			$\{A,D\}$ & $B,C,\{B,C\}$\\
			$\{B,D\}$ & $C,\{A,C\}$ \\
			$\{A,B,D\}$ & $C$\\
		\end{tabular}
		\label{table:valid cis}     
  \end{center}
	\end{table}

\begin{example}[Neutral conditioning]\label{ex:neutral}
    Consider the graph $\g$ from Fig. \ref{subfig:simple} of the Main. We are interested in tuples of the form $(D,W)$ and $(D,W')$, with $W \subseteq \{A,C\}$ and $W' = W \cup B$. By Example \ref{ex:harmful} of the Main, any such tuple is a linearly valid conditional instrumental set relative to to $(X,Y)$ in $\g$. Further, as $B \perp_{\g} D \mid W$ we can apply Theorem \ref{2SLSavar} with $W_1=W,W_2=W'$ and $Z_1=Z_2=D$ (Condition \eqref{condition3} (i) holds, Conditions \eqref{condition1}, \eqref{condition2} and \eqref{condition4} are void). As $B \perp_{\tilde{\g}} Y \mid W$ and $B \perp_{\g} D \mid W$ we can also apply Theorem \ref{2SLSavar} with the roles of $W$ and $W'$ reversed, i.e., $W_1=W',W_2=W$ and $Z_1=Z_2=D$ (Conditions \eqref{condition1} and \eqref{condition2} (i) hold, Conditions \eqref{condition3} and \eqref{condition4} are void). We can therefore conclude that adding $B$ to any $W \subseteq \{A,C\}$ has no effect on the asymptotic variance. 
    
    The reason conditioning on $B$ has no effect, is that $B \ci \tilde{Y} \mid W$ and $B \ci D \mid W$. Therefore, $\sigma_{\tilde{y}\tilde{y}.wb}=\sigma_{\tilde{y}\tilde{y}.w}$ and $\sigma_{xx.w}-\sigma_{xx.wd}=\sigma_{xx.wb}-\sigma_{xx.wbd}$ (see Lemma \ref{lemma:equalinstruments}). Conditioning on $B$ has no effect on either the residual variance or the conditional instrumental strength.
\end{example}

\begin{example}[Beneficial conditioning 2]
\label{ex: beneficial 2}
        Consider the graph $\g$ from Fig. \ref{subfig:weird} in the Main and the two tuples $(\{B,D\},C)$ and $(\{B,D\},\{A,C\})$. By Example \ref{ex:valid CIS}, both tuples are linearly valid relative to $(X,Y)$ in $\g$. Further, as $A \perp_{\g} X \mid  C$ we can apply Theorem \ref{2SLSavar} with $W_1=C,W_2=\{A,C\}$ and $Z_1=Z_2=\{B,D\}$ (Condition \eqref{condition3} (ii) holds, Conditions \eqref{condition1},\eqref{condition2} and \eqref{condition4} are void). We can therefore conclude that conditioning on $A$ can only improve the asymptotic variance. 
    
    The reason conditioning on $A$ is beneficial, is that $A \ci \tilde{Y} \mid C$ and $A \ci X \mid C$ but $A \notci X \mid \{B,C,D\}$. Therefore, $\sigma_{\tilde{y}\tilde{y}.ac} = \sigma_{\tilde{y}\tilde{y}.c}, \sigma_{xx.ac}=\sigma_{xx.c}$ but $\sigma_{xx.abcd} \leq \sigma_{xx.bcd}$. Interestingly, this also implies that $(\{B,D\},\{A,C\})$ has the exact same residual variance and conditional instrumental strength as the linearly valid tuple $(\{A,B,D\},C)$: $\sigma_{\tilde{y}\tilde{y}.c}$ and $\sigma_{xx.c} - \sigma_{xx.abcd}$, respectively. Conditioning on the covariate $A$ is therefore beneficial because adding it to the conditioning set has the same effect as adding it to the instrumental set and the latter is beneficial by Corollary \ref{corollary:more instruments}.
\end{example}

\begin{example}(Optimal tuple in Fig. \ref{subfig:simple} of the Main) \label{ex:optimal 2}
	Consider the graph $\g$ in Fig. \ref{subfig:simple} of the Main. We first characterize all linearly valid conditional instrumental sets relative to $(X,Y)$ in $\g$. As $\f{\g}=\{X,Y\}$, we only need to consider disjoint node sets $Z$ and $W$ that are subsets of $\{A,B,C,D\}$. Further, $Z \perp_{\tilde{\g}} Y \mid W$ if and only if $C \notin Z$ and therefore $Z \subseteq \{A,B,D\}$. Finally, $Z \perp_{\g} X \mid  W$ if and only if $\{B,D\} \cap Z = \emptyset$ and $D \in W$.  Therefore, any linearly valid tuple with respect to $(X,Y)$ in $\g$ has to be of the form $(Z,W)$, with $Z \subseteq \{A,B,D\}$ non-empty, $W \subseteq \{A,B,C,D\} \setminus Z$ if $Z \neq \{A\}$, and $W \subseteq \{B,C\}$ if $Z = \{A\}$. There are $34$ tuples of this form. 
	
	Let $(Z,W)$ be any of these $34$ linearly valid tuples. Suppose that $Z\cup W \neq \{A,B,C,D\}$. By the same argument as in Example \ref{ex:weird} we can use Proposition \ref{prop:moreW} and Corollary \ref{corollary:more instruments} to obtain a more efficient tuple $(Z',W')$, such that $Z' \cup W' = \{A,B,C,D\}$. We can also apply Corollary \ref{corollary:always instrument} with $S = \{A,B,D\} \setminus Z'$ and conclude that the linearly valid set $(\{A,B,D\},C)$ is at least as efficient as $(Z',W')$. Since we began from an arbitrary linearly valid tuple, this shows that $(\{A,B,D\},C)$ provides the smallest attainable asymptotic variance among all linearly valid tuples. However, as
	$A \perp_{\g} X \mid \{B,C,D\}$
	we can apply Theorem \ref{theorem:CIVcomparison} with $W_1 = W_2 = \{C\}$, $Z_1=\{A,B,D\}$ and $Z_2=\{B,D\}$ to conclude that $(\{B,D\},C)$ is at least as efficient as $(\{A,B,D\},C)$. Therefore, the smaller tuple $(\{B,D\},C)$ is also asymptotically optimal. 
\end{example}

\subsection{Additional example for Section \ref{subsec:CIVselection}}
\label{sec:weird optimal cis}

\begin{figure}[t]
	\centering
	\subfloat[\label{subfig:Zo empty}]{
		\begin{tikzpicture}[>=stealth',shorten >=1pt,auto,node distance=0.7cm,scale=.9, transform shape,align=center,minimum size=3em]
		\node[state] (x) at (0,0) {$X$};
		\node[state] (y) [right =of x] {$Y$};
		\node[state] (w1) [left =of x] {$A$};  
		\node[state] (w2) [above =of y] {$B$};  
		
		\path[->]   (x) edge    (y);
		\path[<->]   (x) edge  [bend left]  (y);
		\path[->]   (w1) edge     (x);
		\path[<->]   (w2) edge    (y);
		\path[->]   (w1) edge    (w2);;
		\end{tikzpicture}
	}
	\subfloat[\label{subfig:Zo not optimal}]{
		\begin{tikzpicture}[>=stealth',shorten >=1pt,auto,node distance=0.7cm,scale=.9, transform shape,align=center,minimum size=3em]
		\node[state] (x) at (0,0) {$X$};
		\node[state] (y) [right =of x] {$Y$};
		\node[state] (w1) [left =of x] {$C$};  
		\node[state] (w2) [above =of y] {$B$};  
		\node[state] (z) [above =of w1] {$A$}; 
		
		\path[->]   (x) edge    (y);
		\path[<->]   (x) edge  [bend left]  (y);
		\path[<->]   (w1) edge     (x);
		\path[<->]   (w2) edge    (y);
		\path[->]   (w1) edge    (w2);
		\path[->]   (z) edge    (w1);
		
		\end{tikzpicture}
	}
	\caption{\protect \subref{subfig:Zo empty} and \protect\subref{subfig:Zo not optimal} Acyclic directed mixed graphs for Example \ref{ex:optimal cis}.}
	\label{fig:Zo odd}
\end{figure}

    \begin{example}[All graphically optimal tuples]
        Consider the graph $\g$ from Fig. \ref{fig:all opt}. There are two linearly valid conditional instrumental sets with respect to $(X,Y)$ in $\g$: $(A,\{B,C\})$ and $(B,\emptyset)$. We consider two linear structural equation models compatible with $\g$: for model $\mathcal{M}_1$ let all error variances and edge coefficients be $1$, where we model the bi-directed edges with a latent variable, i.e, $X \leftarrow L \rightarrow Y$. For model $\mathcal{M}_2$ do the same, except for setting the edge coefficient for the edge $B \rightarrow A$ to $0.05$. For both models we computed the asymptotic variance corresponding to the two linearly valid tuples. The results are given in Table \ref{table:all opt} and they show that in $\mathcal{M}_1$ the most efficient tuple is $(A,\emptyset)$ while in $\mathcal{M}_2$ the most efficient tuple is $(A,\{B,C\})$. Therefore, all linearly valid tuples are graphically optimal.
        \label{ex:all opt}
    \end{example}

    \begin{table}[h]
    \begin{center}
\def~{\hphantom{0}}
		\caption{Asymptotic variances for Example \ref{ex:all opt}}
\begin{tabular}{rrr}
 & $(A,\{B,C\})$ & $(B,\emptyset)$ \\ 
$\mathcal{M}_1$ & 2.50 & 0.75 \\ 
  $\mathcal{M}_2$ & 2.50 & 2.72 \\ 
\end{tabular}
\label{table:all opt}
    \end{center}
\end{table}

    \begin{example}[Graphical conditions in Theorem \ref{theorem:optimalCIS}]
	Consider the acyclic directed mixed graphs in Fig. \ref{subfig:Zo empty} and \ref{subfig:Zo not optimal}, denoted here as $\g$ and $\g'$, respectively. Again, we are interested in estimating $\tau_{yx}$. In $\g$, $Z^o=\emptyset$, even though $\{A,\emptyset\}$ is a linearly valid conditional instrumental set relative to $(X,Y)$ in $\g$. It is therefore an example, where $(Z^o,W^o)$ is not a linearly valid tuple, even though one exists. In $\g'$, $Z^o=A$ and $W^o=\{B,C\}$, which is a linearly valid tuple relative to $(X,Y)$ in $\g'$. However, $Z^o \cap \{\pa(X,\g') \cup \mathrm{sib}(X,\g')\}=\emptyset$. As a result in many linear structural equation model compatible with $\g'$ the instrumental strength of $(A,\{B,C\}), \sigma_{xx.bc} - \sigma_{xx.abc}$, will be small.
    \end{example}

\section{Supplementary material for simulations}
\label{sec:simulations appendix}

We now provide some additional information regarding the simulations in our main paper. Reproducible code is made available at \texttt{https://github.com/henckell/efficientCIS}.

For completeness we provide violin plots of the ratios for the two-stage least squares estimator with all alternative linearly valid conditional instrumental sets in our simulation study from Section \ref{sec:simulations}. In addition we also also add violin plots for the ratio of the theoretical asymptotic standard deviation of $(Z^o,W^o)$ to the alternative linearly valid tuple per Theorem \ref{theorem:varequ}. These plots represent the case for $n=\infty$. The plots for $\g_1$ are given in Fig. \ref{fig:violin_long_1a}. The plots for $\g_2$ are given in Fig. \ref{fig:violin_long_2a_a} and \ref{fig:violin_long_2a_b}. 

In addition we also provide violin plots in Fig. \ref{fig:liml jive main} of the ratios of the limited information maximum likelihood and the jackknife instrumental variables estimator with $(Z^o,W^o)$ to the one with a representative subset of the alternative linearly valid conditional instrumental sets. 

 \begin{figure}	
  \captionsetup{width=5cm}
  		\centering
 		\includegraphics[height=18cm, width=14cm]{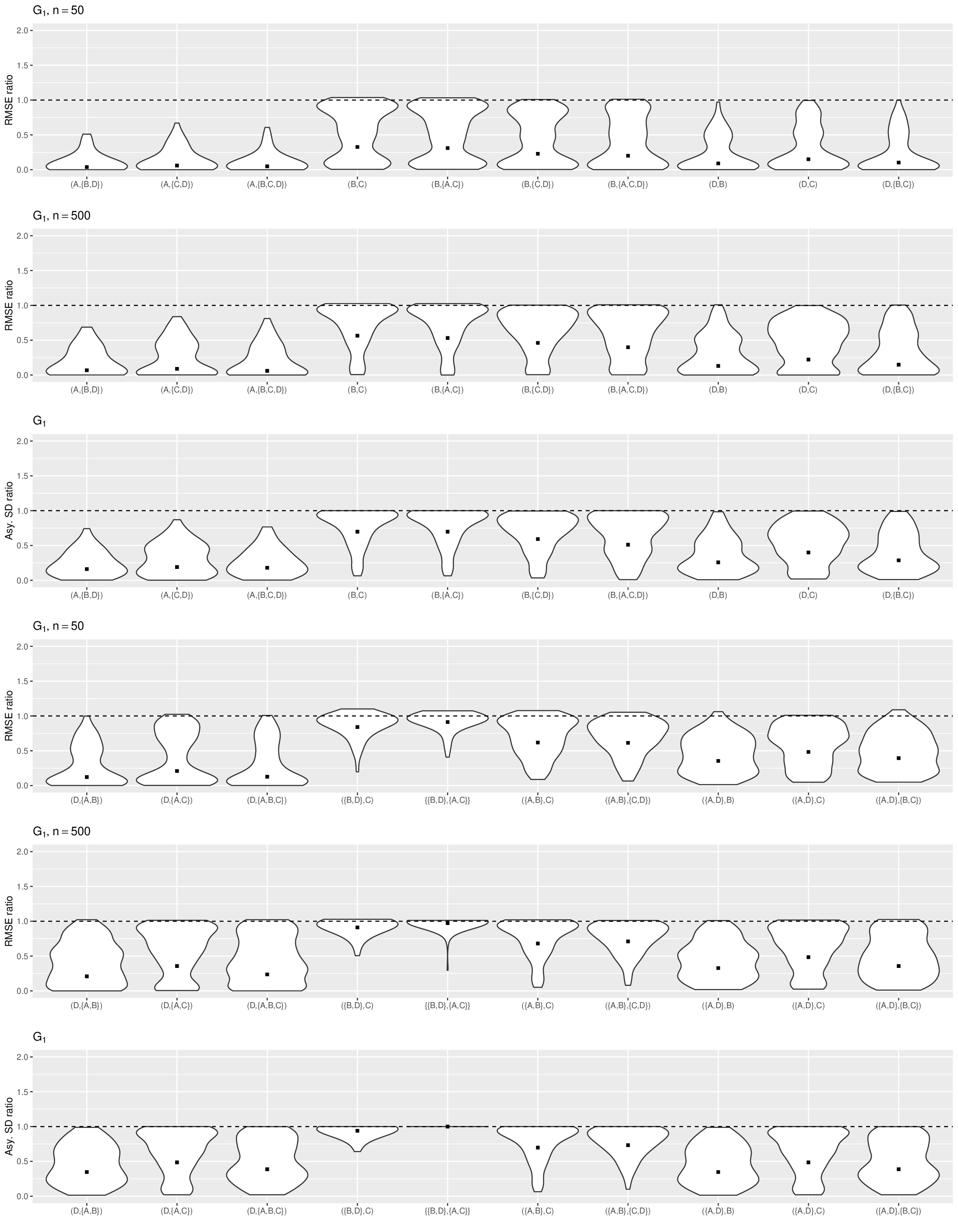}
 	\caption{Violin plot of the ratios of the root mean squared error, respectively the asymptotic standard deviation, for the two-stage least squares with $(Z^o,W^o)$ to the one for the tuple $(Z,W)$ given on the X-axis. The black dots mark the geometric mean of the ratios.}	
 	\label{fig:violin_long_1a}
 \end{figure}
 
  \begin{figure}[p]	
  \captionsetup{width=5cm}
  		\centering
 		\includegraphics[height=18cm, width=14cm]{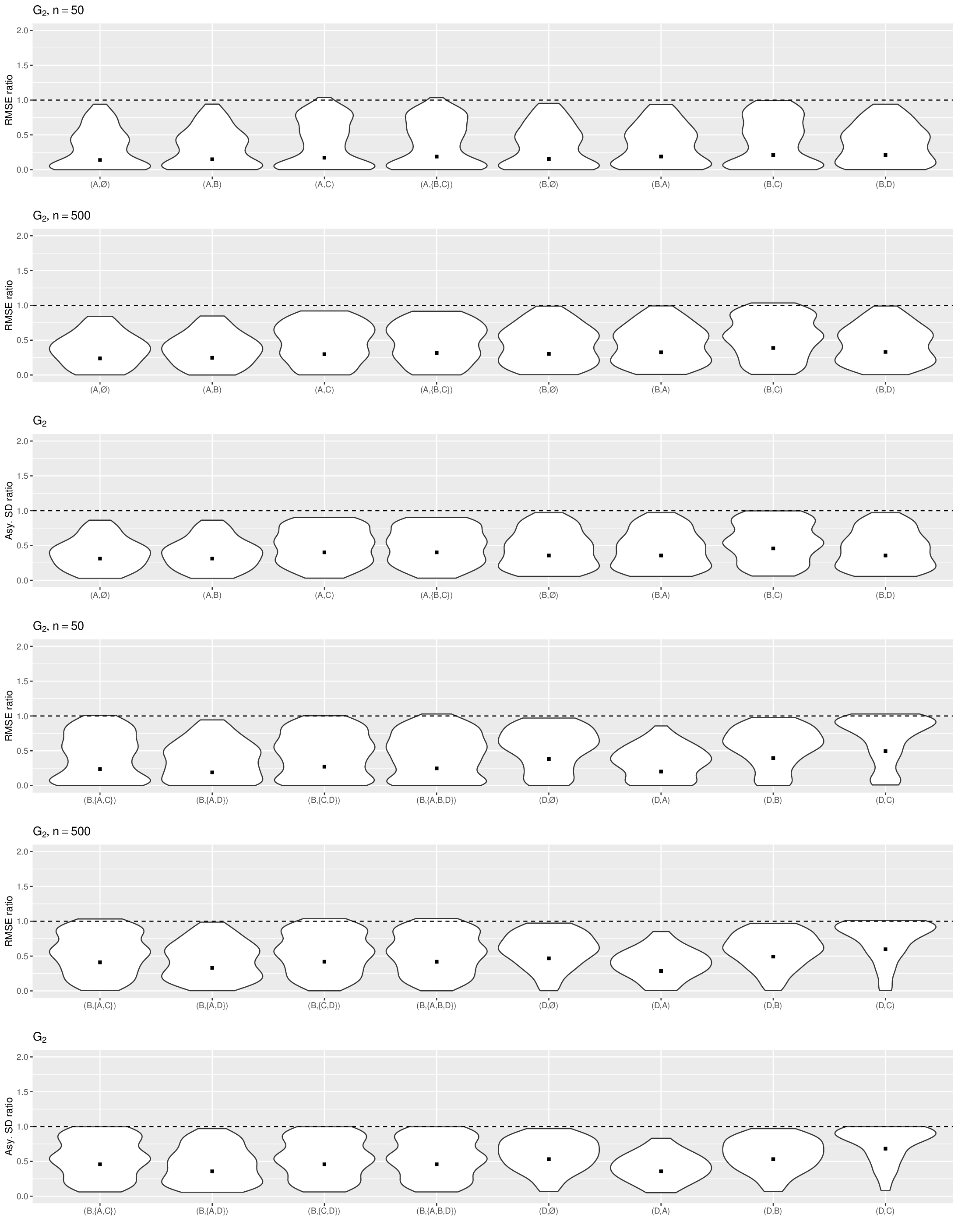}
 	\caption{Violin plot of the ratios of the root mean squared error, respectively the asymptotic standard deviation, for the two-stage least squares with $(Z^o,W^o)$ to the one for the tuple $(Z,W)$ given on the X-axis. The black dots mark the geometric mean of the ratios.}	
 	\label{fig:violin_long_2a_a}
 \end{figure}
 
  \begin{figure}[p]	
  \captionsetup{width=5cm}
  		\centering
 		\includegraphics[height=18cm, width=14cm]{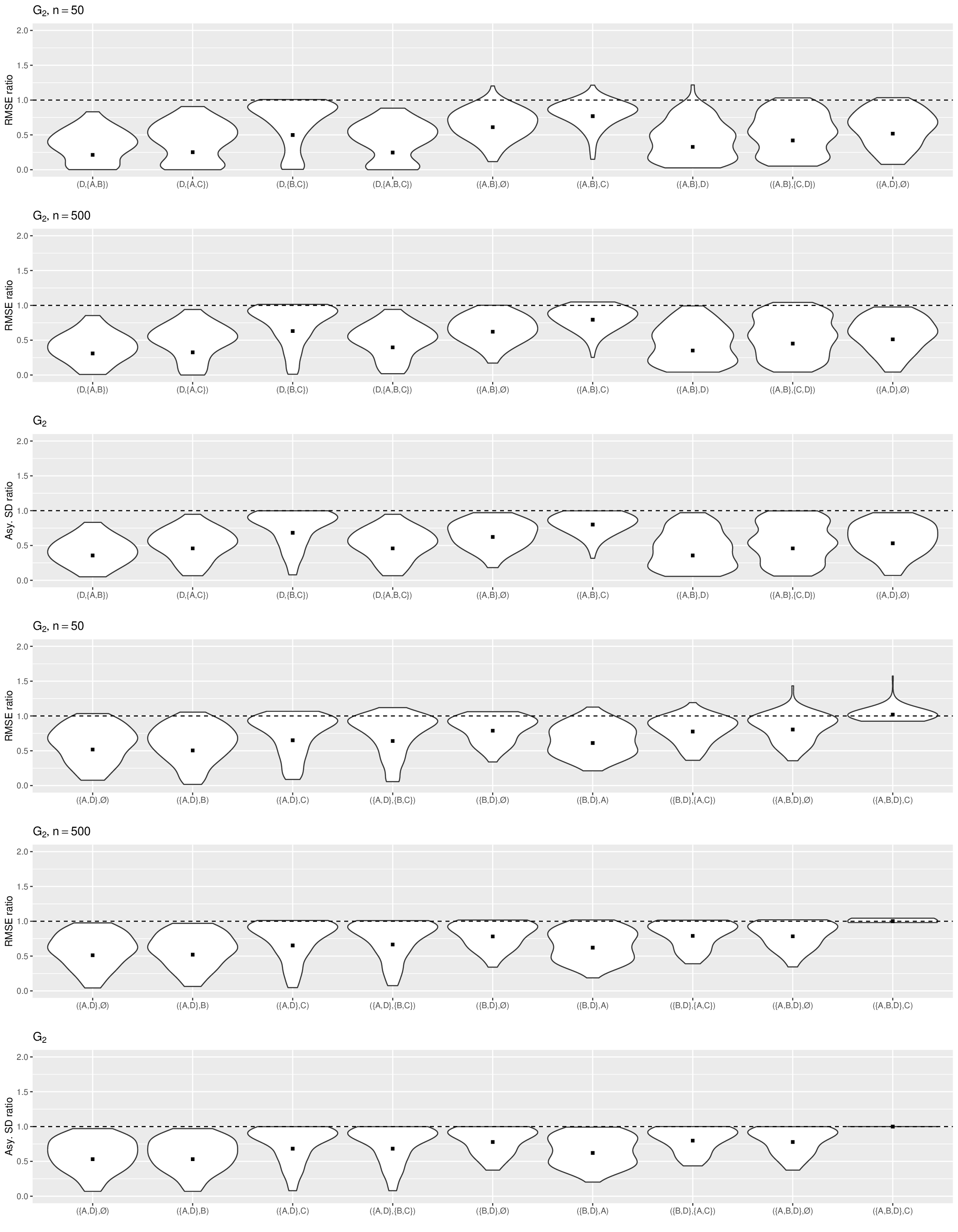}
 	\caption{Violin plot of the ratios of the root mean squared error, respectively the asymptotic standard deviation, for the two-stage least squares with $(Z^o,W^o)$ to the one for the tuple $(Z,W)$ given on the X-axis. The black dots mark the geometric mean of the ratios.}	
 	\label{fig:violin_long_2a_b}
 \end{figure}

  \begin{figure}[p]	
  \captionsetup{width=5cm}
  		\centering
 		\includegraphics[height=18cm, width=14cm]{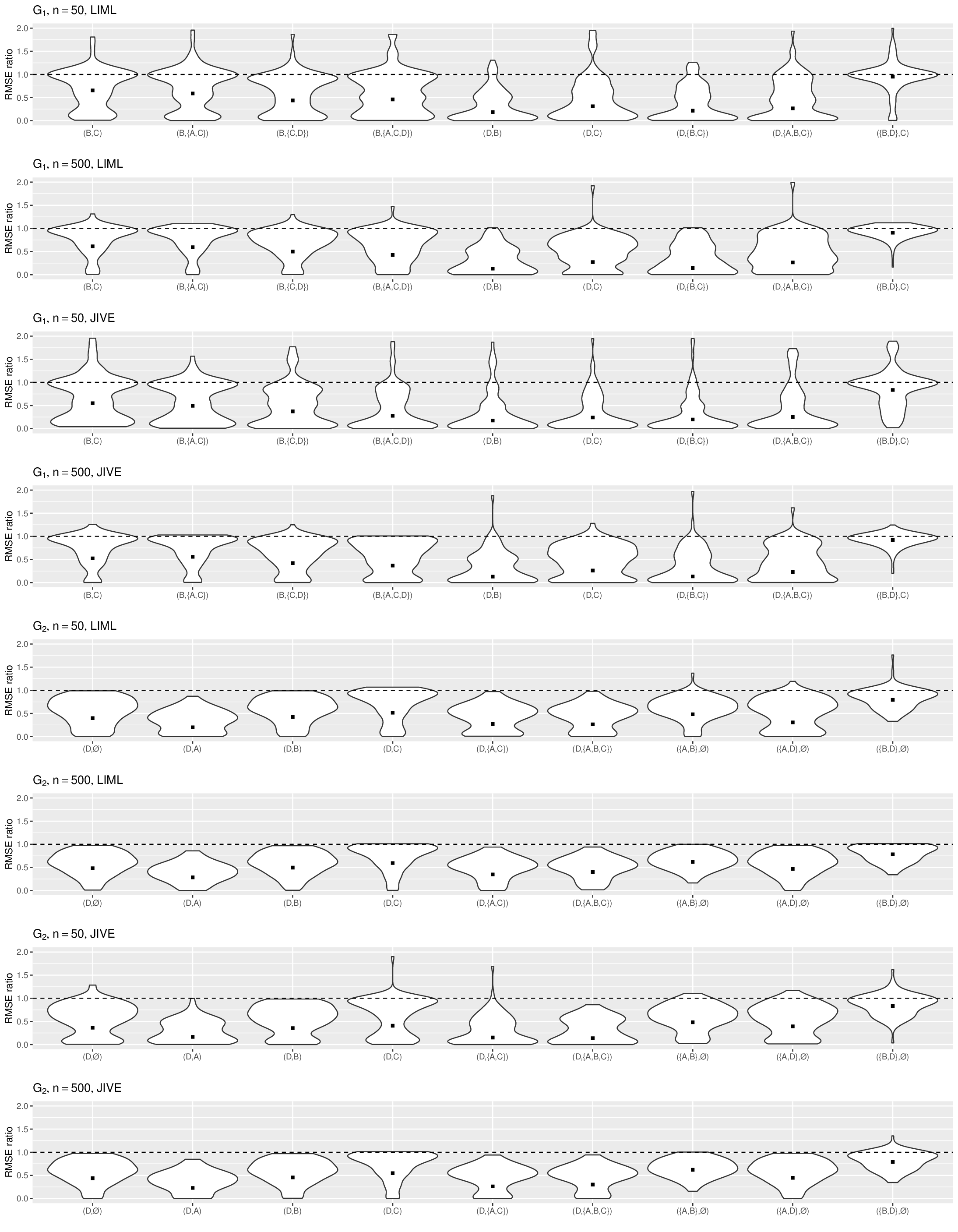}
 	\caption{Violin plot of the ratios of the root mean squared error, respectively the asymptotic standard deviation, with $(Z^o,W^o)$ to the one for the tuple $(Z,W)$ given on the X-axis. The black dots mark the geometric mean of the ratios.}	
 	\label{fig:liml jive main}
 \end{figure}

\end{document}